\documentclass[11pt]{amsart}

\usepackage{graphicx}

\usepackage[margin=1in,marginparwidth=0.8in, marginparsep=0.1in]{geometry}

\usepackage{amsfonts,amssymb,latexsym,amsmath}
\usepackage{times}
\usepackage{mathrsfs}

\usepackage[backref=page, %
bookmarks=true, bookmarksopen=true,%
bookmarksdepth=3,bookmarksopenlevel=2,%
colorlinks=true,%
linkcolor=blue,%
citecolor=blue,%
filecolor=blue,%
menucolor=blue,%
urlcolor=blue]{hyperref}


\makeatletter
\def\@tocline#1#2#3#4#5#6#7{\relax
  \ifnum #1>\c@tocdepth 
  \else
    \par \addpenalty\@secpenalty\addvspace{#2}%
    \begingroup \hyphenpenalty\@M
    \@ifempty{#4}{%
      \@tempdima\csname r@tocindent\number#1\endcsname\relax
    }{%
      \@tempdima#4\relax
    }%
    \parindent\z@ \leftskip#3\relax \advance\leftskip\@tempdima\relax
    \rightskip\@pnumwidth plus4em \parfillskip-\@pnumwidth
    #5\leavevmode\hskip-\@tempdima
      \ifcase #1
       \or\or \hskip 1em \or \hskip 2em \else \hskip 3em \fi%
      #6\nobreak\relax
    \dotfill\hbox to\@pnumwidth{\@tocpagenum{#7}}\par
    \nobreak
    \endgroup
  \fi}
\makeatother

\usepackage{amsfonts}
\usepackage[english]{babel}
\usepackage[leqno]{amsmath}
\usepackage{amssymb,amsthm}
\usepackage{amscd}
\usepackage{enumerate}
\usepackage{epsfig}
\usepackage{mathabx}

\usepackage{color}

\definecolor{darkgreen}{rgb}{0.0, 0.7, 0.0}
\definecolor{purple}{rgb}{0.5, 0.0, 0.5}
\definecolor{red}{rgb}{0.8, 0.2, 0.0}

\usepackage[matrix,arrow,tips,curve]{xy}

\input{xy}
\xyoption{all}





\renewcommand{\AA}{\mathbb{A}}

\newcommand{\CC}{\mathbb{C}}
\newcommand{\DD}{\mathbb{D}}
\newcommand{\EE}{\mathbb{E}}
\newcommand{\FF}{\mathbb{F}}

\newcommand{\LL}{\mathbb{L}}

\newcommand{\PP}{\mathbb{P}}
\newcommand{\QQ}{\mathbb{Q}}

\newcommand{\UU}{\mathbb{U}}
\newcommand{\VV}{\mathbb{V}}
\newcommand{\WW}{\mathbb{W}}

\newcommand{\ZZ}{\mathbb{Z}}

\newcommand{\cC}{\mathcal{C}}
\newcommand{\fF}{\mathcal{F}}

\newcommand{\oO}{\mathcal{O}}

\newcommand{\im}{\mathrm{Im}\,}

\newtheorem{lemma}{Lemma}[section]
\newtheorem{proposition}[lemma]{Proposition}
\newtheorem{conjecture}[lemma]{Conjecture}
\newtheorem{corollary}[lemma]{Corollary}
\newtheorem{theorem}[lemma]{Theorem}
\newtheorem{fact}[lemma]{Fact}

\theoremstyle{definition}

\newtheorem{definition}[lemma]{Definition}

\newtheorem{setup}[lemma]{Setup}

\newtheorem{notation}[lemma]{Notation}

\theoremstyle{remark} 

\newtheorem{example}[lemma]{Example}
\newtheorem{remark}[lemma]{Remark}

\numberwithin{equation}{section}

\newenvironment{sis}{\left\{\begin{aligned}}{\end{aligned}\right.}

\DeclareMathOperator{\Ext}{Ext}

\DeclareMathOperator{\Def}{Def}
\DeclareMathOperator{\reg}{reg}


\theoremstyle{definition}

\theoremstyle{plain}

\newcommand{\Z}{\mathbb{Z}}
\newcommand{\Q}{\mathbb{Q}}

\newcommand{\un}{\underline}
\newcommand{\ov}{\overline}
\newcommand{\wt}{\widetilde}
\newcommand{\wh}{\widehat}

\newcommand{\Spf}{\rm Spf}

\def \Im{\rm Im}
\def \Supp{\rm Supp}

\def \id{\rm id}

\def \pol{\underline{m}}

\def \PP{\mathbb{P}}

\def \Gm{{\mathbb G}_m}

\def \C{\mathcal C}

\def\I{\mathcal I}

\renewcommand{\O}{\mathcal O}

\def\M0{\mathcal M^0}

\def \Tr{{\rm Tr}}


\def \M{\mathcal M}



\def \Breg{B_{\mathrm{reg} }}


\newcommand{\End}{{\operatorname{End}}}

\newcommand{\Var}{{\operatorname{Var}}}
\newcommand{\Rep}{{\operatorname{Rep}}}

\DeclareMathOperator{\codim}{{codim}}
\DeclareMathOperator{\Fr}{{Fr}}



\DeclareMathOperator{\Hom}{{Hom}}

\newcommand{\bbN}{{\mathbb N}}
\newcommand{\bbC}{{\mathbb C}}


\newcommand{\cF}{{\mathcal F}}
\newcommand{\cG}{{\mathcal G}}
\newcommand{\cH}{{\mathcal H}}

\newcommand{\cM}{{\mathcal M}}

\newcommand{\cO}{{\mathcal O}}
\newcommand{\cP}{{\mathcal P}}

\newcommand{\cS}{{\mathcal S}}



\newcommand{\rD}{\mathrm{D}}
\newcommand{\rC}{\mathrm{C}}
\newcommand{\rE}{\mathrm{E}}
\newcommand{\rV}{\mathrm{V}}





\renewcommand{\OE}{\overset{\rightarrow}{\rE}}

\newcommand{\el}[1][e]{\overset{\leftarrow}{#1}}
\newcommand{\er}[1][e]{\overset{\rightarrow}{#1}}

\newcommand{\Qlbar}{\overline{\QQ}_\ell}

\DeclareMathOperator{\Syst}{{Syst}}

\DeclareMathOperator{\Gal}{{Gal}}
\DeclareMathOperator{\Pic}{{Pic}}
\DeclareMathOperator{\Eff}{{Eff}}
\DeclareMathOperator{\Fit}{{Fit}}

\title{A support theorem for Hilbert schemes of planar curves, II}

\author{Luca Migliorini}
\address{Luca Migliorini, 
Dipartimento di Matematica, 
Universit\`a di Bologna,
Piazza di Porta S. Donato 5,
40126 Bologna,  ITALY. 
}
\email{luca.migliorini@unibo.it}
\author{Vivek Shende}
\address{Vivek Shende, 
Dept. of Mathematics,
University of California, Berkeley, 
970 Evans Hall,	
Berkeley CA 94720, USA.}
\email{vivek@math.berkeley.edu}

\author{Filippo Viviani}
\address{Filippo Viviani,
Dipartimento di Matematica e Fisica 
Universit\`a Roma Tre 
Largo San Leonardo Murialdo  
I-00146 Roma  Italy }
\email{viviani@mat.uniroma3.it}

\begin{document}

\begin{abstract}
We study the cohomology of Jacobians and Hilbert schemes 
of points on reduced and locally planar curves, which are
however allowed to be singular and reducible.  We show that the cohomologies of all Hilbert
schemes of all subcurves are encoded in the cohomologies of the fine compactified Jacobians of connected subcurves, via the perverse Leray filtration. 
\end{abstract}

\maketitle

\tableofcontents

\newpage

\section{Introduction}

Given an effective divisor $D$ on a nonsingular algebraic variety $\rC$, one can form the associated line bundle $ \oO_C(D)$,
thus defining a map from the space of effective divisors to the space of line bundles
\begin{eqnarray*} A: \Eff(\rC)=\coprod_{n\geq 0} \rC^{(n)}  & \to & \Pic(\rC) \\
D & \mapsto & \oO_C(D).
\end{eqnarray*}

For singular spaces, various changes must be made.  The spaces $\Eff(\rC)$
and $\Pic(\rC)$ still make sense, but the map does not.  Two problems
can already be seen when $\rC$ is a nodal curve:  the 
sheaf of functions with one pole at the node is not a line bundle, and the sheaf of functions
with double pole at the node has degree 3.

When $\rC$ is proper, reduced, and 
irreducible, there are natural substitutes \cite{DS, AIK, AK, AK2}.  
 The space of line bundles is extended to the space $\overline{\Pic}(\rC)$ of rank one, 
torsion free sheaves.  
The space of divisors is replaced by a space $\Syst(\rC)$ of generalized divisors -- 
rank one, torsion free sheaves equipped with injective sections.  There is an evident 
forgetful map $\Syst(\rC) \to \overline{\Pic}(\rC)$. 

When $\rC$ is proper of dimension 1 and locally planar, e.g. it lies on a smooth surface,these spaces behave in many ways like
their classical counterparts, $\overline{\Pic}(\rC)$ is
reduced and irreducible of dimension equal to the arithmetic genus of $\rC$, the space $\Syst(\rC)$
can be identified with the Hilbert scheme, and the above forgetful map is identified with the map 
sending a subscheme to the dual of its ideal sheaf 
\begin{eqnarray*} A: \coprod_{n\geq 0} \rC^{[n]} & \to & \overline{\Pic}(\rC) \\
D & \mapsto & \Hom_C(I_D, \oO_C).
\end{eqnarray*}

Reducibility introduces additional subtleties. 
Consider the curve consisting of two rational curves glued together at two points.  
The space of line bundles on this curve is $\ZZ \times \ZZ$ copies of $\mathbb{G}_m$, where the discrete
data gives the degrees of the line bundle on each component.  The ability to ``take the $(0,0)$ piece''
is lost in the compactification -- the torsion free sheaves coming from the nodes
serve to glue together the various components of degree $(a, d-a)$.  

The problem can be bounded by an appropriate choice of stability condition \cite{Gie, Ses, Sim}.
For locally planar curves, it is known that a generic choice leads to a fine moduli space, called a fine compactified Jacobian \cite{est1, MV, MRV1},
and moreover, that both its derived category \cite{MRV3} and the topological cohomology (see Theorem  \ref{thm:jac}) of the space 
do not depend on the choice of stability condition.  These naturally furnish invariants of the singular
curve; we will be interested here in investigating the latter. 

\vspace{2mm}

We begin with a nodal curve $\rC$. For simplicity in this introduction we assume all varieties are defined over the complex field. 
We write $\overline{J}_\rC$ 
for the fine compactified Jacobian determined by a fixed but unspecified generic stability condition.  
In the introduction, we restrict ourselves to the case where all components of $\rC$ are rational; 
for topological purposes, the general case differs from this only by the product of the Jacobians
of the components.  We write $\Gamma_\rC$ for the graph whose vertices are the
irreducible components of $\rC$ and whose edges are the nodes joining them. 

The space $\overline{J}_\rC$ 
is a union of toric varieties glued along toric divisors, by combinatorial rules which can
be given in terms of $\Gamma_\rC$ \cite{OS, A, MV}.  In particular,  
the zero dimensional torus orbits are in bijection with spanning trees of $\Gamma$.  
In terms of curves, a spanning tree is a connected  partial normalization of arithmetic
genus zero.  
That is: 
$$
\chi(\overline{J}_\rC) = \#\{\mbox{genus zero connected partial normalizations of a nodal curve $\rC$}\} $$
We will write this number as $n_0(\Gamma)$. 

A version of the above equality for irreducible curves was 
used by Yau, Zaslow, and Beauville to count curves on K3 surfaces \cite{YZ, Bea}.  
It has a certain physical meaning, further elaborated by Gopakumar and Vafa -- the right hand 
side has to do with topological string theory, and the left hand side has to do with BPS D-branes;
both are degenerations of some M-theoretic setup, so should be equal \cite{GV}.  They also 
explained that this reasoning explains how to generalize this formula to higher genus, by promoting the right hand side to the number $n_g(\Gamma)$ 
of genus $g$ connected spanning subgraphs of $\Gamma$, or equivalently,
the number of genus $g$ connected partial normalizations of the corresponding curve.


There are two ways to generalize the left hand side.  The first speaks only of the Jacobian, 
but introduces a filtration on its cohomology.  
Let $P^i H^*(\overline{J}_\rC, \QQ)$ be the local perverse Leray filtration, as defined in \cite{MS, MY}, on the cohomology
of the Jacobian, coming from spreading out over any versal deformation of $\rC$. 
Let $\LL=\QQ(-1)$ be the class of the affine line. 

\begin{theorem} \label{thm:nodaljac}
Let $C$ be a connected nodal curve over $\CC$ with rational components, and let $\Gamma$ be its   dual graph. 
Then we have the following equality in the Grothendieck group of Hodge structures: \begin{equation}
\sum_n q^n Gr_P^n H^*(\overline{J}_\rC, \QQ) = \sum_h n_h(\Gamma) \cdot (q\LL)^{g(\Gamma)-h} ((1-q)(1-q \LL))^{h} 
\end{equation}

\end{theorem}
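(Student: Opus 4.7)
The plan is to compute the perverse-graded Euler class $\sum_n q^n Gr_P^n H^*(\overline J_\rC, \QQ)$ by applying the decomposition theorem to $R\pi_*\QQ$ for the relative compactified Jacobian $\pi\colon \overline\cJ \to B$ over a locally versal deformation $\cC \to B$ of $\rC$. By construction, the local perverse Leray filtration is obtained by restricting the perverse decomposition of $R\pi_*\QQ_{\overline\cJ}[\dim \overline\cJ]$ to the closed fiber; its graded pieces are therefore controlled by the perverse summands of this pushforward on $B$.

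\emph{Step 1: support theorem and shape of the decomposition.} The versions of Ng\^o's support theorem for locally planar curves, established for irreducible curves in \cite{MS, MY} and extended to the reducible setting of the present series, imply that every perverse summand of $R\pi_*\QQ$ has full support in the smooth base $B$. Consequently, the decomposition reads $\bigoplus_\alpha L_\alpha[\dim B]$ for local systems $L_\alpha$ on $B$; the perverse Leray filtration on $H^*(\overline J_\rC)$ is then determined by the stalks of the $L_\alpha$ at the closed point together with the perverse shift of each summand.

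\emph{Step 2: indexing the summands by connected spanning subgraphs.} I would stratify $\overline J_\rC$ by subsets $S \subseteq E(\Gamma)$ according to the locus where parametrized sheaves fail to be locally free; pushforward along the partial normalization $\nu^S\colon \rC^S \to \rC$ identifies each stratum with a union of torus pieces $\Gm^{b_1(\Gamma \setminus S)}$ of $\Pic(\rC^S)$, nonempty exactly when $\rC^S$ is connected. Using the toric description of $\overline J_\rC$ recalled in the introduction, one then establishes a natural bijection between the summands $L_\alpha$ and the connected spanning subgraphs $H \subseteq \Gamma$, writing $L_H$ for the summand attached to $H$ of genus $h = b_1(H)$; intuitively $L_H$ records the contribution of the partial normalization $\rC^{E(\Gamma)\setminus E(H)}$ together with the perverse shift associated to the $g(\Gamma) - h$ resolved nodes.

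\emph{Step 3: local perverse contributions.} The final ingredient is the perverse class of each $L_H$. This is to be extracted from two universal local computations, which the support theorem justifies performing once and for all: (a) the contribution of a single torus fiber $\Gm$ of the generalized Jacobian, equal to $(1-q)(1-q\LL)$; and (b) the contribution of a single resolved node of the ambient curve, equal to $q\LL$. Multiplying over the factors present at a stratum indexed by $H$ yields
\[\sum_n q^n Gr_P^n L_H\big|_{\text{closed point}} \;=\; (q\LL)^{g(\Gamma)-h}\,((1-q)(1-q\LL))^h,\]
and summing over all connected spanning subgraphs, grouped by genus, produces the formula of the theorem.

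The principal obstacle is Step 3: identifying the perverse class of each summand in terms of the universal local factors $(1-q)(1-q\LL)$ and $q\LL$. At its heart are the explicit computation of the perverse filtration induced on the cohomology of a $\Gm$-factor of the generalized Jacobian inside the versal family, and the explicit computation of the perverse filtration at a single one-parameter nodal degeneration of compactified Jacobians; these have to be carried out in the universal local models and then glued etale-locally across the versal base using the product structure of the versal deformation. A secondary difficulty lies in Step 2, namely the precise combinatorial matching between the summands $L_\alpha$ and the connected spanning subgraphs of $\Gamma$, which depends on the toric-gluing description of $\overline J_\rC$ and on the full-support conclusion of the support theorem to rule out additional summands supported on proper subvarieties of $B$.
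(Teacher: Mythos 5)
Your Step 1 is the same starting point as the paper: Theorem \ref{thm:jac} reduces $Gr_P^\bullet H^*(\overline{J}_\rC)$ to the stalks at $[\rC]$ of the summands of $R\pi^J_*\QQ$ over a versal (H-smooth) family. But from there your Steps 2--3 contain a genuine gap. The decomposition theorem does \emph{not} produce summands indexed by connected spanning subgraphs of $\Gamma$, and in particular not local systems $L_\alpha$ on $B$: the full-support conclusion says the summands are the intersection complexes $IC(\bigwedge^i R^1\pi_{sm*}\QQ)[-i]$, indexed only by the cohomological degree $i$ of a local system living on the open locus $B_{\mathrm{reg}}$ of smooth curves. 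The subgraph combinatorics is not a decomposition of the sheaf; it emerges only inside the computation of the \emph{stalk} of each $IC(\bigwedge^i R^1\pi_{sm*}\QQ)$ at the deep point $[\rC]$, and that stalk computation is precisely the missing step. The stratification of the fiber $\overline{J}_\rC$ by non-locally-free loci (your Step 2) only yields the class in the Grothendieck group of varieties, i.e.\ the weight polynomial $(1+t)^{2g(\rC^\nu)}t^{2h^1(\Gamma)}c(\Gamma)$ (Proposition \ref{prop:motiveofjbar}, Corollary \ref{C:weightJ}); since $\overline{J}_\rC$ is not equivariantly formal and has odd cohomology, one cannot upgrade a stratification of the fiber to a perverse-graded statement.

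The proposed mechanism in Step 3 --- multiply a universal factor $(1-q)(1-q\LL)$ per $\Gm$ and $q\LL$ per resolved node, glued via the local product structure of the versal deformation --- would fail even if carried out, because the compactified Jacobian (unlike the Hilbert scheme, which localizes via Hilbert--Chow) does not factor over the nodes of $\rC$. A naive product of per-node factors gives $\sum_{I\subseteq \rE}(q\LL)^{|I|}((1-q)(1-q\LL))^{|\rE|-|I|}$ up to global factors, i.e.\ it reproduces the Hilbert-scheme-type formula of Proposition \ref{prop:motiveofnodalhilb}, summed over \emph{all} subsets of edges; the Jacobian formula instead runs only over $I\in\mathscr{C}(\Gamma)$, i.e.\ over complements of connected spanning subgraphs, and producing that connectedness constraint is exactly the nontrivial content. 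In the paper this is done by computing the IC stalks with the Cattani--Kaplan--Schmid complex: the key input is Lemma \ref{L:main}, which shows that the image of the composite nilpotent operator $N_I^{(i)}$ vanishes unless $I\in\mathscr{C}(\Gamma)$ and identifies it with $\wedge e_I^*$ applied to $\bigwedge^{i-|I|}$ of the graph (co)homology of $\Gamma\setminus I$; Theorem \ref{thm:nodalICcalc} and Corollary \ref{cor:weightpolyic} then assemble this into the right-hand side of the theorem, which combined with Theorem \ref{thm:jac} gives the statement. Your proposal names the correct answer per subgraph but supplies no argument that would force the restriction to connected subgraphs or compute the perverse grading; some substitute for the CKS-complex calculation (or an equivalent analysis of the nearby/vanishing cycle operators at the nodal point of the versal base) is indispensable.
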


In fact, the original Gopakumar-Vafa prediction spoke only of the specialization $\LL=1$; we are
giving a refined version.  This result follows from
Corollary \ref{cor:weightpolyic} combined with Theorem \ref{thm:jac} .
\vspace{2mm}

The second generalization of $\chi(\overline{J}_\rC)$ introduces new spaces instead of a
cohomological filtration.  In general, these spaces should be the $\Syst(\rC)$  above, or as 
Pandharipande and Thomas call them, $\mathrm{Pairs}(\rC)$ \cite{PT}.  When $\rC$ is Gorenstein, 
and in particular in the locally planar case to which we confine ourselves here,
these are isomorphic to the Hilbert schemes.  Unlike the Jacobians, the enumerative information
contained in these spaces is most naturally related to counting disconnected curves; the two are
conjecturally related by an exponential.  The pairs spaces were introduced to study
enumerative geometry on 3-folds \cite{PT, PT3}; but more relevant to our present work on locally 
planar curves are their uses in studying curves on surfaces \cite{S, KST, KT, KS, GS, GS2}, 
knot invariants \cite{ObS, ORS, GORS, DSV, DHS, M}, and the geometry
of the Hitchin system \cite{CDP}. 

We introduce some notation.  Form the group ring $\ZZ[[\ZZ^{vertices}]]$, i.e. 
the power series ring $\ZZ[[Q^{v_1}, Q^{v_2}, \ldots]]$ on the vertices of the graph.
This is where curve counting really happens, but as we count only reduced curves, 
we pass to the quotient by the ideal $(Q^{2v_1}, Q^{2v_2}, \ldots)$.  
On this quotient ring, we define an exponential 
\begin{eqnarray*}
\EE xp: (Q^{v_1}, Q^{v_2}, \ldots)/(Q^{2v_1}, Q^{2v_2}, \ldots)  & \to &
\ZZ[[Q^{v_1}, Q^{v_2}, \ldots]] / (Q^{2v_1}, Q^{2v_2}, \ldots) \\
\end{eqnarray*}
by sending $\EE xp(Q^v) = 1+ Q^v$, and requiring that sums go to products. 

For any subgraph $\Gamma' < \Gamma$, let $Q^{\Gamma'} := 
\prod_{v \in \Gamma'} Q^v$.  The Hilbert scheme
version of the formula is:

\begin{theorem} \label{thm:nodalhilb} 
Let $C$ be a connected nodal curve with rational components, with dual graph $\Gamma$. 
Then we have the following equality in the Grothendieck group of Hodge structures: 
\begin{equation}\label{eq:nodalhilb}
\sum_{\Gamma' < \Gamma} Q^{\Gamma'} (q\LL)^{1-g(\Gamma')}
\sum_{n=0}^\infty q^n H^*(C_{\Gamma'}^{[n]}, \Q)
= 
\EE xp \left(\sum_{\Gamma' < \Gamma} Q^{\Gamma'}
\sum_h n_h(\Gamma') \cdot \left(\frac{q\LL}{(1-q)(1-q\LL)} \right)^{1-h} 
\right) 
\end{equation}

Recall that, by definition, $n_h(\Gamma')$ vanishes when $\Gamma'$ is disconnected. 
\end{theorem}

Since the left hand side is computing cohomology of Hilbert schemes and the right hand side
is counting maps to the curve, this result is a sort of local motivic MNOP formula for 
maps to reduced nodal curves \cite{MNOP}.\footnote{The usual context of such formulas
is the counting of curves in 3-dimensional Calabi-Yau varieties, 
in which case the stable pairs moduli space is generally
singular and the Euler characteristics and cohomologies discussed here 
must be corrected by the Behrend
function \cite{B} or its cohomological upgrade.  The formulas here will apply to this 3-fold setting
only in the case that the moduli space is smooth, which however can happen, e.g. when the 3-fold
contains an isolated surface.} 

The result with $\Qlbar$ coefficients can be deduced by combining
Theorem \ref{thm:nodaljac} with Corollary \ref{Corlocal}. The result as stated follows by observing that
the mixed Hodge structures in  Equation \ref{eq:nodalhilb} are of Hodge-Tate type.


\begin{remark}
We do not know a formula for the Betti numbers of $\overline{J}_\rC$.
Finding such is nontrivial: while the space is built of toric varieties and carries
the action of a torus with finitely many fixed points, the cohomology is not equivariantly formal
-- in particular, there is cohomology in odd degrees.  
\end{remark}

\vspace{2mm}

We turn now to the more general setting of reduced planar curves.  Here, the $n_h(\rC)$ are more
mysterious.  The closest statement we know to a combinatorial interpretation operates
only at the level of Euler characteristics, and asserts that $\chi(n_h(\rC))$ is 
multiplicity of the loci of genus $h$ in a versal deformation of $\rC$ \cite{S}.  A conjectural
description of the refined invariants in terms of a real structure on the curve can be found
in \cite{GS}, where we also gave formulas in the case where
$\rC$ is a curve with an ADE singularity \cite{GS}.  From these it can be seen that 
$n_h(\rC)$ is a nontrivial Hodge structure, although
we know of no example in which it is not a polynomial in $\LL$.  

Nonetheless, we can at least ask for a relation between the analogues
of the left hand sides of Theorems \ref{thm:nodaljac} and \ref{thm:nodalhilb}.  

In the case of a single smooth curve $C$,  
the cohomologies of the Hilbert schemes $C^{[n]}$ -- in this case, just the symmetric products -- 
and the Jacobian $J(C)$ 
can both be built from $H^1(C, \QQ)$.  Explicitly:

$$\bigoplus_{n=0}^\infty q^n \cdot H^*(C^{[n]},  \QQ) =
\frac{\bigoplus q^i \cdot \bigwedge^i H^1(C,\QQ) [-i]}{(1-q)(1-q \LL)} = \frac{\bigoplus q^i H^i(J(C), \QQ) [-i]}{(1-q)(1-q \LL)}, $$
where $\LL := [-2](-1)$. 

The formula works in families: given a smooth family of curves $\pi_{sm}: \cC \to B_{sm}$, we have that

$$\bigoplus_{n=0}^\infty q^n \cdot R \pi_{sm*}^{[n]} \QQ = \frac{\bigoplus q^i \cdot \bigwedge^i R^1 \pi_{sm*} \QQ [-i]}{(1-q)(1-q \LL)} = \frac{\bigoplus q^i \cdot R^i \pi^J_{sm*} \QQ [-i]}{(1-q)(1-q \LL)}. $$

Now consider a family  $\pi_{\heartsuit}: \cC \to B_{\heartsuit}$ of reduced, {\em irreducible }
locally planar curves.   We can
form the relative Hilbert scheme $\pi^{[n]}_{\heartsuit}: \cC^{[n]} \to B_{\heartsuit}$, 
and the relative compactified 
Jacobian $\pi^J_{\heartsuit}: \overline{J}_\cC \to B_{\heartsuit}$.  
If all the relative Hilbert schemes 
have nonsingular total space, then the same is true for 
the relative compactified Jacobian.  
In \cite{MY, MS}, the families of cohomologies $R \pi^{[n]}_{\heartsuit*} \QQ$ and $R\pi^J_{\heartsuit*} \QQ$ were shown to enjoy the following relation:

$$\bigoplus_{n=0}^\infty q^n \cdot R \pi_{\heartsuit*}^{[n]} \QQ \cong
\frac{\bigoplus q^i \cdot IC(\bigwedge^i R^1 \pi_{sm*} \QQ) [-i]}{(1-q)(1-q \LL)} = \frac{\bigoplus q^i \cdot {}^p\!R^i \pi^J_{\heartsuit*} \QQ [-i]}{(1-q)(1-q \LL)}.$$

Here, $IC$ denotes the intersection cohomology sheaf extending the given local system and ${}^p\! R^i f_* := {}^p \mathcal{H}^i(Rf_*)$ means the $i$'th perverse
cohomology sheaf of the derived pushforward.   We take the convention that intersection cohomology complexes `begin in degree zero', so $K$ is perverse in our sense if $K[\dim B] $ is perverse in the sense of \cite{BBD}, see \S  \ref{sec:CKS_preliminaries} .

We recall a few ideas from the proof. 
It follows from the ``decomposition theorem'' of \cite{BBD} that the middle term above is a direct summand both on the right and the left, and any other summands
must have positive codimensional support, so it remains only
to show that there are no such summands.  On the RHS, hence on the LHS for $n \gg 0$ via the Abel-Jacobi map,
this is a consequence of the `support theorem' of \cite{N1}.  In \cite{MY}, this is bootstrapped to an argument
for the LHS by constructing correspondences between the Hilbert schemes.  In \cite{MS}, we take a different approach, suitable for both the LHS and RHS,
to reduce checking to the nodal locus, where it may be done explicitly.  We have since abstracted this method
into the theory of higher discriminants \cite{MS2}.  Yet another approach to similar results can be found in \cite{R}.

Our present goal is to establish such a comparison over the locus of reduced curves -- i.e., to treat the reduced but not necessarily irreducible case.  
As we already mentioned, there are already subtleties in the definition of the compactified Jacobian,
but so long as the curves lie in a fixed surface or fixed family of surfaces or we are working \'etale locally over the base, we can choose compatible stability conditions over the whole base and consider the
relative fine compactified Jacobian $\pi^J: \overline{J}_\C \to B$ (see Theorem \ref{T:fam-Jac}).
Second, due to the above stability issues, there is no Abel-Jacobi map directly relating the Hilbert
schemes and the Jacobians.
Third, it is no longer true in general that smoothness of $\overline{J}_\C$ guarantees the absence
of summands of $R\pi^J_* \QQ$ with positive codimensional supports. 

\begin{example}
Consider a one-parameter family of elliptic curves degenerating to a cycle of $\ge 2$ $\PP^1$'s.  
This family is its own relative fine compactified Jacobian \cite[Prop. 7.3]{MRV1},
but evidently $R\pi^J_* \QQ$ has
a summand supported at the special point to account for its extra $H^2$.  
\end{example}

Nonetheless, over sufficiently big families, this phenomenon does not occur.  

\begin{definition}\label{def:Hsmooth}
We say $\pi: \cC \to B$ is {\em H-smooth} if all relative Hilbert schemes
$\cC^{[n]}$ have smooth total space.  Note this includes $\cC^{[0]} = B$. 
\end{definition}

\begin{example}
Over any field, a versal family of reduced curves with locally planar singularities is H-smooth, see \S \ref{section:compjac_nvf}
for the general discussion of the condition of H-smoothness, based on the results in \cite{S}. 
\end{example}

\begin{theorem} \label{thm:jac}
Let  $\pi: \cC \to B$ be H-smooth.  Then no summand of
$R\pi^J_* \QQ$ has positive codimensional support. 
Thus, 
${}^p\!R^i \pi_{*}^J \QQ \cong IC(\bigwedge^i R^1 \pi_{ sm *} \QQ)$, and 
the stalk at $[\rC]$ of ${}^p R^i \pi_{*}^J \QQ$ does not depend
on the choice of the H-smooth family $\cC$.  
\end{theorem}


In some cases, this follows from the work of Chaudouard and Laumon \cite{CL}.
To prove the result, we use the method of higher discriminants \cite{MS2}, plus the following 
smoothness criterion, to reduce the result to the case of irreducible curves, where it is known \cite{MS}. 

\begin{theorem}\label{nonsing_rel_compjac_fd}
Let $\pi: (\cC, \rC) \to (S,b)$ be a projective flat family of connected locally planar curves, 
with distinguished special fibre $\rC=\cC_b$.  
Let $k^{\rm loc}_{\pi, b}: T_b S \to T \Def^{\rm loc}(\rC)$ be the induced map to the 
first-order deformation of the singularities of $\rC$.  
Let $\gamma(\rC)$ be the number of connected components of $\rC$, and $\delta(\rC)$ its cogenus. 

If $\,\Im(k^{\rm loc}_{\pi, b})$ is a generic subspace of $T \Def^{\rm loc}(\rC)$ of dimension at least $\delta(\rC) + 1 - \gamma(\rC)$, 
then the relative compactified Jacobian $\ov{J}_{\cC}$  is regular along the special fibre 
$\ov{J}_{\rC}$.
\end{theorem}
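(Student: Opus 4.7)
The plan is to characterise regularity of $\ov{J}_{\cC}$ at a point $F$ of the special fibre $\ov{J}_{\rC}$ in terms of a surjectivity statement, and then to verify this surjectivity at every $F$ using the hypothesis on $\Im(d\sigma)$. Standard deformation theory of sheaves in families yields the left-exact sequence
\[
0\to\Ext^1_{\rC}(F,F)\to T_F\ov{J}_{\cC}\to T_oS\xrightarrow{\mathrm{obs}_F}\Ext^2_{\rC}(F,F),
\]
in which $\mathrm{obs}_F$ is the obstruction to lifting $F$ along a first-order deformation of $\rC$. Since this obstruction is local at the non-locally-free support of $F$, it factors as $T_oS\xrightarrow{d\sigma}\VV\xrightarrow{\kappa_F}\Ext^2_{\rC}(F,F)$ through a universal map $\kappa_F$. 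Using $\chi(F,F)=1-p_a(\rC)$ together with $\Hom_{\rC}(F,F)=k$ to write $\dim\Ext^1_{\rC}(F,F)=p_a(\rC)+\dim\Ext^2_{\rC}(F,F)$, and the flatness of $\ov{J}_{\cC}\to S$ (so $\dim\ov{J}_{\cC}=\dim S+p_a(\rC)$), a direct dimension count in the displayed sequence shows: $\ov{J}_{\cC}$ is regular at $F$ if and only if the composition $\Im(d\sigma)\hookrightarrow\VV\xrightarrow{\kappa_F}\Ext^2_{\rC}(F,F)$ is surjective.

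\textbf{Two key inputs.} With this criterion in place, the argument requires two further ingredients. The first is that $\kappa_F:\VV\to\Ext^2_{\rC}(F,F)$ is itself surjective for every $F$; I would prove this by applying the tangent-space sequence above to a global versal deformation of $\rC$ (which is $H$-smooth, hence has smooth relative compactified Jacobian) to deduce surjectivity of the analogous obstruction map out of $T_0 B^{\mathrm{vers}}$, and then observing that this map factors through $\VV$. The second, and main technical point, is the bound
\[
\dim\Ext^2_{\rC}(F,F)\;\le\;\delta(\rC)+1-\gamma(\rC)
\]
for every rank-one torsion-free sheaf $F$ on $\rC$. Here the local-to-global spectral sequence $H^p(\SExt^q(F,F))\Rightarrow\Ext^{p+q}(F,F)$ reduces the task to bounding (i) $\dim H^1(\SHom(F,F))$ in terms of the number of connected components $\gamma(\rC)$, and (ii) the stalks of $\SExt^2(F,F)$ at the non-locally-free points of $F$, each estimated via local Gorenstein duality and the structure of maximal Cohen-Macaulay modules at a planar singularity, with the total contribution across singularities bounded by $\delta(\rC)$.

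\textbf{Genericity and conclusion.} Finally, the genericity hypothesis closes the argument. For each $F$ the kernel $\ker(\kappa_F)\subset\VV$ has codimension $\dim\Ext^2_{\rC}(F,F)\le\delta(\rC)+1-\gamma(\rC)$, and since $F\mapsto\dim\Ext^2_{\rC}(F,F)$ is upper semicontinuous on $\ov{J}_{\rC}$, the target decomposes into finitely many locally closed strata on which these kernels form a coherent family. The condition on a subspace $W\subset\VV$ of dimension $\delta(\rC)+1-\gamma(\rC)$ that $W+\ker(\kappa_F)=\VV$ for every $F$ is then a non-empty open condition on the Grassmannian, and is satisfied in particular by any generic $\Im(d\sigma)$ of that dimension. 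By the criterion of the first paragraph, $\ov{J}_{\cC}$ is then regular along $\ov{J}_{\rC}$. The hard part will be the $\Ext^2$-bound above; the rest is formal deformation theory together with Grassmannian transversality.
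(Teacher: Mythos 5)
Your outline has a genuine gap at its central claim: the bound $\dim\Ext^2_{\rC}(F,F)\le\delta(\rC)+1-\gamma(\rC)$ is false for arbitrary rank-one torsion-free sheaves, and nothing in your sketch uses the one hypothesis that makes the theorem true, namely that sheaves in a \emph{fine} compactified Jacobian are stable, hence simple. The paper's own motivating example shows the failure: for the banana curve ($\delta=2$, $\gamma=2$, so $\delta(\rC)+1-\gamma(\rC)=1$), the pushforward $\I=\nu_*L$ of a line bundle from the (disconnected) normalization has obstruction space of dimension $2$ — its locus $W(\I)$ is the class of the conductor, of codimension $\delta=2$ — and such a sheaf is excluded from $\ov{J}_{\rC}$ only by the stability condition. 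Any correct proof must convert simplicity of $\I$ (equivalently, connectedness of the partial normalization $\rC^{\I}$ on which $\I$ is a line bundle) into the improved codimension bound; this is exactly where the paper's work lies: the FGvS criterion that $\ov{J}_{\cC}$ is regular at $\I$ iff $W(\I)+\Im(d\sigma)=T\VV$, with $W(\I)$ identified with the class of the first Fitting ideal $\Fit_1(\I)$; Warmt's local duality $\Hom_R(\End_R(M),R)\cong\Fit_1(M)$, which gives $\Fit_1(\I)\supseteq\Fit_1$ of the seminormal ring $R_{\nu(\I)}$ attached to the branch-partition of $\End(\I)$; and a hypergraph connectivity count showing that the loci $\VV^{\un{\nu}}$ with $\rC^{\un{\nu}}$ connected and maximal have codimension exactly $\delta(\rC)+1-\gamma(\rC)$. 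Your sketch of the bound (``total contribution across singularities bounded by $\delta(\rC)$'' plus a term controlled by $\gamma$) never sees this mechanism and, as stated, would at best reprove the Hilbert-scheme-type bound by $\delta(\rC)$, which is strictly weaker than what the theorem asserts.

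There are also technical problems with the formal framework you chose. On a planar singular curve the local $\SExt^q(F,F)$ are $2$-periodic and nonvanishing in all degrees $q\ge 1$ when $F$ is not locally free, so $\chi(F,F)$ is not $1-p_a(\rC)$ in any naive sense and your dimension count $\dim\Ext^1=p_a+\dim\Ext^2$ does not hold as written; moreover the local-to-global pieces relevant to $\Ext^2$ are $H^1(\SExt^1(F,F))$ and $H^0(\SExt^2(F,F))$, not $H^1(\SHom(F,F))$, which feeds into $\Ext^1$. This is precisely why the paper works not with $\Ext^2_{\rC}(F,F)$ but with the semiuniversal deformation spaces of the pairs $(\wh{\O}_{\rC,c_i},\I_i)$ and the image $W(\I)$ of the forgetful map, for which the Fitting-ideal description is available. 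The genericity step at the end of your argument (finitely many strata, transversality is open and nonempty on the Grassmannian) is fine and parallels the paper's reduction to transversality with the finitely many linear components of $\WW$; but without the simplicity input and with the obstruction-theoretic bookkeeping repaired, the core of the proof is missing.
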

A more precise version of Theorem \ref{nonsing_rel_compjac_fd} can be found as Theorem \ref{nonsing_rel_compjac}.

On the other hand, even for versal families, there are many
summands of $R\pi^{[n]}_* \QQ$ which are supported in positive codimension.   
In fact, at a reducible curve $[\rC] \in B$,
there is such a summand for every splitting of $\rC$ into connected subcurves.
The simplest example is given by a one-dimensional family of nonsingular conics degenerating to a reducible one.
The family is versal, and already $R\pi^{[1]}_* \QQ$ has a summand supported at the central point.   
Nonetheless, we will establish various analogues of the main result of \cite{MY, MS}, both at a 
single curve, and globally for what we call {\em independently broken} H-smooth families, see \S \ref{ind_brok_H_smooth} for the definition. 

\medskip
We now describe these results, treating for simplicity only the case of a versal family of locally planar curves. Our results hold for cohomology with $\Qlbar$ coefficients since our methods of proof depends on reduction to positive characteristic.

Let   $\rC$ be a locally planar curve, let $V$ be the set of irreducible components 
and let $(\cC,\rC)\to (B,b)$ be a versal deformation  of $\rC$, small enough so that there  is no monodromy of the irreducible components of $\rC$ in the equigeneric stratum, see Lemma \ref{irred_comp}. 
By considering specialization to the central fibre, the base  $ B$ is stratified by the
closed subsets  $\overline{B_\lambda} \hookrightarrow B$, where $\lambda$  is a partition of $V$, 
corresponding to decompositions $\rC=\bigcup \rC_i$ into connected subcurves. For every $\lambda$ 
we consider the open dense subset $B_\lambda \subseteq \overline{B_\lambda}$ parameterizing nodal curves in  $\overline{B_\lambda}$.
Over $B_\lambda$ the nodes separating the different subcurves persist, and can therefore be normalized, thus giving a family of partial normalizations
$\pi_\lambda: \cC_{\lambda} \to B_{\lambda}.$  


We have the dense, open subsets $B_{\lambda, reg} \subseteq B_\lambda$ where the morphism 
$$
\cC_{\lambda, reg}:={\cC_{\lambda}}_{|{B_{\lambda, reg}}} \to B_{\lambda, reg}
$$
is smooth. Denote by $\iota_\lambda: B_{\lambda, reg} \to B$ the natural inclusions.

We consider the associated symmetric product families 
$$
\pi_\lambda^{[r]}: \cC_{\lambda, reg}^{[r]} \to B_{\lambda, reg},
$$
which are still smooth, so that 
$$R {\pi_\lambda^{[n]}}_* \Qlbar \simeq \bigoplus_i R^i{\pi_\lambda^{[n]}}_* \Qlbar[-i],$$
a direct sum of (pure, semisimple) shifted local systems on $B_{\lambda, reg}$.
Set 
\[
\fF_{\lambda}^{[n]}:=\bigoplus_i \left(\left({\iota_\lambda}\right)_{!*}R^i{\pi_\lambda^{[n]}}_* \Qlbar \right)[-i],
\]
a complex of sheaves supported on $\overline{B_\lambda}$.
 Then we have
\begin{theorem} \label{thm:hilb_intro}
$$R\pi_*^{[n]} \Qlbar \cong \bigoplus_{\lambda \in \cP}  \fF_{\lambda}^{[n-\delta(\lambda)]} [-2\delta(\lambda)](\delta(\lambda))$$
where $\cP$ is the set of partitions of $V$ decomposing $\rC$ in connected subcurves, and $\delta(\lambda)$ is the number of nodes being normalized in the stratum
$B_\lambda$.
\end{theorem}
In Example \ref{ex:vivekformula} this formula is made explicit for the versal deformation of a pair of incident lines.
The notion of {\em higher discriminants of a map} developed in \cite{MS2} and the fact that nodal curves are dense in 
these higher discriminants, which are determined via deformation theory relying on \cite{S}, reduce the proof of 
the theorem to nodal curves.
To identify the two sides of (\ref{thm:hilb}) for a versal deformation of a nodal curve $\rC$ we pass to a family defined 
over a finite field $\FF_{\pi},$ and compute, at every point in the base, the trace of the Frobenius map and its iterates 
on the stalk of the right hand side of the equality and we compare them with the counting of points in the fibres of $\pi^{[n]}$ over the extensions of $\FF_{\pi}$.
Then we conclude by the Grothendieck-Lefschetz formula and Chebotarev theorem (this is why we require
$\Qlbar$ coefficients). Determining
the traces for the  sheaves $IC(\bigwedge^i R^1 \pi_{ sm *}  \overline{\QQ}_l)$ is
the essential computation, which we perform in Section \ref{sec:CKS} using the Cattani-Kaplan-Schmidt complex \cite{cks}.


To relate this result with the discussion above, especially with formula \ref{thm:nodalhilb}, note that we have an `exponential map' which acts on the category of sheaves on $\coprod_{\lambda} B_{\lambda}$ by 
\begin{equation}\label{defexp}
\mathbb{E}xp(\cF)|_{B_{\lambda}} := \bigoplus_{\mu \succcurlyeq \lambda} \bigboxtimes (\cF|_{B_{\mu}}).
\end{equation}
With this notation, our main Theorem reads:

\begin{theorem}\label{thm:main_intro} 
Let $\cC \to B$ be a projective versal family of locally planar curves
admitting
relative fine compactified Jacobians $\overline{J}_\C \to B$ 
(the relative fine compactified Jacobian of a disconnected curve 
is set to be empty by definition)   
Let $g$ denote the locally constant function 
giving the arithmetic genus of the curves being parameterized. 

Then there are isomorphisms in $D^b_c(\coprod B_\lambda)[[q]]$:
\begin{eqnarray*}
(q \LL)^{1-g} \bigoplus_{n=0}^\infty q^n R\pi^{[n]}_{*} \overline{\QQ}_l & \cong &
\mathbb{E}xp \left( 
 (q\LL)^{1-g} \cdot \frac{  \bigoplus
 q^i \cdot IC(\bigwedge^i R^1 \pi_{ sm *}  \overline{\QQ}_l  )[-i]}{(1-q)(1-q \LL)} \right)
 \\
 & \cong &
 \mathbb{E}xp \left(  (q\LL)^{1-g} \cdot \frac{  
 \bigoplus
 q^i \cdot {}^p R^i \pi_{*}^J  \overline{\QQ}_l  [-i]}{(1-q)(1-q \LL)} \right).
\end{eqnarray*}
\end{theorem}

By taking the stalks, Theorem \ref{thm:main_intro} has the following local corollary,

\begin{corollary}\label{Corlocal}
Let $\rC$ be a reduced planar curve.  We write $\rC' < \rC$ to indicate a subcurve. There
is an isomorphism 
$$
\bigoplus_{\rC' < \rC} Q^{\rC'} (q\LL)^{1-g(\rC')}
\bigoplus_{n=0}^\infty q^n H^*((\rC')^{[n]}; \Qlbar) 
= 
\EE xp \left(\sum_{\rC' < \rC} \frac{Q^{\rC'} (q\LL)^{1-g(\rC')}}{(1-q)(1-q\LL)}
\bigoplus_i q^i Gr_P^i H^*(\overline{J}_{\rC'};\Qlbar) 
\right) 
$$
Here, $Gr_P^i H^*(\overline{J}_{\rC'};\Qlbar)$ is by definition
${}^p R^i \pi_{*}^J  \Qlbar [-i]|_{[\rC']}$ with respect to any H-smooth family containing $\rC'$ and $\overline{J}_{\rC'}$ is any fine compactified Jacobian of $C'$ 
(with the convention that $\overline{J}_{\rC'}$ is the empty set for disconnected $\rC'$). 
\end{corollary}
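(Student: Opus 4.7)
The plan is to deduce the corollary from Theorem \ref{thm:main} by specialization. Let $V$ be the set of irreducible components of $\rC$ and choose an $H$-smooth independently broken family $\cC_S \to B_S$ indexed by $V$ whose central fibre in $B_S$ is the subcurve $\rC_S < \rC$ formed by the components in $S$; the versal family example following Definition \ref{def:H_sibf} does the job. Every subcurve $\rC' < \rC$ equals $\rC_S$ for a unique $S \subset V$, so it suffices to take the stalk at $[\rC_S] \in B_S$ of both sides of Theorem \ref{thm:main}, multiply by $Q^S$, and sum over $S \subset V$.

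On the left, the stalk of $(q\LL)^{1-g}\bigoplus_n q^n R\pi^{[n]}_* \Qlbar$ at $[\rC_S]$ is $(q\LL)^{1-g(\rC_S)}\bigoplus_n q^n H^*(\rC_S^{[n]};\Qlbar)$, so the weighted sum reproduces the left-hand side of the corollary. On the right, for each partition $S = \coprod S_i$ the structure map $\prod_i B_{S_i} \hookrightarrow B_S$ sends the tuple of central points $([\rC_{S_i}])_i$ to $[\rC_S]$, so unwinding the definition \eqref{defexp} gives
$$\EE xp(\cF)|_{B_S,\,[\rC_S]} \;=\; \bigoplus_{S = \coprod S_i}\, \bigotimes_i \cF|_{B_{S_i},\,[\rC_{S_i}]}.$$
By the definition of the perverse Leray filtration, combined with Theorem \ref{thm:jac} (which ensures independence from the chosen $H$-smooth family),
$$\cF|_{B_{S'},\,[\rC_{S'}]} \;=\; \frac{(q\LL)^{1-g(\rC_{S'})}}{(1-q)(1-q\LL)}\bigoplus_i q^i Gr_P^i H^*(\overline{J}_{\rC_{S'}};\Qlbar),$$
which vanishes whenever $\rC_{S'}$ is disconnected, in accordance with the convention.

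To finish, group the double sum over $S \subset V$ and partitions $S = \coprod S_i$ by the unordered collection of parts: the result is the expansion of $\prod_{\emptyset \ne S' \subset V}\bigl(1 + Q^{S'}\cF|_{B_{S'},\,[\rC_{S'}]}\bigr)$ in $\ZZ[[Q^v : v \in V]]/(Q^{2v})_v$. Since $\EE xp(Q^v) = 1+Q^v$ and $\EE xp$ sends sums to products, one has $\EE xp(\sum_{S'} Q^{S'} x_{S'}) = \prod_{S'}(1+Q^{S'} x_{S'})$ modulo $(Q^{2v})$, so this product equals $\EE xp\bigl(\sum_{S' \subset V} Q^{S'}\cF|_{B_{S'},\,[\rC_{S'}]}\bigr)$, matching the right-hand side of the corollary. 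The one genuine input is Theorem \ref{thm:main}; the remaining work is the routine combinatorial bookkeeping translating the box-product presentation of $\EE xp$ into its power-series form.
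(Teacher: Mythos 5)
Your proposal is correct and is essentially the paper's own argument: the corollary is obtained by applying Theorem \ref{thm:main} to the versal independently broken family $\{\cC_S \to \Def(\rC_S)\}$, taking stalks at the central points $[\rC_S]$ (using proper base change on the left, the K\"unneth description of stalks of the box products in \eqref{defexp} on the right, and Theorem \ref{thm:jac} for independence of the choice of H-smooth family and of fine compactified Jacobian), and then performing exactly the bookkeeping you describe to rewrite the sheaf-level $\mathbb{E}xp$ as the power-series $\EE xp$ modulo $(Q^{2v})$. Nothing further is needed.
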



The point of these results is that the perverse filtration appears prominently in recent
studies of the cohomology of the Hitchin system \cite{dCHM, CDP} and its fibres \cite{GORS, OY}, 
but is difficult to compute directly.  On the other hand, the cohomology of the 
Hilbert schemes is more directly accessible, and the theorem explains how to recover the 
associated graded pieces of the perverse filtration on the Jacobian from the collection of
all cohomologies of the Hilbert schemes.  

This sort of relation was in a certain sense predicted
in the physics literature \cite{GV, KKV, HST, CDP} as a relation between refined Gopakumar-Vafa invariants
(here, the Jacobians) and the refined Donaldson-Thomas invariants (here, the Hilbert schemes).  

\vspace{2mm} 
\noindent {\bf Acknowledgements.} 
We thank Dan Abramovich, Riccardo Grandi, Tam\'as Hausel and Jochen Heinloth for helpful discussions. Special thanks to Mark A. de Cataldo, 
who, on many occasions, pointed out to the first named author many misconceptions about the 
decomposition theorem over a finite field, and helped to correct the mistakes arising from them.
L.M. is partially supported by PRIN project 2015 ``Spazi di moduli e teoria di Lie".
During the (long) preparation of this paper L.M. was a member of the School of Mathematics of 
the Institute for Advanced Study in Princeton, partially funded by the 
Giorgio and Elena Petronio  fellowship.
V. S. is supported by the NSF grant DMS-1406871, and by a Sloan fellowship. F.V. is partially supported by the MIUR project  ``Spazi di moduli e applicazioni'' (FIRB 2012).

\section{Background}

\subsection{Notation}

\subsubsection{} \label{N:curves}
A \textbf{curve}  is a \emph{reduced} (but not necessarily geometrically irreducible)
scheme  of pure dimension $1$ over a \emph{perfect} field $k$.  In practice we take
$k$ to be the complex numbers ($\CC$), a finite field ($\FF_{\pi}$), or the algebraic 
closure of a finite field ($\overline{\FF}_{\pi}$). 

Unless otherwise specified, a curve is meant to be projective.

\subsubsection{} 
A \textbf{family of curves}  $\pi:\C\to B$ is a flat and proper morphism of $k$-schemes all of whose geometric fibers are curves.  
If $\pi$ is a projective morphism,  we say that the family is projective.

\subsubsection{} 
Given a curve $\rC$, we denote by $\rC_{\rm sm}$ the smooth locus of 
$\rC$, by $\rC_{\rm sing}$ its singular locus, by $\nu:\rC^{\nu}\to \rC$ 
the normalization morphism, and by $V(\rC)=\pi_0(\rC_{\rm sm})=\pi_0(\rC^{\nu})$ 
the set of its irreducible components: $\rC=\bigcup_{v\in V(\rC)} \rC_v$. 

\subsubsection{}\label{invaria}

We employ the following names and notation for numerical invariants of a curve $\rC$:

\vspace{4mm}
\begin{tabular}{l|l|l}
 name & notation & formula \\
 \hline
 number of irreducible components & $\gamma(\rC)$ & \\
 arithmetic genus & $g(\rC)$ & $1-\chi(\O_\rC)$ \\
 geometric genus & $g(\rC^{\nu})$ & \\
 cogenus, or total delta invariant & $\delta(\rC)$ & $g(\rC)-g(\rC^{\nu})$ \\
 abelian rank & $g^\nu(\rC)$ &  $g({\rC^\nu}) - 1 + \gamma(C)$ \\
 affine rank &  $\delta^a(\rC)$ & $\delta(C)+1-\gamma(C) = g(\rC)-g^{\nu}(\rC)$
\end{tabular}
\vspace{4mm} 
 
Recall that the cogenus is equal to the 
sum of the local delta invariants of the singularities:
$$\displaystyle \delta(\rC):=\sum_{q\in \rC_{\rm sing}} [k(q):k]\, \cdot \, \delta(\rC,q)=
\sum_{q\in \rC_{\rm sing}}[k(q):k]\, \cdot \, {\rm length}(\nu_*\O_{\rC^{\nu}}/\O_{\rC})_q.$$

The terminology ``affine rank'' and ``abelian rank'' will be explained in \ref{N:Jac-gen}. 
Note the abelian rank is also equal to 
the sum of the genera of the connected components of the normalization. 

The cogenus $\delta(\rC)$ and the affine rank $\delta^a(\rC)$ are upper semicontinuous in families of curves (see \cite[Prop. 2.4]{DH} or \cite[Chap. II, Thm. 2.54]{GLS} in characteristic zero and \cite[Prop. A.2.1]{L-jc} and \cite[Lem. 3.2]{MRV2} in
arbitrary characteristic). 
 Equivalently, the geometric genus and the abelian rank are lower semicontinuous.

\subsubsection{}
A curve $\rC$ is \textbf{locally planar at $p\in \rC$} if  the completion
$\wh{\O}_{\rC,p}$ of the local ring of $\rC$ at $p$ has embedded dimension two, i.e., 
$\wh{\O}_{\rC,p}\cong k[[x,y]]/(f),$ for some reduced $f=f(x,y)\in k[[x,y]]$.

A curve $\rC$ is locally planar if it is locally planar at every $p\in \rC$.  Being locally a divisor
in a smooth space, a locally planar
curve is Gorenstein, i.e. the dualizing sheaf $\omega_\rC$ is a line bundle.

\subsubsection{} 
A \textbf{subcurve} $D$ of a curve $\rC$ is a reduced subscheme of pure dimension $1$.  We say that a sub-curve $D\subseteq \rC$ is non-trivial if $D\neq \emptyset, \rC$.

\subsubsection{} \label{N:Jac-gen}
Given a curve $\rC$, the \textbf{generalized Jacobian} of $\rC$, denoted by $J_\rC$ or by $\Pic^{\un 0}(\rC)$,
is the connected component of the Picard scheme $\Pic(\rC)$ of $\rC$ containing the identity, see \cite[\S 8.2, Thm. 3]{BLR} and references therein for existence theorems.
The generalized Jacobian of $\rC$ is a connected commutative smooth algebraic group of dimension equal to
$h^1(\rC,\O_\rC)$. Under mild hypotheses such as existence of a rational $k$-point, or triviality of the Brauer group of $k$, certainly met in the cases $k=\FF_\pi, \ov{\FF}_\pi,\CC$,
its group of $k'$-valued points, for $k'$ a finite extension of $k$, parameterizes line bundles on $\rC$, defined over  $k'$, of multidegree $\un 0$ (i.e. having
degree $0$ on each irreducible component of $\rC$)  with the multiplication given by the tensor product.

From the exact sequence of sheaves on $\rC$
$$1 \longrightarrow {\Gm} \longrightarrow \nu_* {\Gm} \longrightarrow \nu_* {\Gm} /  {\Gm} \longrightarrow 1$$ 
where $\nu:\rC^{\nu}\to \rC$ the normalization morphism, it follows easily that the generalized Jacobian $J_{\rC}$ is an extension of an abelian variety of dimension $g^{\nu}(\rC)$ (namely the Jacobian of the normalization $\rC^{\nu}$) 
by an affine algebraic group of dimension equal to $\delta^a(\rC)$.

\subsubsection{} 
We use $\LL$ to mean ``whatever incarnation of the Lefschetz motive is appropriate''.  That is,
if we are discussing ungraded vector spaces in the presence of weights, e.g. the K-group of mixed Hodge structures or of
continuous $\hat{\ZZ}$ representations over $\Qlbar$, we mean a one dimensional vector space twisted by $(-1)$.  If we are working
with graded vector spaces in the presence of weights, i.e. in the derived category of the above rather than the K-group, we mean a one dimensional vector space,
twisted by $(-1)$, and placed in cohomological degree $2$, e.g. $\LL=\Qlbar(-1)[-2]$.
In the Grothendieck ring of varieties $\LL$ is  the class of the
affine line.

\subsection{The Cattani-Kaplan-Schmid complex} \label{sec:CKS_preliminaries}
In this paper we use the convention according to which the {\bf intersection cohomology complex} $IC(L)$ of a local system $L$ on a dense open set $Z^0$ of a nonsingular variety $Z$ restricts to $L$, as opposed to 
$L[\dim Z]$.  In our convention we say $K$ is {\bf perverse} on $Z$  if and only if $K[\dim Z]$ is perverse in the sense of \cite{BBD}.
Thus, given a local system $L'$ on a locally closed  $Z' \subset Z$, the complex $IC(L')[- \mathrm{codim}Z']$ is perverse.

If $\mathscr L$ is a unipotent local system underlying a variation of pure Hodge structures of weight $w$ on a product of punctured polydisks $(\DD^*)^r \subset \DD^r$,
the paper \cite[\S 1]{cks}, gives a model for the stalk $IC(\mathscr L)_0$ at $0 \in \DD^r$ of the intersection cohomology complex of  $\mathscr L$ and its weight filtration (see also
\cite[\S 3]{saito2}).
This model works just as well in the $\ell$-adic \'etale theory,  and we shortly review it here, as it plays a central role in our computations.
 According to our conventions the intersection cohomology complex lives in degrees $ [0, \ldots, $ $\dim Y -1]$.
Assume $Y$ is a regular scheme over ${\FF}_\pi$, and $D=\bigcup_{j \in J} D_j$ is a normal crossing divisor. After  \'etale localization we may assume that $Y$ is some Zariski neighborhood of the origin in $\AA^n$, with coordinate functions $t_1, \ldots, t_n$, and $D$ is defined by the equation $\prod_{j \in J}t_j=0$, with $J=\{1, \ldots , k\}$.
We denote $j:Y\setminus D \to Y$.

Let $\mathscr L$ be  a ``lisse" {\em unipotent} sheaf on $Y \setminus D$, tamely ramified  along $D$, pointwise pure of weight $w$.

Let $\Psi_1, \ldots, \Psi_k$ be the nearby-cycle functors associated with the functions $t_1, \ldots , t_k$, and denote 
\begin{equation*}\label{psi_mixed}
\Psi= {\Psi}_{1} \circ \ldots \circ{\Psi}_k.
\end{equation*}
Thus 
${\mathrm L}:= \Psi ({\mathscr L})$ is a lisse mixed sheaf on $E:= \bigcap_{j \in J} D_j$, endowed with {\em commuting nilpotent} endomorphisms 
$N_j: {\mathrm L} \to {\mathrm L}(-1)$. The weights are given in terms of the monodromy filtration of a general element $\sum a_jN_j$, as explained in \cite{cks}.

\begin{proposition}\label{cks_prop}

We have the following isomorphism for the restriction of the intersection cohomology complex to $E$:
\begin{equation*}\label{cks}
i_E^*IC(\mathscr{L}) \simeq {\mathbf C}^{\bullet}( \{ N_j \}, \mathscr{L}):=\{
0 \to  {\mathrm L} \to \bigoplus_{|I|=1} \im N_I  \to \bigoplus_{|I|=2} \im N_I\to \cdots \to  \im N_J  \to 0 \}
\end{equation*}
where the differentials are given by
\begin{equation*}\label{diff_cks}
(-1)^{k}N_i:\im N_{i_1}\cdots N_{i_k}\to \im  N_iN_{i_1}\cdots N_{i_k} \hbox{ if } i \neq \{i_1, \cdots , i_k\}.
\end{equation*}
\end{proposition}

\subsection{Deformation theory of locally planar curves}\label{S:defo}

We recall facts about the deformation theory of locally planar curves 
and their simultaneous desingularization. 
These facts are well known over the complex numbers; original proofs 
can be found in the papers
 \cite{T, DH} and a textbook treatment in \cite{GLS}.  They have also been 
 partially extended  to positive characteristic  in \cite{L-jc}, \cite{MY}, \cite{MRV2}.
For maximal accessibility, we give  precise references
to the book of Sernesi \cite{Ser} for some of the standard deformation theoretic facts
we use.  

Let $\Def_\rC$ be the \emph{deformation functor} of a (reduced and projective) curve $\rC$ (\cite[Sec. 2.4.1]{Ser}). For $p\in \rC_{\rm sing}$, we denote by $\Def_{\rC,p}$ the
deformation functor of the complete local $k$-algebra $\wh{\O}_{\rC,p}$ (\cite[Sec. 1.2.2]{Ser}).
There is a natural transformation of functors
\begin{equation}\label{E:mor-func}
\Def_\rC\to \Def_\rC^{\rm loc}:=\prod_{p\in \rC_{\rm sing}} \Def_{\rC,p}.
\end{equation}
If $\rC$  has locally planar singularities (or, more generally, locally complete intersection singularities), the  functors $\Def_\rC$ and $\Def_{\rC}^{\rm loc}$ are smooth (\cite[Cor. 3.1.13(ii) and Ex. 2.4.9]{Ser})  and 
the morphism \eqref{E:mor-func} is  smooth (\cite[Prop. 2.3.6]{Ser}).

Given any deformation $\pi:(\C,\rC)\to (B, b)$ of $\rC$, i.e. a family of curves $\pi:\C\to B$  together with a $k$-point $b\in B$ such that $\rC=\C_b:=\pi^{-1}(b)$, by pulling back $\pi$ via the natural morphism 
$\Spf \hat{\O}_{B,b}\to B$ (where $\Spf$ to denote the formal spectrum), we get a formal deformation of $\rC$ over $\hat{\O}_{B,b}$, which induces a morphism of functors (see \cite[p. 78]{Ser})
\begin{equation}\label{E:modmap}
\varphi_{\pi,b}:h_{\hat{\O}_{B,b}}:=\Hom(\hat{\O}_{B,b}, -) \to \Def \rC
\end{equation}
By taking the differential of $\varphi_{\pi,b}$, we get the \emph{Kodaira-Spencer map} of the deformation $\pi:(\C,\rC)\to (B, b)$ (see \cite[Thm. 2.4.1(iv) and p. 79]{Ser}
\begin{equation}\label{E:KS}
k_{\pi,b}:=d \varphi_{\pi,b}:T_b(B) \to T\Def \rC=\Ext^1(\Omega_{\rC},\O_{\rC}). 
\end{equation}
Composing with the differential of the morphism \eqref{E:mor-func}, we get the \emph{local Kodaira-Spencer map}
\begin{equation}\label{E:KSloc}
k^{\rm loc}_{\pi,b}:T_b(B) \stackrel{k_{\pi, b}}{\longrightarrow} T\Def \rC\longrightarrow T\Def^{\rm loc}_{\rC}=H^0(\rC,{\mathcal Ext}^1(\Omega^1_\rC,\O_{\rC})).
\end{equation}

In the sequel, we will be often dealing with versal deformations of a curve $\rC$ and versal family of curves, which we are now going to define (see \cite[Def. 2.2.6, Def. 2.5.7]{Ser}).

\begin{definition}\label{D:versal}
Let $\pi:\C\to B$ be a family of curves, i.e. a flat and proper morphism of $k$-schemes whose fibers are (reduced) curves.
\begin{enumerate}[(i)]
\item Let $b$ be a $k$-point of $B$ with fiber $\C_b=\rC$. We say that $\pi:\C\to B$ is \emph{versal} at  $b$ (or that $\pi:(\C,\rC)\to (B, b)$ is a \emph{versal deformation} of $\rC$) if the morphism $\varphi_{\pi, b}$ is smooth. 
\item We say that $\pi:\C\to B$ is a \emph{versal family} if it is versal at every $k$-point of $B$. 
\end{enumerate}
\end{definition}

In the following Fact, we collect the well-known properties of versal deformations of curves, that we are going to need in the sequel. 

\begin{fact}\label{F:versal}
Let $\rC$ be a (reduced and projective) curve. 
\begin{enumerate}[(i)]
\item \label{F:versal1} There exists a versal projective deformation $\pi:(\C,\rC)\to (B,b)$ of $\rC$ over a connected $k$-variety  $B$  (i.e. a scheme of finite type over $k$). 
\item \label{F:versal2} Any versal deformation $\pi:(\C,\rC)\to (B, b)$ of $\rC$ over a scheme $B$ of finite type over $k$ is versal over an open subset of $B$ containing $b$. 
\item \label{F:versal3} Let $\pi:(\C,\rC)\to (B, b)$ be a deformation of $\rC$. Then $\pi:(\C,\rC)\to (B, b)$ is a versal deformation of $\rC$
 and $\Def_\rC$ is smooth if and only if $B$ is smooth at $b$ and the Kodaira-Spencer map $k_{\pi,b}$ is surjective.
 \end{enumerate} 
\end{fact}
It follows from part \eqref{F:versal3} (and what said above) that a deformation $\pi:(\C,\rC)\to (B, b)$  of curve $\rC$ with locally planar singularities (or, more generally, with locally complete intersection singularities) is versal if and only if $B$ is smooth at $b$ and the local Kodaira-Spencer map 
$k_{\pi,b}^{\rm loc}$ is surjective. In particular, if $\pi:\C\to B$ is a versal family of curves with  locally complete intersection singularities, then the base $B$ of the family is smooth.
\begin{proof}
Part \eqref{F:versal1} follows by combining the Schlessinger's criterion for the existence of a versal formal deformation of projective schemes  (see \cite[Cor. 2.4.2]{Ser}),  the Grothedieck's theorem on the effectivity of formal deformations (which uses that $H^2(\rC,\O_\rC)=0$, see \cite[Thm. 2.5.13]{Ser}), and the Artin's theorem on the algebraization of effective formal deformations of projective schemes (see \cite[Thm. 2.5.14]{Ser}).

Part \eqref{F:versal2} is the so called openness of versality (see \cite{Fle}).

Part \eqref{F:versal3} follows from \cite[Prop. 2.5.8(ii)]{Ser}.
\end{proof}

Given a versal family of curves $\pi:\C\to B$, the base scheme $B$ admits a  stratification (called the \emph{equigeneric stratification})  into locally closed subsets according to the cogenus of the geometric fibers of the family $\pi$. More precisely, using the notation introduced in \ref{N:curves}, 
consider the cogenus function
\begin{equation}
\begin{aligned}
\delta: B & \longrightarrow \bbN,\\
 t & \mapsto \delta(\C_{\ov t}),\\
\end{aligned}
\end{equation}
where $\C_{\ov t} :=\pi^{-1}(t)\times_{k(t)} \ov{k(t)}$ is a geometric fiber of $\pi$ over the point $t\in B$.

We call the strata of constant cogenus the {\em equigeneric strata}, and write for any $d\geq 0$
\begin{eqnarray}\label{E:strata-RC}
B^{\delta= d} & := & \{t\in B \: : \: \delta(\C_{\ov t})= d \} \\
 B^{\delta\geq d}& := & \{t\in B \: : \: \delta(\C_{\ov t})\geq d \}
\end{eqnarray}
By the upper semicontinuity of $\delta$ (see \ref{invaria}), we have $B^{\delta\geq d} = \overline{B^{\delta= d}}$. 

The main properties of the equigeneric strata for versal family of curves with locally planar singularities are contained in the following result, due originally  to Teissier and Diaz-Harris if $k=\bbC$ (see \cite[Chap. II]{GLS}), and subsequently  extended to fields of big characteristics  in \cite[Prop. 3.5]{MY} and then to fields of arbitrary characteristics in \cite[Thm. 3.3]{MRV2}.

\begin{fact}\label{F:strati}
Let $\pi:\C\to B$ be a versal family of curves with locally planar singularities. Then we have that (for any $d\geq 0$)
\begin{enumerate}[(i)]
\item \label{T:strati1} the closed subset $B^{\delta\geq d} \subseteq B$ has codimension at least equal to $d$;
\item \label{T:strati2} each generic point $\eta$ of $B^{\delta\geq d}$  is such that $\C_{\ov \eta}$ is a nodal curve.
\end{enumerate}
\end{fact}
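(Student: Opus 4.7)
The plan is to reduce both assertions to a local statement at each singularity of $\rC$, using the smoothness of the local-to-global deformation morphism
$$\Def_\rC \longrightarrow \Def_\rC^{\rm loc} = \prod_{p \in \rC_{\rm sing}} \Def_{\rC,p}$$
recalled in the preceding subsection. First I would isolate the required local input: for each plane curve singularity $(\rC,p)$ and each $0 \leq d_p \leq \delta(\rC,p)$, the equigeneric locus $\Def_{\rC,p}^{\delta \geq d_p} \subset \Def_{\rC,p}$ is closed of codimension at least $d_p$, and at each of its generic points the singularity has been deformed into exactly $d_p$ nodes and nothing else. Over $\bbC$ this is the classical Teissier--Diaz-Harris theorem, and in arbitrary characteristic it is established in \cite[App. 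A]{L-jc}, \cite[Prop. 3.5]{MY}, and \cite[Thm. 3.3]{MRV2}; this is the sole nontrivial input.

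Next, since the total cogenus of a deformation is the sum over $p \in \rC_{\rm sing}$ of the local cogenera of the singularities clustering over $p$, one obtains the set-theoretic decomposition
$$\Def_\rC^{\rm loc,\,\delta \geq d} \;=\; \bigcup_{\substack{(d_p)_{p \in \rC_{\rm sing}} \\ \sum_p d_p \,=\, d}} \;\prod_{p \in \rC_{\rm sing}} \Def_{\rC,p}^{\delta \geq d_p}.$$
Since codimensions add in products of smooth factors, each piece on the right is closed of codimension at least $\sum_p d_p = d$ in $\Def_\rC^{\rm loc}$, and the generic point of any irreducible component of such a piece parameterizes a curve whose singular set consists of exactly $d_p$ nodes near each $p$, for a total of $d$ nodes.

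Finally I would pull this back along the smooth morphism $\Def_\rC \to \Def_\rC^{\rm loc}$. A smooth morphism of formal schemes preserves codimensions of closed subsets and sends generic points of irreducible components of the preimage to generic points of irreducible components of the target, so on $\Def(\rC) = \Spec R_\rC$ the closed subset $\Def(\rC)^{\delta \geq d}$ inherits codimension at least $d$, proving (i), and over a generic point $\eta$ of any irreducible component the geometric fiber $\C_{\bar \eta}$ acquires exactly $d$ nodes and no other singularities, proving (ii). The only real obstacle is the local theorem, already in the literature; granted it, the global assertions are a formal consequence of the smoothness of $\Def_\rC \to \Def_\rC^{\rm loc}$ and the additivity of the cogenus.
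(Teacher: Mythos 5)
The paper does not actually prove this statement: it is quoted as a known Fact, with the complex case attributed to Teissier and Diaz--Harris (\cite[Chap. II]{GLS}, \cite{T}, \cite{DH}) and the positive-characteristic extensions to \cite[Prop. 3.5]{MY} and \cite[Thm. 3.3]{MRV2}, so there is no internal argument to compare yours against. Your reduction --- isolate the local statement for each plane curve singularity, decompose $\Def_\rC^{\rm loc,\,\delta\geq d}$ by distributing the cogenus among the singular points, and transport codimension and generic points along the smooth morphism $\Def_\rC \to \Def_\rC^{\rm loc}$ --- is exactly the standard local-to-global argument that those references use, and I see no step in it that fails; the minor points you leave implicit (that singularities of nearby fibres cluster at the singular points of $\rC$, and that formal smoothness of $\Def_\rC \to \Def_\rC^{\rm loc}$ preserves codimension and sends minimal primes of the preimage to minimal primes) are routine. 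The one thing to be clear-eyed about is that the ``sole nontrivial input'' you invoke --- codimension at least $d_p$ \emph{and} generic nodality of every component of the local locus $\Def_{\rC,p}^{\delta\geq d_p}$ for every intermediate $0\leq d_p\leq \delta(\rC,p)$, not merely for the maximal stratum --- is essentially the local form of the Fact itself; so your proposal is best read as a correct reduction of the global statement to the cited literature, which is precisely the role the Fact plays in the paper.
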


On the normalization of each equigeneric stratum of $B$, the pull-back of the  family $\pi:\C\to B$ admits a simultaneous normalization.
More precisely we have the following result which was originally proved in \cite[1.3.2]{T} if $k=\bbC$ and then extended to  arbitrary fields in \cite[Prop. A.2.1]{L-jc}. 

\begin{fact}\label{F:sim-res}
Let $\pi:\C\to B$ be a versal family of curves with locally planar singularities.  
For any $d\geq 0$, consider the normalization $\wt{B^{\delta= d}}$ of the equigeneric stratum with cogenus $d$ and denote by $\pi^d:\C^{\delta=d}\to \wt{B^{\delta= d}}$ the pull-back of the universal family 
$\pi:\C\to B$. Then the normalization $\nu^d:\wt{\C^{\delta=d}}\to \C^{\delta=d}$ is a simultaneous normalization of the family $\pi^d$, i.e.
\begin{enumerate}[(i)]
\item  the composition $\nu^d:\wt{\C^{\delta=d}}\stackrel{\nu^d}{\longrightarrow} \C^{\delta=d}\stackrel{\pi^d}{\longrightarrow} \wt{B^{\delta= d}}$ is smooth;
\item the morphism $\nu^d$ induces the normalization morphism on each geometric fiber of $\pi^d$.
\end{enumerate}
\end{fact}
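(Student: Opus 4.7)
The plan is to reduce the assertion to a local statement at each singular point of $\rC$, then invoke a local existence theorem for simultaneous normalization in $\delta$-constant families of planar singularities, and finally deduce smoothness of the composition from fibrewise smoothness plus flatness.

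For the reduction step I use the smoothness of the natural transformation \eqref{E:mor-func}. As recalled in Section~\ref{to_get_rid}, since $\rC$ has locally planar singularities both $\Def_\rC$ and $\Def_\rC^{\rm loc} = \prod_{p \in \rC_{\rm sing}} \Def_{\rC,p}$ are smooth, and the forgetful morphism between them is smooth. Consequently, the equigeneric stratification on $\Def(\rC)$ pulls back from the product of the local ones: the closed locus $\Def(\rC)^{\delta\geq d}$ is the preimage of $\bigcup_{\sum_p d_p = d} \prod_p \Def_{\rC,p}^{\delta_p\geq d_p}$, and analogously for the locally closed strata. Since normalization commutes with smooth base change, and since smoothness of morphisms is preserved under smooth base change, both (i) and (ii) can be checked after pulling back to the corresponding product of normalized local strata. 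This reduces the assertion to producing a simultaneous normalization for each $\Def_{\rC,p}$ separately and then taking the product.

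For each $p$ with $\wh{\O}_{\rC,p} \cong k[[x,y]]/(f)$, the task is to construct on the normalized local equigeneric stratum $\wt{\Def_{\rC,p}^{\delta_p = d_p}}$ a flat family whose geometric fibers are the normalizations of the corresponding deformed plane curve germs. This is the essential content of Teissier's classical simultaneous resolution theorem. The conductor ideal plays the central role: for a planar singularity, $\delta$ equals the colength of the conductor in $\nu_* \O_{\rC^\nu}$, and a $\delta$-constant deformation is exactly one in which the conductor deforms flatly. The simultaneous normalization can then be built as the flat family obtained by taking the normalization of the total space after base change to the normalized stratum, and checking flatness via the control of the conductor. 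Laumon's extension \cite[Prop. A.2.1]{L-jc} adapts this to arbitrary characteristic, using that we work over a perfect base field $k$ so that the normalization of each fiber remains geometrically well-behaved and the conductor is compatible with specialization.

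Once the local simultaneous normalizations are assembled globally into $\wt{\C^{\delta=d}}\to \C^{\delta=d}$, the smoothness of the composition in (i) follows from a standard argument combining flatness and fibrewise smoothness. By Fact~\ref{F:strati}(ii), each generic point of $\Def(\rC)^{\delta=d}$ parameterizes a nodal curve with $d$ nodes, so the normalized base $\wt{\Def(\rC)^{\delta=d}}$ is generically smooth and everywhere normal; the composition is flat because each local piece is; and every geometric fiber is a disjoint union of smooth curves. The Jacobian criterion then yields smoothness. The main obstacle is clearly the second step: producing a flat family of normalizations over the normalized $\delta$-constant stratum. Already in characteristic zero this requires delicate control of the conductor in families, and the characteristic-free extension by Laumon — ensuring no pathologies arise in small or mixed characteristic — is the technically most demanding part of the whole argument.
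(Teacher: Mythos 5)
The paper does not actually prove this statement: it is quoted as a known Fact, with the proof deferred to Teissier \cite[1.3.2]{T} over $\CC$ and Laumon \cite[Prop. A.2.1]{L-jc} in arbitrary characteristic, and your argument ultimately rests on exactly those same results --- the $\delta$-constant/conductor criterion you invoke is the cited theorem, not something you re-derive. So your proposal is essentially the same approach as the paper's; the preliminary localization via the smoothness of $\Def_\rC \to \Def_\rC^{\rm loc}$ and the concluding ``flat plus smooth geometric fibers'' step are harmless but not really needed, since Laumon's proposition applies directly to the pulled-back family over the normal base $\wt{\Def(\rC)^{\delta= d}}$, on which $\delta$ is constant.
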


\subsection{Fine compactified Jacobians} \label{sec:fine}

We collect results on fine compactified Jacobians 
of connected (reduced projective) curves with locally planar singularities and their families.

\subsubsection{Fine compactified Jacobians}

Throughout this subsubsection, we fix a connected (geometrically reduced and projective) curve $\rC$ over a field $k$ and we set $\ov \rC:=C\otimes_{k} \ov k$. Moreover, given a sheaf $\I$ on $\rC$, we denote by $\ov \I$ its pull-back to $\ov \rC$.

Fine compactified Jacobians of $\rC$ will parametrize certain sheaves on $\rC$, which we now introduce.

\begin{definition}
A coherent sheaf $\I$ on a curve $\rC$ is said to be:
\begin{enumerate}[(i)]
\item \emph{rank-$1$} if $\ov \I$ has generic rank $1$ at every irreducible component of $\ov \rC$;
\item \emph{torsion-free} (or pure of dimension one) if  \,${\Supp}(\ov\I)=\ov\rC$ and every non-zero subsheaf $\mathcal{J}\subseteq \I$ is such that $\dim \Supp(\mathcal{J})=1$.
\end{enumerate}
\end{definition}

\noindent Note that any line bundle on $\rC$ is a rank-$1$, torsion-free sheaf.

The construction of fine compactified Jacobians of a reducible curve $\rC$ will depend on the choice of a general polarization on $\rC$, which we now introduce.  
We follow the notation of \cite{MRV1}. 

\begin{definition}\label{pola-def}
\noindent
\begin{enumerate}[(i)]
\item A \emph{polarization} on a curve $\rC$ is a collection  of rational numbers $\pol=\{\pol_{\rC_i}\}$, one for each irreducible component $\rC_i$ of $\ov \rC$,
such that $| \pol |:=\sum_i \pol_{\rC_i}\in \Z$. We call $|\pol |$ the total degree of $\pol$.
Given any subcurve $D \subseteq \ov \rC$, we set $\displaystyle \pol_D:=\sum_{\rC_i\subseteq D} \pol_{\rC_i}$.

\item A polarization $\pol$ is called \emph{integral} at a subcurve $D\subseteq \ov \rC$ if
$\pol_E\in \Z$
 for any connected component $E$ of $D$ and of $D^c$.
A polarization is called
\emph{general} if it is not integral at any non-trivial subcurve $D\subset \ov \rC$.
\end{enumerate}
\end{definition}

Given a polarization $\pol$ on $\rC$, we can define a (semi)stability condition for torsion-free, rank-$1$ sheaves on $\rC$. To this aim,
for each subcurve $D$ of $\ov \rC$ and each torsion-free, rank-$1$ sheaf $\I$ on $\rC$, we denote by $\ov\I_D$ the quotient of the restriction $\ov\I_{|D}$ of $\ov\I$ to $D$ modulo its biggest torsion subsheaf.
It is easily seen that $\ov\I_D$ is torsion-free, rank-$1$ sheaf on $D$.

\begin{definition}\label{D:sh-ss}
\noindent Let $\pol$ be a polarization on $\rC$.
Let $\I$ be a torsion-free rank-1  sheaf on $\rC$ of degree $d=|\pol|$.
\begin{enumerate}[(i)]
\item \label{D:sh-ss1} We say that $\I$ is \emph{semistable} with respect to $\pol$ (or $\pol$-semistable) if
for every non-trivial subcurve $D\subset \ov\rC$, we have that
\begin{equation}\label{E:mdeg-sh}
\chi(\ov\I_D)\geq \pol_D,
\end{equation}
where $\chi$ denotes the Euler-Poincar\'e characteristic.

\item \label{D:sh-ss2} We say that $\I$ is \emph{stable} with respect to $\pol$ (or $\pol$-stable) if it is semistable with respect to $\pol$
and if the inequality (\ref{E:mdeg-sh}) is always strict.
\end{enumerate}
\end{definition}

General polarizations on $\rC$ can be also characterized more geometrically: 

\begin{lemma}\label{L:nondeg}
 \cite[Lemmas 2.14, 5.13]{MRV1}
Let $\pol$ be a polarization on a curve $\rC$. If $\pol$ is general then
every rank-1 torsion-free sheaf which is $\pol$-semistable is also $\pol$-stable. 
The converse implication is true if $\ov\rC$ has locally planar singularities.
\end{lemma}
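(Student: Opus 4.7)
The proof handles the two implications separately; the forward direction holds in general, while the converse requires the locally planar hypothesis. A structural observation common to both is that if $D=\coprod_i E_i$ is the decomposition of a subcurve into its connected components, then $\ov\I_D=\bigoplus_i \ov\I_{E_i}$, so the semistability inequality for $D$ decomposes as the sum of those for the $E_i$.

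For the forward direction I would argue by contrapositive. Assume $\I$ is $\pol$-semistable but not $\pol$-stable, witnessed by a non-trivial $D$ with $\chi(\ov\I_D)=\pol_D$. The componentwise semistability inequalities $\chi(\ov\I_{E_i}) \ge \pol_{E_i}$ together with the matching sum equality $\sum_i \chi(\ov\I_{E_i})=\chi(\ov\I_D)=\pol_D=\sum_i\pol_{E_i}$ force each $\chi(\ov\I_{E_i})=\pol_{E_i}$, so each $\pol_{E_i}$ is an integer. To obtain the parallel integrality for the connected components $F_j$ of $D^c$, I would choose $D$ maximal (in number of irreducible components) among destabilizing subcurves and apply the semistability inequality to the larger subcurves $D \cup F_j$, using the short exact sequence $0 \to \ov\I_{D\cup F_j} \to \ov\I_D \oplus \ov\I_{F_j} \to \mathcal{Q}_j \to 0$ with $\mathcal{Q}_j$ supported on $D\cap F_j$; an exchange/maximality argument then produces a destabilizing subcurve at which $\pol$ is integral on every connected component, contradicting generality.

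For the converse, fix a non-trivial $D$ at which $\pol$ is integral, so $\pol_{E_i}$ and $\pol_{F_j}$ are all integers. I will construct a $\pol$-semistable sheaf $\I$ that is not $\pol$-stable. Let $\nu':\rC'\to\ov\rC$ be the partial normalization of $\ov\rC$ at the singular points lying in $D\cap D^c$; then $\rC'$ is a disjoint union of curves $\wt E_i$ and $\wt F_j$ mapping to $E_i$ and $F_j$. Choose a line bundle $L$ on $\rC'$ whose restriction to each connected piece has Euler characteristic equal to the prescribed integer $\pol_{E_i}$ or $\pol_{F_j}$, and set $\I:=\nu'_*L$. The locally planar hypothesis, combined with the presentation $\wh\O_{\ov\rC,p} \cong k[[x,y]]/(f)$ at singular points, implies $\I$ is a torsion-free rank-1 sheaf on $\ov\rC$ with $\chi(\ov\I_E)=\pol_E$ for each connected component $E$ of $D$ or $D^c$; summing gives $\chi(\ov\I_D)=\pol_D$, so $\I$ is not $\pol$-stable.

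The principal obstacle is establishing $\pol$-semistability of the constructed $\I$ on \emph{every} subcurve, not just on those that are unions of the $E_i$ and $F_j$. Semistability on such aligned subcurves is immediate from additivity of Euler characteristic. For subcurves crossing the partition, a local analysis at each intersection point — made tractable by the Gorenstein presentation $\wh\O_{\ov\rC,p} \cong k[[x,y]]/(f)$ — quantifies the torsion correction in the analogous short exact sequence and reduces the required inequality to a combinatorial genericity statement about $L$. The locally planar hypothesis is used both to identify $\nu'_*L$ with a torsion-free rank-1 sheaf and to carry out this local analysis.
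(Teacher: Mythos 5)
First, note that the paper does not prove this lemma: it is quoted from \cite[Lemmas 2.14, 5.13]{MRV1}, so your proposal has to be judged against the standard argument rather than a proof in this text. In the forward direction your treatment of the connected components $E_i$ of $D$ is correct, but the step meant to give $\pol_{F_j}\in\ZZ$ for the connected components $F_j$ of $D^c$ does not work as sketched. From your sequence $0\to\ov\I_{D\cup F_j}\to\ov\I_D\oplus\ov\I_{F_j}\to\mathcal{Q}_j\to 0$ together with semistability at $D\cup F_j$ and the equality $\chi(\ov\I_D)=\pol_D$ you only obtain $\chi(\ov\I_{F_j})\ge\pol_{F_j}+\mathrm{length}(\mathcal{Q}_j)$, i.e.\ a lower bound; and maximality of $D$ only tells you that equality \emph{fails} on larger subcurves. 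Lower bounds and failed equalities cannot force the integrality of $\pol_{F_j}$, so the contradiction with generality is never reached. The missing ingredient is the complementary, subsheaf form of semistability: set $\mathcal{K}:=\ker(\ov\I\to\ov\I_D)$; equality at $D$ gives $\chi(\mathcal{K})=\chi(\ov\I)-\pol_D=\pol_{D^c}$ (with the normalization $\chi(\ov\I)=|\pol|$); since $D^c=\coprod_j F_j$ is a disjoint union, $\mathcal{K}=\bigoplus_j\mathcal{K}_j$ with $\mathcal{K}_j$ torsion-free rank one on $F_j$ and $\mathcal{K}_j\subseteq\ker(\ov\I\to\ov\I_{F_j^c})$, so semistability applied to $F_j^c$ yields $\chi(\mathcal{K}_j)\le\pol_{F_j}$; summing forces $\chi(\mathcal{K}_j)=\pol_{F_j}\in\ZZ$ for every $j$, which together with your argument on the $E_i$ gives integrality of $\pol$ at $D$. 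Some version of this upper-bound mechanism is indispensable and is absent from your proposal.

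In the converse direction the construction is incomplete exactly at the point you flag, and the proposed repair is not adequate. Taking $\I=\nu'_*L$ with $L$ a line bundle on the partial normalization whose Euler characteristic on (the preimage of) each $E_i$ and $F_j$ equals the prescribed integer does give a rank-one torsion-free sheaf with $\chi(\ov\I_D)=\pol_D$, but this uses no planarity: pushforwards along partial normalizations are torsion-free of rank one for any reduced curve, so your stated use of the hypothesis is misplaced. The real content is semistability, and it is not a matter of a ``combinatorial genericity statement about $L$'' checked locally at the separated points: for a subcurve $B$ contained in a single piece $E_i$, the inequality $\chi(\ov\I_B)\ge\pol_B$ is a balancedness condition on the whole multidegree of $L$ along the irreducible components of $E_i$, and a line bundle with the correct total $\chi$ on $E_i$ but unbalanced multidegree violates it (e.g.\ two lines with multidegree $(5,-5)$ against $\pol_{C_1}=\pol_{C_2}=1/2$). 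What is actually needed is the existence, on each connected piece, of a sheaf semistable for the restricted polarization, whose total degree is an integer precisely because $\pol$ is integral at $D$; the pushforward of the direct sum of such sheaves is then $\pol$-semistable and strictly so at $D$. That existence statement is the nontrivial input of the converse, the natural place where the locally planar hypothesis enters in \cite{MRV1}, and your proposal neither proves it nor reduces the problem to it. (A minor further point: normalizing all points of $D\cap D^c$ may disconnect an $E_i$, so the connected pieces of $\rC'$ need not biject with the $E_i$ and $F_j$; this is fixable but as written the bookkeeping is off.)
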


Fine compactified Jacobians were constructed in full generality by Esteves in \cite{est1}.

\begin{theorem}[Esteves]\label{F:Est-Jac}
Let $\rC$ be a geometrically connected curve and $\pol$ be a general polarization on $\rC$.
There exists a projective scheme $\ov{J}_{\rC}(\pol)$, called  the \emph{fine compactified Jacobian of $\rC$ with respect to the polarization $\pol$}, which is a fine moduli space for torsion-free, rank-$1$, $\pol$-semistable sheaves on $\rC$.
\end{theorem}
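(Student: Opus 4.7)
The plan is to follow Esteves' original approach. Consider the moduli stack parametrizing torsion-free rank-$1$ sheaves on $\rC$ of total Euler characteristic $|\pol|$ that are $\pol$-semistable in the sense of Definition \ref{D:sh-ss}. The starting point is Lemma \ref{L:nondeg}: for a general polarization $\pol$, every $\pol$-semistable sheaf is actually $\pol$-stable, so this stack consists entirely of simple sheaves whose only automorphisms are the scalars. Consequently it is a $\Gm$-gerbe over an algebraic space, and once we exhibit a universal family we will have realized this algebraic space as a fine moduli scheme $\ov{J}_\rC(\pol)$.

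For the construction, I would bound the family of $\pol$-semistable sheaves and embed it in a Quot scheme. After fixing a sufficiently positive line bundle $L$ on $\rC$, every $\pol$-semistable sheaf $\I$ has $H^1(\rC, \I \otimes L) = 0$ and $\I \otimes L$ is globally generated, so $\I$ arises as a quotient of $L^{-1} \otimes H^0(\rC, \I \otimes L)$. Cutting out the locally closed subscheme of the relevant Quot scheme on which the fibres are torsion-free, rank-$1$, and satisfy the inequalities of Definition \ref{D:sh-ss}, and forming the quotient by the natural $\GL$-action, produces a scheme representing the coarse moduli. To upgrade to a fine moduli space, I would use Esteves' rigidification trick: choose a $k$-point $\sigma$ of $\rC_{\rm sm}$ on a prescribed irreducible component. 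Since any torsion-free rank-$1$ sheaf is locally free at $\sigma$, imposing a trivialization of the stalk at $\sigma$ kills the $\Gm$-automorphisms, and descent of the tautological family on the Quot scheme then produces the universal sheaf.

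The main obstacle, and the technical heart of the construction, is properness (from which projectivity follows, since the quasi-projectivity is already built into the Quot-scheme presentation). By the valuative criterion, one must show that a $\pol$-stable sheaf $\I_K$ on $\rC_K$, for $K$ the fraction field of a DVR $R$, extends after finite base change to a $\pol$-stable sheaf on $\rC_R$. Existence of \emph{some} extension as a torsion-free rank-$1$ sheaf is classical; the decisive use of the genericity of $\pol$ is that among the finitely many such extensions, which differ by elementary transformations along the special fibre, the strict inequalities of Definition \ref{D:sh-ss} single out exactly one. The same argument gives separatedness: two competing $\pol$-stable extensions would differ by elementary transformations and could not both satisfy the strict inequalities, contradicting generality of $\pol$. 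I expect this last uniqueness step to be where all the combinatorics of multidegrees and of non-integrality of $\pol$ on proper subcurves is really used, and where one must invoke the locally planar hypothesis (via Lemma \ref{L:nondeg}) to ensure the equivalence of semistability and stability is stable under specialization.
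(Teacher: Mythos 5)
The paper does not actually prove this statement: it is quoted from Esteves \cite{est1}, the only comment offered being that generality of $\pol$ forces semistable $=$ stable (Lemma \ref{L:nondeg}), so the parametrized sheaves are simple and a universal family can exist. Measured against Esteves' actual construction, your sketch has the right global shape (boundedness, rigidification along a section through the smooth locus, semistable reduction for properness), but two points need attention. First, the step ``cut out the semistable locus in the Quot scheme and form the quotient by the natural $\GL$-action'' is not a formality: to obtain a scheme this way one needs GIT together with a comparison of GIT (semi)stability with the inequalities of Definition \ref{D:sh-ss}, and that comparison is a substantial piece of work (this is how the Oda--Seshadri/Simpson-type constructions proceed). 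Esteves avoids the quotient problem altogether: since stable sheaves are simple, he works inside the Altman--Kleiman algebraic space of simple sheaves, and he obtains quasi-projectivity (hence, with properness, projectivity) via theta functions rather than via GIT; so the quasi-projectivity you describe as ``already built into the Quot-scheme presentation'' is in fact one of the main things to be proved.

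Second, your closing appeal to the locally planar hypothesis is misplaced: the theorem has no planarity assumption, and the implication you actually need from Lemma \ref{L:nondeg} (general $\pol$ $\Rightarrow$ every semistable sheaf is stable) holds for arbitrary reduced curves --- planarity is only used for the converse implication. Your properness discussion (for general $\pol$, among the torsion-free extensions over the DVR, which differ by modifications along components of the special fibre, exactly one satisfies the strict inequalities) is the right idea and is indeed the heart of Esteves' semistable reduction argument, though as stated it is a plan rather than a proof.
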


Since $\pol$ is general, sheaves in $\ov{J}_\rC(\pol)$ are $\pol$-stable, hence geometrically simple, 
by Lemma \ref{L:nondeg}. This is the reason why $\ov{J}_\rC(\pol)$ is a fine moduli scheme.
Observe also that, clearly, we have that $\ov{J}_{\rC}(\pol)\otimes_k \ov k\cong \ov{J}_{\ov \rC}(\pol)$.

We denote by $J_\rC(\pol)$ the open subset of $\ov{J}_\rC(\pol)$ parametrizing
line bundles on $\rC$. Note that $J_\rC(\pol)$ is isomorphic to the disjoint union of a certain number of copies of the generalized Jacobian $J_\rC=\Pic^{\un 0}(\rC)$ of $\rC$.

If $\rC$ has locally planar singularities and $k=\ov k$, its fine compactified Jacobians enjoy the following properties (see \cite[Thm. A]{MRV1}).

\begin{theorem}\label{F:Jacpl}
Let $\rC$ be a connected curve with locally planar singularities over $k=\ov k$ and $\pol$ a general polarization on $\rC$. Then
\begin{enumerate}[(i)]
\item \label{F:Jacpl1} $\ov{J}_\rC(\pol)$ is a connected reduced projective scheme with locally complete intersection singularities and trivial dualizing sheaf.
\item \label{F:Jacpl2} $J_\rC(\pol)$ is the smooth locus of $\ov{J}_\rC(\pol)$. In particular, $J_\rC(\pol)$ is dense in $\ov{J}_\rC(\pol)$ and $\ov{J}_\rC(\pol)$ has pure dimension equal to the arithmetic genus $g(\rC)$ of $\rC$.
\item \label{F:Jacpl3} The number of irreducible components of $\ov{J}_\rC(\pol)$ depends only on the curve $\rC$ and not on the polarization $\pol$.
\end{enumerate}
\end{theorem}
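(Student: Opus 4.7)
The strategy is to embed $\rC$ in a versal deformation and use smoothness of the total space of the relative compactified Jacobian to transfer properties from the generic (smooth) fibre down to $\ov{J}_\rC(\pol)$. Concretely, fix a general polarization $\pol$ and let $\pi:\cC\to \Def(\rC)$ be the effective miniversal family of Diagram \eqref{E:eff-fam}. After possibly shrinking $\Def(\rC)$, the polarization $\pol$ extends to a relative polarization $\pol$ on $\cC/\Def(\rC)$ which remains general on every geometric fibre, and by Esteves' construction (\ref{F:Est-Jac}) one obtains a proper flat relative fine compactified Jacobian $\pi^J:\ov{\jJ}(\pol)\to \Def(\rC)$ with central fibre $\ov{J}_\rC(\pol)$. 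The key input is that the total space $\ov{\jJ}(\pol)$ is regular: this follows from Theorem \ref{nonsing_rel_compjac_fd}, since on a miniversal family the image of $d\sigma: T_0\Def(\rC)\to T\Def^{\rm loc}(\rC)$ is everything, hence generic of sufficiently large dimension.

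Once regularity of $\ov{\jJ}(\pol)$ is in hand, part (i) is largely formal. The central fibre $\ov{J}_\rC(\pol)$ is the scheme-theoretic vanishing locus of the $\dim \Def(\rC)$ functions defining $[\m_\rC]$ pulled back to $\ov{\jJ}(\pol)$; since $\ov{\jJ}(\pol)\to\Def(\rC)$ is flat and the target is regular, these form a regular sequence, giving the l.c.i.\ property. Reducedness follows from l.c.i.\ (hence Cohen--Macaulay) plus generic reducedness along the dense locus $J_\rC(\pol)$ of line bundles. Triviality of the dualizing sheaf is obtained by adjunction: the relative dualizing sheaf of $\pi^J$ is trivial on the open subscheme of line bundles (where $\pi^J$ is smooth and the fibres are torsors under generalized Jacobians, which have trivial $\omega$), and by regularity it extends as the trivial sheaf across the whole total space, which then restricts trivially to $\ov{J}_\rC(\pol)$. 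Projectivity is built into Esteves' construction, and connectedness is inherited from the connected generic fibre (a smooth Jacobian of an appropriate degree) by the Stein factorization applied to the proper flat family $\pi^J$ over the connected, normal base $\Def(\rC)$.

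Part (ii) is local on $\ov{J}_\rC(\pol)$ at a point $[\I]$, and here one uses the deformation-theoretic description of the versal deformation of the pair $(\rC,\I)$. For $\I$ a line bundle, the deformation functor is smooth and recovers the generalized Jacobian $J_\rC=\Pic^{\un0}(\rC)$, so $J_\rC(\pol)\subset \ov{J}_\rC(\pol)$ lies in the smooth locus and has dimension $h^1(\rC,\oO_\rC)=g(\rC)$. Conversely, at a torsion-free rank-$1$ sheaf $\I$ that fails to be locally free, at least one singular point $p$ of $\rC$ contributes a non-smooth local factor to the versal deformation of $(\rC,\I)$ --- by the standard local analysis of rank-$1$ torsion-free modules on a planar singularity, the local model at such $[\I]$ is a nontrivial determinantal/l.c.i.\ singularity (e.g.\ a node for the simplest case), and so $[\I]$ is not a smooth point. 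Density and purity of dimension then follow from part (i) together with the fact that $J_\rC(\pol)$ has dimension $g(\rC)$.

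Part (iii) is handled by a one-parameter deformation of the polarization. The set of general polarizations of a fixed total degree is an open dense subset of a real affine space and is the complement of a locally finite union of rational hyperplanes (the ``walls'' where some $\pol_D$ becomes integral for a non-trivial subcurve $D$). Two general polarizations in the same chamber give literally the same stability condition, hence the same moduli space. To cross a wall one constructs a family of $\pol$-semistable moduli spaces over a small interval and shows that the wall-crossing induces a birational map that does not change the number of irreducible components; this is most cleanly seen by exhibiting an étale neighborhood of the central fibre on which the moduli spaces on the two sides of the wall are related by a small modification (an isomorphism in codimension one), using the l.c.i.\ / irreducible-component structure established in (i)--(ii). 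The main obstacle throughout is the regularity statement for $\ov{\jJ}(\pol)$ over the versal base --- once that is in hand, the rest of the theorem unfolds by standard Cohen--Macaulay and deformation-theoretic arguments.
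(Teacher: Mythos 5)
Your statement is one the paper does not prove at all: it is quoted as background from \cite[Thm. A]{MRV1}, so there is no internal proof to compare with; that said, your strategy for (i)--(ii) (pass to the miniversal family, use regularity of the relative fine compactified Jacobian as in Theorem \ref{F:univJac}/\ref{nonsing_rel_compjac}, and cut the central fibre out by a regular sequence) is essentially the route taken in \cite{MRV1}. The problem is that the two hardest points are asserted rather than proved. First, in (ii) the implication ``$\I$ not locally free $\Rightarrow$ $[\I]$ is a singular point'' is not ``standard local analysis''; it needs an argument, and one is in fact available from the tools later in this paper: apply Proposition \ref{nonsing_crit} to the constant family over a point, so that $\ov{J}_\rC(\pol)$ is regular at $\I$ iff $W(\I)=T\VV$, and then use Proposition \ref{W_i} to note that $W(\I)$ is the image of $\Fit_1(\I)$, a proper ideal at any point where $\I$ fails to be free, hence a proper subspace. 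Second, the density of $J_\rC(\pol)$ does \emph{not} ``follow from part (i) together with $\dim J_\rC(\pol)=g(\rC)$'': a priori an irreducible component of $\ov{J}_\rC(\pol)$ could consist entirely of non-locally-free sheaves, and one needs a dimension bound on the boundary (via the stratification by the partial normalizations $\rC^{\I}$, as in \eqref{E:stratJ} for nodal curves). This matters doubly because your reducedness argument in (i) (CM plus generic reducedness) uses exactly this density, so as written (i) and (ii) lean on each other through an unproved step. The dualizing-sheaf step is also glossed: triviality of $\omega$ on each fibre torsor does not give triviality of $\omega_{\pi^J}$ on the family; you need that on the relatively smooth locus $\omega_{\pi^J}$ is pulled back from the base (translation invariance), that the complement of that locus has codimension $\ge 2$ in the regular total space, and then extend.

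The genuine gap is part (iii). Your wall-crossing outline assumes precisely what has to be proved: that crossing a wall ``does not change the number of irreducible components'' via a ``small modification / isomorphism in codimension one'' is unsupported, and it is exactly the delicate point, since (as the paper remarks immediately after the statement) fine compactified Jacobians of a reducible curve for different general polarizations can be non-isomorphic and even non-homeomorphic over $\CC$; moreover on a wall the moduli problem has strictly semistable objects, so there is no fine moduli space interpolating the two chambers for free. The known arguments do not proceed this way: one either identifies the set of irreducible components combinatorially (each component contains a dense torsor under $J_\rC$ indexed by a stable multidegree, and the count of these classes is shown to be a polarization-independent invariant, the complexity $c(\rC)$, cf. \cite[Sec. 5.1]{MRV1}), or, as this paper observes, one deduces equality of Betti numbers (hence of component counts) of all fine compactified Jacobians from the support theorem, Theorem \ref{thm:main}. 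As it stands, your proposal does not contain the key idea needed for (iii).
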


Therefore, the number of irreducible component of any fine compactified Jacobian of a connected curve $\rC$ with locally planar singularities over $k=\ov k$ is an invariant of $\rC$, which is usually called the complexity of $\rC$ and denoted by $c(\rC)$.
We refer the reader to \cite[Sec. 5.1]{MRV1} for an explicit formula for $c(\rC)$ in terms of the intersection numbers between the subcurves of $\rC$. We just mention that if $\rC$ is nodal, then $c(\rC)$ is given by the complexity of its dual graph, i.e. the number of its spanning trees.

The above Theorem \eqref{F:Jacpl} implies that any two fine compactified Jacobians of a curve $\rC$ with locally planar singularities over $k=\ov k$ are birational Calabi-Yau (singular) varieties.
However, in \cite[Sec. 3]{MRV1}, the authors constructed some nodal reducible curves which do have non isomorphic (and even non homeomorphic if $k=\CC$) fine compactified Jacobians. Despite this, Theorem \ref{thm:jac} implies that any two fine compactified Jacobians of a  curve $\rC$ with locally planar singularities have the same Betti numbers if $k=\CC$, recovering
in particular Theorem \ref{F:Jacpl}\eqref{F:Jacpl3}. 
It is shown in \cite{MRV2} and \cite{MRV3} that all fine compactified Jacobians are derived equivalent.

\vspace{0,2cm}
\subsubsection{Relative fine compactified Jacobians} \label{S:famJac}

Given a projective family $\pi:\C\to B$ of geometrically connected  (and geometrically reduced) curves, i.e. a projective and flat morphism $\pi$ whose geometric fiber $\cC_{\ov b}:=\pi^{-1}(b)\otimes_{k(b)} \ov{k(b)}$ over any point $b\in B$ 
is a connected (and reduced)  curve,  a \emph{relative fine compactified Jacobian} for $\pi$ is a scheme $\pi^J:\ov{J}_{\cC}\to B$ projective over $B$, 
such that the geometric fiber $(\ov{J}_{\cC})_{\ov b}:=(\pi^J)^{-1}(b)\otimes_{k(b)} \ov{k(b)}$ over any point $b\in B$ is a fine compactified Jacobian for the curve $\cC_{\ov b}$.

In the sequel, we will need the following result of the existence  of relative fine compactified Jacobians for families  of  geometrically connected (geometrically reduced and projective)  curves.

\begin{theorem}\label{T:fam-Jac}
Let $\pi:\C\to B$ be a projective family of geometrically connected curves. 
\begin{enumerate}
\item Up to passing to an \'etale cover of $B$,
there exists a relative fine compactified Jacobian $\pi^J:\ov{J}_{\cC}\to B$ for $\pi$.
\item Fix a point $b\in B$ and a general polarization $\pol$ on the  fiber
$\C_{b}$ over $b$. Then, up to replacing $B$ with an \'etale neighborhood of $b$,
there exists a family of fine compactified Jacobians $\pi^{J}:\ov{J}_{\C}(\pol)\to B$ such that $\ov{J}_{\C}(\pol)_{b}=\ov{J}_{\C_b}(\pol)$. Moreover we have (up to replacing $B$ with an open neighborhood of $b$):
\begin{enumerate}[(i)]
\item if $\C_{\ov b}$ has locally planar singularities and $B$ is geometrically unibranch (e.g. normal) and reduced at $b$, then $\pi^J$ is flat with geometric fibers of pure dimension $g(\C_{\ov b})$;
\item if $\C_{\ov b}$ has locally planar singularities and $\pi$ is versal at $b$, then $\ov{J_\C}(\pol)$ is regular.
\end{enumerate}
\end{enumerate}
\end{theorem}
\begin{proof}
The proof is similar to the one of \cite[Thm. 5.4, Thm. 5.5]{MRV1} (which deals with the effective semiuniversal deformation family of a curve $\rC$), building upon the work of Esteves \cite{est1}. We omit the details.
\end{proof}




\section{Nodal curves}\label{nodal_curves_section}

In this section we express the counting function of the Hilbert scheme of a nodal curve defined over a finite field
as a sum of trace-functions of Cattani-Kaplan-Schmid complexes. This is the most important step in the proof of Theorem \ref{thm:hilb}.

Throughout this section, we always consider the following 
\begin{setup}\label{setup}
Let  $\rC_o$ be a {\em nodal } curve defined over a finite field $k:=\FF_{\pi}$ and 
$\Gamma=\Gamma_{\rC}$ is the dual graph of $\rC=\rC_o\times_{\FF_{\pi}} \ov{\FF_\pi}$.
Let $\pi_o: \cC_o \to B_o$ be a versal family of nodal curves with central fibre the curve $\rC_o = {\cC_o}_b$ and assume, 
up to localizing at $b$,  that $B_o$ is smooth and irreducible. Denote by $\pi: \cC \to B$ the base change of the family $\pi_o$ to the algebraic closure $\ov k=\ov{\FF_\pi}$.
The discriminant locus $ \Delta $ of $\pi$ is a normal crossing divisor on $B$ which has a component ${\Delta}_e$ for each node $e$ of ${\rC}$. We set $\Breg:=B\setminus \Delta$. 

Sometimes we will need to assume that the cardinality of the base field $\FF_{\pi}$ is big enough (compared to the cogenus $\delta(\rC)$ of $\rC$), which is enough for our applications since the families $\pi_o:\C_o\to B_o$ we will be considering arise from the reduction of families defined over the complex numbers. 

\end{setup}

\subsubsection{The dual graph}\label{S:dualgr}
We write $\Gamma = \Gamma_{\rC}$ for the dual graph of the curve ${ \rC}$:
its vertices $v \in \rV$ correspond to the irreducible
components
of ${\rC}$, and its edges $e \in \rE$ correspond to the nodes of ${\rC}$. We will also be considering the set $\OE$ of oriented edges of $\Gamma$ and we will denote by  $\er$ and $\el$  the two oriented 
edges corresponding to an (unoriented) edge $e$ of $\Gamma$. 
Note that, since we do not assume $\rC_o$ geometrically connected, $\Gamma$ may be disconnected.

\smallskip
 
The Galois group $\Gal(\overline{k}/k)$, which is topologically generated by Frobenius, 
acts on the graph $\Gamma$, and in particular on the sets $\OE$ and $\rV$. The action of Frobenius on the vertex set $\rV$ corresponds to the action of Frobenius on the irreducible components of $\rC$. 
The action of Frobenius on the set $\OE$ of oriented edges is determined by the types of the nodes of $\rC_o$ as we now explain. 
A node of $\rC_o$  is identified by one integer $r$ and one ``sign" $\epsilon=\pm 1$. By this we mean that: 
\begin{enumerate}
\item 
({\bf The split case.})
$(r,+)$ is analytically isomorphic to $\mathrm{Spec}\, \FF_{\pi^r}[[X,Y]]/(X^2-Y^2)$ as a $\FF_{\pi}$ scheme, i.e. the point correspond to $r$  geometric points with rational tangents. In this case the normalization is 
$\mathrm{Spec}\, \left(\FF_{\pi^r}[[X]] \times \FF_{\pi^r}[[Y]]\right)= \mathrm{Spec}\, \left(\FF_{\pi^r}[[X]]\right)\coprod \mathrm{Spec}\, \left(\FF_{\pi^r}[[Y]]\right). $
\item
({\bf The non-split case.})
$(r,-)$ is analytically isomorphic to $\mathrm{Spec}\, \FF_{\pi^r}[[X,Y]]/(X^2-aY^2)$ as a $\FF_{\pi}$ scheme, with $a \notin \FF_{\pi^r}^2$ i.e. the point correspond to $r$  geometric points with non rational tangents (a further quadratic extension is needed). In this case the normalization is  $\mathrm{Spec}\, \left(\FF_{\pi^{2r}}[[X]]\right).$
\end{enumerate}

Frobenius  acts on the set of $2r$ oriented edges $\{\er_1, \cdots , \er_r,\el_1, \cdots , \el_r \}$ corresponding to the $r$ nodes of $\rC$ that lie over the node of $\rC_o$: 
in the first case, one can number and orient the edges so that $Fr(\er_i)=\er_{i+1}$ for $i<r$ and  $Fr(\er_r)=\er_{1}$, and similarly with the $\el_i$'s so that there are two orbits of $r$ elements each,  
whereas in the second case $Fr(\er_i)=\er_{i+1}$ for $i<r$,  $Fr(\er_r)=\el_{1}$ and $Fr(\el_i)=\el_{i+1}$, so that there is just one orbit.

We write $\VV = \VV_\Gamma := C_0(\Gamma, \Qlbar)$ and $\EE = \EE_\Gamma := C_1(\Gamma, \Qlbar)$ for the $\Gal(\overline{k}/k)$-modules of zero- and one-simplicial chains on $\Gamma$. Explicitly, $\VV$ is the $\Qlbar$-vector space
of $\Qlbar$-linear combination  of vertices of $\Gamma$ and $\EE$ is the $\Qlbar$-vector space of $\Qlbar$-linear combination of oriented edges of $\Gamma$ modulo the relation  $\er=-\el$, where $\er$ and $\el$ denote the two oriented 
edges corresponding to an (unoriented) edge $e$ of $\Gamma$. 
The actions of $\Gal(\overline{k}/k)$ on  $\VV$ and $\EE$ are induced by the action on $V$ and $\OE$ so that $\VV$ is a permutation representation while $\EE$ is only a signed permutation representation (because the Galois action can reverse the oriented edges of $\Gamma$, as explained above). The homology of the graph $\Gamma$ is defined via the following exact sequence 
\begin{equation}\label{E:seq}
 0 \to H_1(\Gamma, \Qlbar)  \to \EE  \stackrel{\partial}{\longrightarrow} \VV \to H_0(\Gamma, \Qlbar)  \to 0,
 \end{equation}
where $\partial$ is the boundary map which sends an oriented edge into the difference between its target and its source. 

We write $\VV^* = C^0(\Gamma, \Qlbar)$ and $\EE^* = C^1(\Gamma, \Qlbar)$
for the dual $\Gal(\overline{k}/k)$-modules of zero- and one-simplicial cochains on $\Gamma$. 
Since $\VV$ and $\EE$ are both  signed permutation representations, there are isomorphisms
of $\Gal(\overline{k}/k)$-modules $\EE \cong \EE^*$ and $\VV \cong \VV^*$.  The cohomology of $\Gamma$ is defined by mean of the following exact sequence 
\begin{equation}\label{E:cohseq}
0 \to H^0(\Gamma, \Qlbar) \to \VV^* \stackrel{\partial^*}{\longrightarrow} \EE^* \to H^1(\Gamma, \Qlbar) \to 0,
\end{equation}
where $\partial^*$ is the dual of the map $\partial$.

Since the 
$\Gal(\overline{k}/k)$ action on  $\EE$, $\VV$, $\EE^*$, $\VV^*$, $H_0(\Gamma, \Qlbar)$, $H_1(\Gamma, \Qlbar)$, 
$H^0(\Gamma, \Qlbar)$, $H^1(\Gamma, \Qlbar)$ factors through a finite group, all these spaces are pure of weight zero.

\subsubsection{Geometric interpretation of the cohomology of the dual graph.}\label{dual_graph}

The homology and cohomology groups of the dual graph $\Gamma$ of $C$ arise geometrically from curves related to $\rC$ by normalization and deformation.

Cohomology of the graph $\Gamma$ comes from the normalization $\nu: \rC^\nu \to  \rC$.  The sequence of sheaves
$$0 \to \Qlbar \to \nu_* \Qlbar \to \nu_* \Qlbar/ \Qlbar \to 0$$
yields by taking cohomology:
$$0 \to H^0(\rC, \Qlbar) \to H^0(\rC^\nu, \Qlbar) \to
H^0(\rC, \nu_* \Qlbar/ \Qlbar)
\to H^1(\rC, \Qlbar) \to H^1(\rC^\nu, \Qlbar)  \to 0.$$
We have defined $\VV^*, \EE^*$ so as to have canonical, 
$\Gal(\overline{k}/k)$-equivariant identifications
\begin{eqnarray*}
\VV^* & = & H^0(\rC^\nu, \Qlbar), \\
\EE^* & = & H^0(\rC, \nu_* \Qlbar/ \Qlbar) 
\end{eqnarray*}

Substituting in $H^1(\Gamma, \Qlbar) = \mathrm{Cok}(H^0(\rC^\nu, \Qlbar) \to 
H^0(\rC, \nu_* \Qlbar/ \Qlbar))$, we find the short exact sequence
\begin{equation}
0 \to H^1(\Gamma, \Qlbar) \to H^1(\rC, \Qlbar) \to H^1(\rC^\nu, \Qlbar) \to 0,
\end{equation}
which, since $H^1(\Gamma, \Qlbar)$ is pure of weight zero and $H^1(\rC^\nu, \Qlbar)$ is pure of weight one,
gives the weight filtration of $H^1(\rC, \Qlbar)$.

On the other hand, homology of the graph comes from a one-parameter smoothing $\sigma: \mathcal{C} \to \DD$ of $\rC$,  with special fibre $\mathcal{C}_0 ={\rC}$ and
geometric generic fibre $\mathcal{C}_{\overline{\eta}}$.  The cohomology of the nearby-vanishing sequence gives:

\begin{equation}\label{PsiPhi}
0 \to H^1(\rC, \Qlbar) \to H^1 ( \mathcal{C}_{\overline{\eta}}, \Qlbar) \to H^1(\rC, \Phi_\sigma \Qlbar) \to H^2(\rC, \Qlbar) \to
H^2 ( \mathcal{C}_{\overline{\eta}}, \Qlbar) \to 0.
\end{equation}
By Poincar\'e duality we have
$$
H^2(\rC, \Qlbar) = H^2(\rC^\nu, \Qlbar) \cong H^0(\rC^\nu, \Qlbar)^*\otimes \LL  = \VV \otimes \LL
$$
and, likewise
$$H^2 ( \mathcal{C}_{\overline{\eta}}, \Qlbar) \cong H^0 ( \mathcal{C}_{\overline{\eta}}, \Qlbar)^*\otimes \LL  \cong
H^0 ( \rC, \Qlbar)^* \otimes \LL.$$
Finally, we have by Picard-Lefschetz formula (\cite{Mi}, p.207):
$$
H^1(\rC, \Phi_\sigma \Qlbar) \cong \EE \otimes \LL.
$$
Substituting in formula \eqref{PsiPhi} we find:
\begin{equation}
 0 \to H^1(\rC, \Qlbar) \to H^1 ( \mathcal{C}_{\overline{\eta}}, \Qlbar) \to H_1(\Gamma, \Qlbar)\otimes \LL \to 0. 
 \end{equation}
The (monodromy-)weight filtration on $H^1 ( \mathcal{C}_{\overline{\eta}}, \Qlbar) $ is:
\begin{eqnarray*}
W_0 H^1 ( \mathcal{C}_{\overline{\eta}}, \Qlbar) & = & H^1(\Gamma, \Qlbar), \\
W_1 H^1 ( \mathcal{C}_{\overline{\eta}}, \Qlbar) & = & H^1(\rC, \Qlbar), \\
 W_2 H^1 ( \mathcal{C}_{\overline{\eta}}, \Qlbar) & = & H^1 ( \mathcal{C}_{\overline{\eta}}, \Qlbar),
\end{eqnarray*}
with associated graded pieces
\begin{eqnarray*}
Gr^W_0 H^1 ( \mathcal{C}_{\overline{\eta}}, \Qlbar) & = & H^1(\Gamma, \Qlbar), \\
Gr^W_1 H^1 ( \mathcal{C}_{\overline{\eta}}, \Qlbar) & = & H^1(\rC^\nu, \Qlbar), \\
 Gr^W_2 H^1 ( \mathcal{C}_{\overline{\eta}}, \Qlbar) & = & H_1 (\Gamma, \Qlbar)\otimes \LL.
\end{eqnarray*}

\subsubsection{Subgraphs and partial normalizations.}\label{S:subgr}

For every subset $I \subset \rE$, we factor the normalization map
\[
\rC^\nu \stackrel{\nu^I}{\to} \rC^I \stackrel{\nu_I}{\to} \rC
\]
where $\nu_I: \rC^I \to \rC$ is the partial normalization of the nodes of the subset $I$,
and $\nu^I: \rC^\nu \to \rC^I$ for the remaining normalization.

 We have sequences
$$0 \to H^0(\rC, \Qlbar) \to H^0(\rC^I, \Qlbar) \to
H^0(\rC, \nu_{I*} \Qlbar/ \Qlbar)
\to H^1(\rC, \Qlbar) \to H^1(\rC^I, \Qlbar)  \to 0$$
and
$$0 \to H^0({\rC}^I, \Qlbar) \to H^0(\rC^\nu, \Qlbar) \to
H^0({\rC}^I, \nu^I_{*} \Qlbar/ \Qlbar)
\to H^1({\rC}^I, \Qlbar) \to H^1({\rC}^\nu, \Qlbar)  \to 0.$$
The dual graph of the partial normalization ${\rC}^I$ is the graph $\Gamma\setminus I$, which is obtained from $\Gamma=\Gamma_{\rC}$ by deleting the edges corresponding to $I$.
As in \S\ref{dual_graph}, we have canonical identifications $\EE_{\Gamma \setminus I}^*= H^0({\rC}^I, \nu^I_{*} \Qlbar/ \Qlbar)$ and
$\VV^*_{\Gamma \setminus I}=H^0({\rC}^\nu, \Qlbar)=\VV^*_\Gamma=\VV^*$.
Moreover, we set $\EE_I^* = H^0({\rC}, \nu_{I*} \Qlbar/ \Qlbar)$ so that we
have a canonical splitting $\EE^*=\EE_\Gamma^* = \EE_{\Gamma \setminus I}^* \oplus \EE_I^*$.

We now introduce a collection of subsets of $\rE$ which will play an important role in what follows.

\begin{definition}\label{D:defC}
We write $\mathscr{C}(\Gamma)$ for the collection of subsets of $\rE$ whose removal disconnects no component
of $\Gamma$, i.e. a subset $I\subseteq \rE$ belongs to $\mathscr{C}(\Gamma)$ if and only if $\Gamma\setminus I$ has the same number of connected components of $\Gamma$.

 We set $n_i(\Gamma) := \# \{I \in \mathscr{C}(\Gamma)\, | \, \dim H_1(\Gamma \setminus I) = i \}$.
\end{definition}

Note that $n_0(\Gamma)$, i.e. the cardinality of the set of maximal elements of $\mathscr{C}(\Gamma)$, is also equal to the complexity $c(\Gamma)$ of $\Gamma$, i.e. the number of spanning forests of $\Gamma$.

An alternative characterization of the elements of $\mathscr{C}(\Gamma)$ is provided by the following

\begin{lemma}\label{L:setC}
A subset $I\subseteq \rE$ belongs to  $\mathscr{C}(\Gamma)$ iff the composition $\EE_I^*\to \EE^* \to H^1(\Gamma, \Qlbar)$ is injective.  In that case,
the following sequence is exact:
$$0 \to \EE_I^* \to H^1(\Gamma, \Qlbar) \to H^1(\Gamma \setminus I, \Qlbar) \to 0.$$
\end{lemma}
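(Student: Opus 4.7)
The plan is to derive both statements simultaneously from a single long exact sequence arising from the natural map of cochain complexes $C^\bullet(\Gamma) \to C^\bullet(\Gamma \setminus I)$.

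First, I would set up the comparison of cochain complexes. Using the canonical splitting $\EE^* = \EE^*_{\Gamma \setminus I} \oplus \EE_I^*$ and the identification $\VV^*_{\Gamma \setminus I} = \VV^*$, the projection onto the first summand together with the identity on $\VV^*$ defines a surjective map of $2$-term complexes $C^\bullet(\Gamma) \twoheadrightarrow C^\bullet(\Gamma \setminus I)$. Its kernel is the complex $K^\bullet$ with $K^0 = 0$ and $K^1 = \EE_I^*$, so $H^0(K^\bullet) = 0$ and $H^1(K^\bullet) = \EE_I^*$. The associated long exact sequence of cohomology reads
\begin{equation*}
0 \to H^0(\Gamma, \Qlbar) \to H^0(\Gamma\setminus I, \Qlbar) \to \EE_I^* \xrightarrow{\alpha} H^1(\Gamma, \Qlbar) \to H^1(\Gamma\setminus I, \Qlbar) \to 0,
\end{equation*}
and it is straightforward to check, unwinding the snake lemma, that $\alpha$ agrees with the composition $\EE_I^* \hookrightarrow \EE^* \twoheadrightarrow H^1(\Gamma, \Qlbar)$ appearing in the lemma.

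Next, I would read off the two assertions. The map $\alpha$ is injective if and only if the preceding arrow $H^0(\Gamma\setminus I, \Qlbar) \to \EE_I^*$ vanishes, which happens exactly when $H^0(\Gamma, \Qlbar) \to H^0(\Gamma\setminus I, \Qlbar)$ is an isomorphism. Since the dimensions of $H^0$ count connected components, this is equivalent to $\Gamma\setminus I$ having the same number of connected components as $\Gamma$, i.e. $I \in \mathscr{C}(\Gamma)$. Under this assumption the long exact sequence truncates to the claimed four-term exact sequence
\begin{equation*}
0 \to \EE_I^* \to H^1(\Gamma, \Qlbar) \to H^1(\Gamma \setminus I, \Qlbar) \to 0.
\end{equation*}

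There is no real obstacle here beyond bookkeeping; the only mild subtlety is verifying that the connecting homomorphism in the long exact sequence really is the composition $\EE_I^* \hookrightarrow \EE^* \to H^1(\Gamma, \Qlbar)$ described in the lemma, which is immediate from the definition of $\partial^*$ and the splitting of $\EE^*$. Everything is Galois-equivariant since the splitting $\EE^* = \EE^*_{\Gamma\setminus I} \oplus \EE_I^*$ is preserved by $\Gal(\overline k/k)$ when $I$ is Galois-stable, though this compatibility is not needed for the statement as phrased.
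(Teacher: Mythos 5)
Your proposal is correct and is essentially the paper's own argument: the paper applies the snake lemma to the map from the four-term cohomology sequence \eqref{E:cohseq} for $\Gamma$ to the analogous sequence for $\Gamma\setminus I$ (using $\VV^*_{\Gamma\setminus I}=\VV^*_\Gamma$), obtaining exactly your five-term exact sequence and then reading off both claims the same way. Your packaging via the short exact sequence of cochain complexes with kernel $\EE_I^*$ in degree one is just a reformulation of that snake-lemma step, so there is no substantive difference.
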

\begin{proof}
The inclusion of graphs $\Gamma\setminus I\hookrightarrow \Gamma$ induces a pull-back map from the sequence \eqref{E:cohseq} to the analogous sequence for $\Gamma\setminus I$.
Applying the snake lemma to this map of sequences and using that $\VV_{\Gamma\setminus I}^*=\VV_{\Gamma}^*$, we get the exact sequence
$$0\to H^0(\Gamma,\Qlbar)\to H^0(\Gamma\setminus I, \Qlbar) \to \EE_I^*\to H^1(\Gamma,\Qlbar)\to H^1(\Gamma\setminus I,\Qlbar) \to 0.$$
By Definition \ref{D:defC}, the subset $I$ belongs to $\mathscr{C}(\Gamma)$ if and only if the map $H^0(\Gamma,\Qlbar)\to H^0(\Gamma\setminus I, \Qlbar)$ is an isomorphism.
By the above exact sequence, this happens precisely when the map $\EE_I^*\to H^1(\Gamma, \Qlbar)$ is injective and in that case we get the required short exact sequence.
\end{proof}

\begin{remark}
It follows from Lemma \ref{L:setC} that  $\mathscr{C}(\Gamma)$ is the collection of all subsets of $\rE$ whose images under the map
$\EE^* \to H^1(\Gamma,\Qlbar)$ remain linearly independent.
Thus $\mathscr{C}(\Gamma)$ is the collection of independent elements of a (representable)
matroid -- in particular, a simplicial complex -- which is usually called the cographic  matroid of the graph $\Gamma$.
\end{remark}

Fixing orientations of each edge $e \in \rE$ of $\Gamma$ and an ordering on $\rE$
determines, for all $I \subset \rE$, `volume' elements $e_I^* \in \wedge^{|I|} \EE_I^*$, well defined up to a sign. 
Lemma \ref{L:setC} may be reformulated as the assertion that $I \in  \mathscr{C}(\Gamma)$ if and only if the image of $e_I^*$ in $\wedge^{|I|} H^1(\Gamma,\Qlbar)$ is non-zero. Indeed, even more is true as the following Lemma shows.

\begin{lemma}\label{L:mapeI}
If $I \in \mathscr{C}(\Gamma)$, there is an injective map, well-defined up to a sign,
$$\wedge e_I^* : \bigwedge^{i - |I|} H^1(\Gamma \setminus I, \Qlbar) \to \bigwedge^i  H^1(\Gamma, \Qlbar).$$
\end{lemma}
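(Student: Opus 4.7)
The plan is to read the map off from the short exact sequence provided by Lemma~\ref{L:setC},
\[
0 \to \EE_I^* \to H^1(\Gamma,\Qlbar) \to H^1(\Gamma\setminus I,\Qlbar) \to 0,
\]
and to use the elementary fact that wedging with the top class of a subspace gives a well--defined map on the quotient.

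First I would note that since $\dim_{\Qlbar}\EE_I^*=|I|$, the space $\wedge^{|I|}\EE_I^*$ is one--dimensional and is generated (up to a sign) by the volume element $e_I^*$. Viewing $\EE_I^*\hookrightarrow H^1(\Gamma,\Qlbar)$, we may regard $e_I^*$ as a class in $\wedge^{|I|} H^1(\Gamma,\Qlbar)$ and consider the exterior--product map
\[
e_I^*\wedge(-)\colon \wedge^{i-|I|} H^1(\Gamma,\Qlbar)\longrightarrow \wedge^i H^1(\Gamma,\Qlbar).
\]
I claim this factors through the projection $\wedge^{i-|I|}H^1(\Gamma,\Qlbar)\twoheadrightarrow \wedge^{i-|I|}H^1(\Gamma\setminus I,\Qlbar)$. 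Indeed, the kernel of this projection is generated by decomposable elements having at least one factor from $\EE_I^*$; wedging such an element with $e_I^*$ produces an expression lying in $\wedge^{|I|+1}\EE_I^*\otimes\wedge^{i-|I|-1}H^1(\Gamma,\Qlbar)=0$, since $\EE_I^*$ has dimension only $|I|$. Hence $e_I^*\wedge(-)$ descends to the asserted map
\[
\wedge e_I^*\colon \wedge^{i-|I|} H^1(\Gamma\setminus I,\Qlbar)\longrightarrow \wedge^i H^1(\Gamma,\Qlbar),
\]
which is defined up to the sign ambiguity inherent in $e_I^*$.

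For injectivity I would choose a $\Qlbar$--linear splitting of the sequence from Lemma~\ref{L:setC}, producing an isomorphism $H^1(\Gamma,\Qlbar)\cong \EE_I^*\oplus H^1(\Gamma\setminus I,\Qlbar)$. Then
\[
\wedge^i H^1(\Gamma,\Qlbar)\;\cong\;\bigoplus_{j+k=i}\wedge^j\EE_I^*\otimes\wedge^k H^1(\Gamma\setminus I,\Qlbar),
\]
and the map $\wedge e_I^*$ lands in the single summand with $j=|I|$, $k=i-|I|$, namely $\Qlbar\cdot e_I^*\otimes\wedge^{i-|I|}H^1(\Gamma\setminus I,\Qlbar)$. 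On that summand it is, by construction, the identity (tensor the isomorphism $\Qlbar\iso \wedge^{|I|}\EE_I^*$), hence an isomorphism onto its image, and in particular injective. The only subtle point — that the construction is independent of the splitting — is taken care of at the previous step, since the map was already shown to be well defined before any splitting was chosen; the splitting is used only to verify injectivity. This is really the only step requiring care, and it reduces to the dimension count $\dim\EE_I^*=|I|$ which kills the ambiguity.
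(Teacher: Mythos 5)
Your proposal is correct and follows essentially the same route as the paper: define the map by lifting along the short exact sequence of Lemma \ref{L:setC} and wedging with $e_I^*$, with well-definedness coming from the fact that any ambiguity involves a factor in $\EE_I^*$ and is therefore killed because $\wedge^{|I|+1}\EE_I^*=0$. The only difference is that you also spell out the injectivity via a choice of splitting, a point the paper asserts but leaves implicit, and your verification of it is fine.
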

\begin{proof}
The map is defined by lifting $\eta \in \bigwedge^{i - |I|} H^1(\Gamma \setminus I, \Qlbar)$ arbitrarily
to an element in $\bigwedge^{i - |I|} H^1(\Gamma,  \Qlbar),$ and then wedging by $e_I^*$.  This
is well defined because the ambiguity in the lift is killed by $\wedge e_I^*$.
\end{proof}

\subsection{Determination of $IC(\Lambda^i R^1 \pi_{sm *} \Qlbar )$} \label{sec:CKS}

In the setup \ref{setup}, consider  the  local system ${\mathscr V}^1:=R^1\pi_*{\Qlbar}_{|\Breg}$ on $\Breg$, which, defines (see \S \ref{sec:CKS_preliminaries}) a local system ${\mathscr V}^1_{\bigcap_e \Delta_e}:=\Psi({\mathscr V}^1)$ on $\bigcap_e \Delta_e \ni b$, endowed with $|E|$ commuting
twisted nilpotent endomorphisms
$$N_e : {\mathscr V}^1_{\bigcap_e \Delta_e}\to {\mathscr V}^1_{\bigcap_e \Delta_e}\otimes \LL .$$ 
We also have the local systems ${\mathscr V}^i:=\bigwedge^i {\mathscr V}^1$, and corresponding sheaves ${\mathscr V}^i_{\bigcap_e \Delta_e}:=\Psi({\mathscr V}^i)$ on $\bigcap_e \Delta_e$,
endowed with commuting twisted nilpotent endomorphisms
\[
N_e^{(i)}: {\mathscr V}^i_{\bigcap_e \Delta_e} \to {\mathscr V}^i_{\bigcap_e \Delta_e}\otimes \LL.
\]
It is known that the local system ${\mathscr V}^1$, and therefore also its exterior powers  ${\mathscr V}^i$, are tamely ramified \cite[Thm. 1.5]{Ab}.
As we are interested in pointwise computations, we may consider a normal slice so we assume
$\bigcap_e \Delta_e=\{b\}$ and identify ${\mathscr V}^1_{\bigcap_e \Delta_e}\cong H^1(\mathcal{C}_{\overline{\eta}},\Qlbar)$, where $\mathcal{C}_{\overline{\eta}}$ is a geometric generic fiber of a one-parameter smoothing of $\rC$. Remark that the monodromy filtration is independent of the one-parameter  smoothing that we choose as it coincides with
the weight filtration. The monodromy-weight filtration of ${\mathscr V}^1_{\bigcap_e \Delta_e}$ is hence identified with that of $H^1(\mathcal{C}_{\overline{\eta}},\Qlbar)$ described in \S \ref{dual_graph}.

It follows immediately from weights considerations that the map
$$N_e: H^1(\mathcal{C}_{\overline{\eta}},\Qlbar) \to H^1(\mathcal{C}_{\overline{\eta}},\Qlbar)\otimes \LL $$
factors as
\begin{equation}\label{E:facto}
H_1 (\Gamma, \Qlbar)\otimes \LL= Gr^W_2 H^1 ( \mathcal{C}_{\overline{\eta}}, \Qlbar)  \to    Gr^W_0 H^1 ( \mathcal{C}_{\overline{\eta}}, \Qlbar)\otimes \LL =  H^1(\Gamma, \Qlbar)\otimes \LL,
\end{equation}
and it is easily seen to be given by
\begin{equation}\label{E:mapNe}
H_1(\Gamma, \Qlbar) \hookrightarrow \EE \xrightarrow{t \mapsto \langle \vec{e}^*, t \rangle \cdot \vec{e}^*} \EE^*  \twoheadrightarrow H^1(\Gamma, \Qlbar)
\end{equation}
where $\vec{e}$ is an orientation of the edge $e$ and $\vec{e}^*$ is its dual element in  $\EE^*$ (note that the above is independent of the orientation of $e$).
Similarly, for the exterior powers, we have the identification ${\mathscr V}^i_{\bigcap_e \Delta_e}\cong \bigwedge^i H^1(\mathcal{C}_{\overline{\eta}},\Qlbar)$ under which the operators $N_e^{(i)}$ become
\begin{eqnarray*}
N_e^{(i)}= \textstyle\bigwedge^i H^1(\mathcal{C}_{\overline{\eta}}, \Qlbar)
& \to & \textstyle\bigwedge^i H^1(\mathcal{C}_{\overline{\eta}}, \Qlbar)\otimes \LL \\
c_1 \wedge \cdots \wedge c_i & \mapsto & \sum_{k=1}^i c_1 \wedge \cdots \wedge N_e(c_k) \wedge \cdots \wedge c_i .
\end{eqnarray*}

For $I \subset \rE$ we write
$$N_I^{(i)} := \prod_{e \in I} N_e^{(i)}: \textstyle\bigwedge^i H^1(\mathcal{C}_{\overline{\eta}}, \Qlbar) \to \textstyle\bigwedge^i H^1(\mathcal{C}_{\overline{\eta}}, \Qlbar)\otimes \LL^{|I|}.$$

The stalk of $IC(\mathscr{V}^i)$ at $\{b\}=\bigcap_e \Delta_e$ is quasi-isomorphic to the following
complex of continuous $\Qlbar$-representations of $\Gal(\overline{k}/k)$:
\begin{equation}
\label{ckscplx}
 0 \to  \textstyle\bigwedge^i H^1(\mathcal{C}_{\overline{\eta}}, \Qlbar) \to \bigoplus_{I \subseteq E, |I|=1} \im N_I^{(i)} \to \bigoplus_{I \subseteq E, |I|=2} \im N_I^{(i)} \to \cdots
\end{equation}

where the first term
$\bigwedge^i H^1(\mathcal{C}_{\overline{\eta}}, \Qlbar)$ is in homological degree zero. 
Omitting, for brevity of notation, to indicate the nilpotent endomorphisms, we denote this complex by ${\mathbf C}^{\bullet}(\bigwedge^i H^1(\mathcal{C}_{\overline{\eta}}, \Qlbar))$.

We also define operators by restricting the above to the even weight pieces of the associated graded pieces,
$H^1(\mathcal{C}_{\overline{\eta}}, \Qlbar)_{ev} := H^1(\Gamma, \Qlbar) \oplus H_1 (\Gamma, \Qlbar) \otimes \LL$, i.e.,
$$\hat{N}_e: H^1(\Gamma, \Qlbar) \oplus H_1 (\Gamma, \Qlbar) \otimes \LL \to
H^1(\Gamma, \Qlbar) \otimes \LL \oplus H_1 (\Gamma, \Qlbar) \otimes \LL^2,$$
and similarly for the operators $\hat{N}_e^{(i)}$ and  $\hat{N}_I^{(i)}$.

We want now to describe the image of the maps $\hat{N}_I^{(i)}$. Recall from Lemma \ref{L:mapeI} that if $I \in \mathscr{C}(\Gamma)$, then there is an injective map $\wedge e_I^*: \bigwedge^{i-|I|} H^1(\Gamma \setminus I, \Qlbar) \to \bigwedge^i H^1(\Gamma, \Qlbar)$. Using the natural injection $H_1(\Gamma\setminus I, \Qlbar) \hookrightarrow H_1(\Gamma, \Qlbar)$ coming from the inclusion of graphs $\Gamma\setminus I\subset \Gamma$, we get an injective map
\begin{equation}\label{E:wedge}
\wedge e_I^*: \textstyle\bigwedge^{i - |I|} \left( H^1(\Gamma \setminus I, \Qlbar) \oplus H_1(\Gamma \setminus I, \Qlbar)\otimes \LL \right) \hookrightarrow \textstyle\bigwedge^i
\left( H^1(\Gamma, \Qlbar) \oplus H_1(\Gamma, \Qlbar) \otimes \LL \right).
\end{equation}

\begin{lemma} {\bf (the main calculation)}\label{L:main}
The image of $\hat{N}_I^{(i)}$ is zero unless $I \in \mathscr{C}(\Gamma)$, and in this case, it is equal to to the image of the map \eqref{E:wedge} twisted by $\LL^{|I|}$.
\end{lemma}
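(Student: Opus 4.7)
The plan is a direct Leibniz-rule computation for the commuting even derivations $\hat N_e^{(i)}$ on $\bigwedge^\bullet V$, with $V := H^1(\mathcal{C}_{\overline\eta},\Qlbar)_{ev}$. The factorization \eqref{E:facto} says that each $\hat N_e$ kills $H^1(\Gamma,\Qlbar)$ and has image in $H^1(\Gamma,\Qlbar)\otimes\LL$, which is itself killed by every $\hat N_f$; hence $\hat N_e\hat N_f = 0$ on single factors, and the Leibniz expansion of $\hat N_I^{(i)}$ retains only terms where the $|I|$ derivations act on distinct factors:
\begin{equation*}
\hat N_I^{(i)}(v_1 \wedge \cdots \wedge v_i) \;=\; \sum_{\phi\colon I \hookrightarrow \{1,\ldots,i\}} \pm \prod_{a\in I} \langle \vec e_a^*, v_{\phi(a)}\rangle \cdot e_I^* \wedge \!\!\bigwedge_{j\notin\Img\phi}\!\! v_j,
\end{equation*}
where $e_I^* = \vec e_1^* \wedge \cdots \wedge \vec e_{|I|}^*$ factors out of the entire image.

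If $I \notin \mathscr{C}(\Gamma)$, then by Lemma \ref{L:setC} the class $e_I^*$ already vanishes in $\bigwedge^{|I|}H^1(\Gamma,\Qlbar)$, so $\hat N_I^{(i)} \equiv 0$.

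Assume $I \in \mathscr{C}(\Gamma)$. The $\vec e_a^*$'s remain linearly independent in $H^1(\Gamma,\Qlbar) = H_1(\Gamma,\Qlbar)^\vee$, so one may choose $w_1,\ldots,w_{|I|} \in H_1(\Gamma,\Qlbar)$ with $\langle \vec e_a^*, w_b\rangle = \delta_{ab}$; by a dimension count using Lemma \ref{L:setC}, their common annihilator in $H_1(\Gamma,\Qlbar)$ is exactly $H_1(\Gamma \setminus I,\Qlbar)$, yielding a direct-sum splitting $H_1(\Gamma,\Qlbar) = H_1(\Gamma \setminus I, \Qlbar) \oplus \mathrm{span}(w_1,\ldots,w_{|I|})$. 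For the inclusion ``$\supseteq$'': for a pure wedge $\tilde\eta = v_{|I|+1} \wedge \cdots \wedge v_i$ in $\bigwedge^{i-|I|}(H^1(\Gamma\setminus I,\Qlbar) \oplus H_1(\Gamma \setminus I,\Qlbar)\otimes\LL)$, each $v_j$ is killed by every $\vec e_a^*$; hence in the formula above applied to $w_1 \wedge \cdots \wedge w_{|I|} \wedge \tilde\eta$ only the standard injection $\phi(a) = a$ contributes, yielding $\pm e_I^* \wedge \tilde\eta$.

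For the inclusion ``$\subseteq$'' we exploit cross-term cancellations among the different $\phi$. Concretely, the whole identity can be rewritten as $\hat N_I^{(i)}(-) = \pm e_I^* \wedge \iota_{e_I^*}(-)$, where $\iota_{e_I^*}$ denotes the interior product against $e_I^*$, acting trivially on the $H^1(\Gamma,\Qlbar)$-summand of $V$ and by the natural contraction on $H_1(\Gamma,\Qlbar)\otimes\LL$; a direct check then shows that $e_I^* \wedge \iota_{e_I^*}$ lands in $e_I^* \wedge \bigwedge^{i-|I|}(H^1(\Gamma,\Qlbar) \oplus H_1(\Gamma\setminus I,\Qlbar)\otimes\LL)$, the $w_b$-contributions of the un-touched factors being canceled by the cross terms in which $\phi$ hits those positions. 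Finally, wedging with $e_I^*$ kills the $\EE_I^*$-components of any remaining $H^1(\Gamma,\Qlbar)$-factors, which by the short exact sequence of Lemma \ref{L:setC} reduces this summand to $H^1(\Gamma\setminus I,\Qlbar)$; this identifies the image of $\hat N_I^{(i)}$ with the image of \eqref{E:wedge} tensored with $\LL^{|I|}$. The main obstacle will be the sign bookkeeping in the cancellation for ``$\subseteq$'', which becomes a routine exterior-algebra calculation once the adapted basis $\{w_a\} \cup H_1(\Gamma \setminus I, \Qlbar)$ is fixed.
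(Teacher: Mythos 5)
Your proposal is correct, and it organizes the computation differently from the paper. The paper reduces by induction on $|I|$ to the single-edge case, using compatibility of $N_e$ with the complex of the subgraph $\Gamma\setminus e'$, and then computes $\hat N_e^{(i)}$ explicitly on monomials in the dual bases $\{\underline{e}_j\}\subset H_1(\Gamma,\Qlbar)$, $\{\vec e_j^*\}\subset H^1(\Gamma,\Qlbar)$ attached to a spanning forest through $e$; the vanishing statement appears there as the observation that no cycle passes through a disconnecting edge. You instead treat the whole composite at once: since each $\hat N_e$ is the even derivation attached to the rank-one map $t\mapsto\langle\vec e^*,t\rangle\vec e^*$ and all the images are annihilated by all the operators, one gets $\hat N_I^{(i)}=\pm\, e_I^*\wedge\iota_{e_I^*}$, so the vanishing for $I\notin\mathscr{C}(\Gamma)$ is exactly the vanishing of $e_I^*$ in $\bigwedge^{|I|}H^1(\Gamma,\Qlbar)$ (the reformulation of Lemma \ref{L:setC}), and for $I\in\mathscr{C}(\Gamma)$ the adapted splitting $H_1(\Gamma,\Qlbar)=H_1(\Gamma\setminus I,\Qlbar)\oplus\mathrm{span}(w_1,\ldots,w_{|I|})$ identifies the image with that of \eqref{E:wedge} twisted by $\LL^{|I|}$. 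This buys a uniform, induction-free argument in which both halves of the lemma come from one identity; moreover, in your adapted basis the inclusion ``$\subseteq$'' is immediate and sign-free at the level of images, because $\iota_{e_I^*}$ annihilates any monomial missing some $w_b$ and strips off all of them otherwise, which is a cleaner way to state the ``cross-term cancellation'' you gesture at, and the residual $\EE_I^*$-ambiguity in the $H^1$-factors is killed by wedging with $e_I^*$ exactly as you say. What the paper's version buys is that the computation is phrased in the spanning-forest/loop bases that match the graph combinatorics used in the rest of Section \ref{sec:CKS} (e.g.\ in Theorem \ref{thm:nodalICcalc}), and the single-edge reduction makes the compatibility with passing to subgraphs explicit. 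I see no gap in your argument; the only point to make precise in writing it up is the (image-irrelevant) overall sign in $\hat N_I^{(i)}=\pm\, e_I^*\wedge\iota_{e_I^*}$, which you already flag.
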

\begin{proof}
We recall how the choice of a spanning forest of $\Gamma$ (i.e. a spanning tree on each connected component of $\Gamma$)
gives rise to dual bases for $H_1(\Gamma):=H_1(\Gamma, \Qlbar)$ and $H^1(\Gamma):=H^1(\Gamma, \Qlbar)$.
Let $J \subseteq \rE$ be a maximal element of $\mathscr{C}(\Gamma)$ so that $\Gamma \setminus J$ is a spanning forest of $\Gamma$.
Then on one hand, for each $e \in J$, we have
the corresponding $\vec{e}^* \in \EE^*$, and their images in $H^1(\Gamma)$ give a basis.  On the other hand, for each $e\in J$,
there is unique loop in $\Gamma \setminus (J \setminus e)$ which gives rise to an element of $H_1(\Gamma)$ denoted by
 $\underline{e}$; this again gives a basis.  We have $\langle \vec{e}_i^{*}, \underline{e}_j \rangle = \pm \delta_{ij}$ for each $e_i,e_j \in E$. 

We return to the problem at hand.
By induction on $|I|$ and the obvious compatibility of $N_e$ with the analogous operator on the complex associated to a subgraph
$\Gamma \setminus e'$, it suffices to consider the case when $I = \{e\}$.
Let $\Gamma_e$ be the component of $\Gamma$ containing $e$.
If the removal of the edge $e$ disconnects $\Gamma_e$, then certainly no cycle $t \in H_1(\Gamma)$ can
contain the edge $e$, hence $\langle \vec{e}^*, t \rangle = 0$ for any $t$, and so $N_e \equiv 0$.

Otherwise, there exists some maximal $e \in J \in \mathscr{C}(\Gamma)$. Let
$\{\underline{e} = \underline{e}_1, \underline{e}_2, \ldots\}$ and
$\{\vec{e}^* = \vec{e}_1^*, \vec{e}_2^*, \ldots\}$ be the corresponding dual bases.
Observe that $J \setminus e \in \mathscr{C}(\Gamma \setminus e)$ is again maximal, and the resulting dual
basis of $H_1(\Gamma \setminus e)$ and $H^1(\Gamma \setminus e)$ are
$\{\underline{e}_2, \ldots\}$ and $\{\vec{e}_2^*, \ldots\}$.

We compute the action of $\hat{N}_e^{(i)}$:
\begin{eqnarray*}
& & \hat{N}_e^{(i)}(\vec{e}_{a_1}^* \wedge \cdots \wedge \vec{e}_{a_{d}}^* \wedge
\underline{e}_{b_1} \wedge \cdots \wedge \underline{e}_{b_{i-d}} ) \\ & = &
\sum_{r = 1}^{i-d} \vec{e}_{a_1}^* \wedge \cdots \wedge \vec{e}_{a_{d}}^* \wedge
\underline{e}_{b_1} \wedge \cdots \wedge \hat{N}_e(\underline{e}_{b_r}) \wedge \cdots \wedge \underline{e}_{b_{i-d}}
\\
& = & \left( \sum_{r = 1}^{i-d} \pm \delta_{1, b_r} \cdot \vec{e}_{a_1}^* \wedge \cdots \wedge \vec{e}_{a_{d}}^* \wedge
\underline{e}_{b_1} \wedge \cdots \wedge \hat{\underline{e}}_{b_r}\wedge \cdots \wedge \underline{e}_{b_{i-d}} \right) \wedge \vec{e}^*
\end{eqnarray*}
If any of the $a_i = 1$, then this sum vanishes.  In any case, the sum has at most one nonvanishing term, that of $b_r = 1$.
Assuming without loss of generality that $a_1 < a_2 < \cdots$ and $b_1 < b_2 < \cdots$, the sum vanishes unless
$a_1 > 1$ and $b_1 = 1$; and
$$\hat{N}_e^{(i)}(\vec{e}_{a_1 > 1}^* \wedge \cdots \wedge \vec{e}_{a_{d}}^* \wedge
\underline{e} \wedge \underline{e}_{b_2 > 1} \wedge \cdots \wedge \underline{e}_{b_{i-d}}) =
\pm \vec{e}^* \wedge \left( \vec{e}_{a_1 > 1}^* \wedge \cdots \wedge \vec{e}_{a_{d}}^* \wedge \underline{e}_{b_2 > 1} \wedge \cdots \wedge \underline{e}_{b_{i-d}}
\right) $$
This completes the proof.
\end{proof}

\begin{remark}\label{R:vanNI}
In particular, if $i< |I|$ or $h^1(\Gamma)<|I|$ then $\hat{N}_I^{(i)}$ vanishes. This is true also for the map 
$$
N_I^{(i)}: \textstyle\bigwedge^i H^1(\mathcal{C}_{\overline{\eta}}, \Qlbar) \to \textstyle\bigwedge^i H^1(\mathcal{C}_{\overline{\eta}}, \Qlbar)\otimes \LL^{|I|}.
$$
Indeed, if $i< |I|$ then $N_I^{(i)}$ vanishes because the weights of the source go from $0$ to $2i$  while those of the target from $2|I|$ to $2|I| + 2i$. Moreover, if $h^1(\Gamma)<|I|$ then the map $N_I^{(i)}$ vanishes  because
of the factorization \eqref{E:facto}.
\end{remark}

The next corollary follows directly from the previous considerations:

\begin{corollary} \label{cor:weightpolyic}
We have the following evaluations of weight polynomials:
\begin{equation*}\label{weightcks}
\mathfrak{w}\left( \sum_i q^i IC\left(\textstyle\bigwedge^i R^1 \pi_*{\Qlbar}_{|\Breg}\right)_b[-i]\right)
 =
(1+qt)^{{2 g(\ov{\rC}^\nu)}}
\sum_{i\geq 0} n_i (\Gamma) \cdot  (q t^2)^{h^1(\Gamma) - i} \left((1-q t^2) (1-q) \right)^{i}.
\end{equation*}
In particular, setting $q=1$, we get that 
\begin{equation}\label{weight_cks} \mathfrak{w}\left( \sum_i IC\left(\textstyle\bigwedge^i R^1\pi_*{\Qlbar}_{|\Breg}\right)_b[-i]\right) =
(1 + t)^{2 g(\ov{\rC}^\nu)} t^{2  h^1(\Gamma)} c(\Gamma).
\end{equation}
\end{corollary}

\subsection{The Hilbert scheme of a nodal curve} \label{S:hilbert_nodal} 



In this subsection, we will be using the following

\begin{notation}
\noindent 
\begin{enumerate}
\item 
Equalities in this section are in the counting sense, as we now explain. To any element $\sum_i \lambda_i X_{o,i}$ of the Grothendieck ring $K_0(\Var_{\FF_{\pi}})$ of varieties over $\FF_{\pi}$, it is associated the counting function
$$r\in \bbN \mapsto \sum_i \lambda_i X_{o,i}(\FF_{\pi^r}).$$
And to any element $\sum_i \lambda_i W_i$ of the $K$-ring $K_0(\Rep(\Fr))$  of (finite dimensional) $\Qlbar$-vector spaces with an action of Frobenius $\Fr$, it is associated the counting function 
$$r\in \bbN\mapsto \sum_i \lambda_i \Tr(\Fr^r: W^i \to W^i).$$
Two objects belonging to either $K_0(\Var_{\FF_{\pi}})$ or $K_0(\Rep(\Fr))$ are said to be equal if they have the same counting function. And two formal power series in $q$ with coefficients in either $K_0(\Var_{\FF_{\pi}})$ or $K_0(\Rep(\Fr))$ are said to be equal if each of their coefficients has the same counting function.

For example, if $\rC_o$ is a geometrically connected, nonsingular projective curve,  
the Grothendieck-Lefschetz trace formula is written as the equality
$$\rC_o=H^0(\rC)-H^1(\rC)+H^2(\rC)=1-H^1(\rC)+\LL,$$
 where $\rC=\rC_o\times_{\overline{\FF_\pi}}\FF_\pi$, and $H^i(\rC)$ denotes the 
$i$-th \'etale cohomology group of $\rC$ with coefficients in $\Qlbar$, endowed with the action of Frobenius.
\item
Given a variety $\rC_o$ over $\FF_{\pi}$, we denote by $Z_H(\rC_o, q)$ its \emph{Hilbert zeta function}:
\begin{equation*}
Z_{H}(\rC_o, q):=\sum_{n=0}^\infty  \rC_o^{[n]}\cdot q^n \in K_0(\Var_{\FF_{\pi}})[[q]].
\end{equation*}
Note that this formal power series is invertible since it starts with $1$.
\item
Given a  $\Qlbar$-vector space $W$ with an action of Frobenius, we denote by  $\Lambda^*(-q W)$ the generating series of  its exterior powers:
\begin{equation*}\label{lambdaK}
\Lambda^*(-q W):= \sum_k (-q)^k \bigwedge^k  W\in K_0(\Rep(\Fr))[[q]].
\end{equation*}
This formal power series satisfies the identity 
\begin{equation}\label{lambdaK2}
\Lambda^*(-q (W_1 + W_2))= \Lambda^*(-q W_1)\Lambda^*(-q W_2).
\end{equation}
In particular, if $W_1, W_2$ are $\Qlbar$-vector spaces with trivial Frobenius action,
\begin{equation}\label{lambdaK3}
\Lambda^*(-q (W_1+W_2 \LL))=(1-q)^{\dim W_1}(1-q\LL)^{\dim W_2},
\end{equation}
a formula which we will often use.

Using this formalism,  the classical MacDonald formula \cite{mcd} for a nonsingular  (projective) curve $\rC_o$ with $r$ geometrically connected components which are defined over the base field  $\FF_{\pi}$ reads as: 
\begin{equation}\label{macdonald}
Z_H(\rC_o,q)=\frac{\Lambda^*(-q H^1(\rC) )}{\left((1-q)(1-q\LL) \right)^r}=\frac{\Lambda^*(-q H^1(\rC) )}
{\Lambda^*\left(-q\left(H^0(\rC)+H^2(\rC)\LL \right)\right) }.
\end{equation}

\item
Let  $\rC_o$ be a  nodal  curve  over $\FF_{\pi}$ and let 
$\Gamma=\Gamma_{\rC}$ be the dual graph of $\rC=\rC_o\times_{\FF_{\pi}} \ov{\FF_\pi}$.
For any $i=0,1$, we will set $H^i(\Gamma_{\rC}):=H^i(\Gamma_{\rC}, \Qlbar)$ and $H_i(\Gamma_{\rC}):=H_i(\Gamma_{\rC}, \Qlbar)$ endowed with the action of Frobenius (see \S \ref{S:dualgr}).

For any $I\subseteq \mathrm E(\Gamma_{\rC})$ and any $k\geq 0$, consider the map
$$\hat{N}_I^{(k)}:\bigwedge^k \left(H^1(\Gamma_\rC)+ H_1(\Gamma_\rC) \LL \right)\longrightarrow \bigwedge^k \left(H^1(\Gamma_\rC)+ H_1(\Gamma_\rC)  \LL \right) \LL^{|I|}
$$
defined in \S \ref{sec:CKS}. 
We now set 
\begin{equation*}
{\mathbf K}(\rC_o):=\sum_{k\geq 0} (-q)^k \sum_{I\subseteq \mathrm E(\Gamma_{\rC})} (-1)^{| I |} \Im \hat{N}_I^{(k)}  \in K_0(\Rep(\Fr))[[q]].
\end{equation*}
Using Lemma \ref{L:main}, it is easy to check that 
\begin{equation}\label{E:formK}
{\mathbf K}(\rC_o)=\sum_{I\in \mathscr{C}(\Gamma_{\rC})} (-q\LL)^{|I|} e_I^* \Lambda^*\left(-q\left(H^1(\Gamma_\rC\setminus I)+ H_1(\Gamma_\rC\setminus I) \LL \right)\right).
\end{equation}
Note that the homology and cohomology groups of $\Gamma_C\setminus I$  are not acted on  by the Frobenius unless the subset $I$ is Frobenius invariant. 
However, the sum on the right hand side of \eqref{E:formK} is the sum over all the subsets of $\mathscr{C}(\Gamma_{\rC})$, and   
is therefore acted on by the Frobenius, hence it belongs to $K_0(\Rep(\Fr))[[q]]$.
\end{enumerate}

\end{notation}

\begin{remark}\label{R:classCKS}
By the discussion in \S \ref{sec:CKS} (and using the Setup \ref{setup}), the class in $K_0(\Rep(\Fr))[[q]]$ of 
$$\sum_i q^i IC\left(\textstyle\bigwedge^i R^1 \pi_*{\Qlbar}_{|\Breg}\right)_b[-i]$$
is equal to 
$$\Lambda^*(-q H^1(\rC^\nu)) \cdot {\mathbf K}(\rC_o).$$
\end{remark}

Using the above notation, we can restate the main result of \cite{MY, MS2} as it follows.

\begin{theorem}(MacDonald formula for geometrical irreducible nodal curves, \cite{MY, MS2})\label{T:Mac-irr}
Let $\rC_o$ be a  {\em geometrically irreducible} nodal curve over $\FF_\pi$. 
Then the Hilbert zeta function of $\rC_o$ is equal to 
\begin{equation}\label{E:Mac-irr}
Z_H(\rC_o,q)=\frac{\Lambda^*(-q H^1(\rC^\nu))\cdot {\mathbf K}(\rC_o) }
{\Lambda^* \left( -q \left(H^0(\rC)+H^2(\rC)\LL \right) \right) } =\frac{\Lambda^*(-q H^1(\rC^\nu))\cdot {\mathbf K}(\rC_o) }
{(1-q)(1-q\LL)  }. 
\end{equation}
\end{theorem}


\medskip

The aim of this subsection is to generalize the above MacDonald formula to a (reducible) nodal curve $\rC_o$ defined over a finite field $\FF_\pi$, under the assumption that the geometrically irreducible components of $\rC_o$ are defined over the finite field $\FF_{\pi}$.



We will first find a formula for the Hilbert zeta function of $\rC_o$ in terms of the (co)homology of its spanning subgraphs.

\begin{proposition}\label{P:1for-Zeta}
Let $\rC_o$ be a nodal curve defined over a finite field $\FF_{\pi}$ of cardinality sufficiently big with respect to $\delta(\rC_o)$. 
Assume that the irreducible components of $\rC=\rC_o\times_{\FF_{\pi}} \ov{\FF}_{\pi}$ are defined over $\FF_{\pi}$.
Then we have the following formula for the Hilbert zeta function of $\rC_o$:
\begin{equation}\label{E:form1}
Z_H(\rC_o,q)=\Lambda^* \left(-qH^1(\rC^\nu)\right)
\left(
\sum_{J\subseteq \mathrm E(\Gamma_C)}(-q\LL)^{|J|}e_J^* \, 
\frac{\Lambda^* \left( -q (H^1(\Gamma_\rC \setminus J)+ H_1(\Gamma_\rC \setminus J) \LL \right)}{\Lambda^* \left(-q (H^0(\Gamma_\rC \setminus J) +H_0(\Gamma_\rC \setminus J) \LL  \right)}\right).
\end{equation}
\end{proposition}
As above, note that the homology and cohomology groups of $\Gamma_C\setminus J$  are not acted on  by the Frobenius unless the subset $J$ is Frobenius invariant. However, the sum on the right hand side of \eqref{E:form1} belongs to $K_0(\Rep(\Fr))[[q]]$, being the sum over all the subsets of $\mathrm E(\Gamma_{\rC})$. 


\begin{proof}
Since the cardinality of the finite field $\FF_{\pi}$ is big enough with respect to $\delta(\rC_o)$ (which is the number of nodes of $\rC$), we can find 
a rational curve $\rD_o$ with the same set of nodes of  $\rC_o$ and  of the same type (see the description of nodes in \S\ref{S:dualgr}).

Denoting $\nu: {\rC}_o^\nu \to \rC_o$ and $\nu : {\rD}_o^\nu \to \rD_o$
the normalization maps, by $\rC_{o, \mathrm{sm}}$ and $\rD_{o, \mathrm{sm}}$ the nonsingular sets, and by $\rC_{o, \times}=\rD_{o,\times}$ the singular (nodal) sets, we also have
\begin{equation*}\label{equa20}
\nu^{-1}(\rC_{o, \times})=\nu^{-1}(\rD_{o, \times}).
\end{equation*}

Recall how the Hilbert scheme of points factors into 
local contributions. 
Given a subset $\cS \subset C_o$, 
every point in $\rC_o^{[n]}$ is the union of a subscheme supported on $\cS$ and a subscheme
supported off $\cS$, whence a factorization  
\begin{equation*}\label{eq:descent}
Z_{H}(\rC_o,q)
= \left(\sum_{n=0}^\infty q^n \cdot (\rC_o \setminus \cS)^{[n]} \right) 
\cdot \left( \sum_{n=0}^\infty q^n  \cdot \rC_\cS^{[n]} \right)=
Z_{H}(\rC_o \setminus \cS,q)
\cdot  \left( \sum_{n=0}^\infty q^n  \cdot \rC_\cS^{[n]} \right),
\end{equation*}
where $\rC_\cS^{[n]}$ is the fibre of the Hilbert-Chow morphism over $\cS$.

Applying this to $\rC_o$ with $\cS= \rC_{o, \times}$, and to  $\rC_o ^\nu$ with $\cS=\nu^{-1}(\rC_{o, \times})$,
(resp. to $\rD_o$ with $\cS= \rD_{o, \times}$, and to  $\rD_o ^\nu$  with $\cS=\nu^{-1}(\rD_{o, \times})$), we find that
\begin{equation*}\label{equa30}
\frac{Z_{H}(\rC_o,q)}{Z_{H}(\rC_o ^\nu,q)}=
\frac{ Z_{H}(\rC_{o, \mathrm{sm}},q)(\sum (\rC_{o,\times})^{[n]}q^n)}{Z_{H}(\rC_{o, \mathrm{sm}},q)(\sum (\nu^{-1}(\rC_{o,\times}))^{[n]} q^n)}=\frac{ \sum (\rD_{o,\times})^{[n]}q^n}{\sum \nu^{-1}(\rD_{o,\times})^{[n]}q^n}
=\frac{\sum \rD_{o,\times}^{[n]}q^n}{\sum \nu^{-1}(\rD_{o,\times})^{[n]}q^n},
\end{equation*}
and similarly 
\begin{equation*}\label{equa31}
\frac{Z_{H}(\rD_o,q)}{Z_{H}(\rD_o^\nu,q)}=
\frac{ Z_{H}(\rD_{o, \mathrm{sm}},q)(\sum (\rD_{o,\times})^{[n]}q^n)}{Z_{H}(\rD_{o, \mathrm{sm}},q)(\sum (\nu^{-1}(\rD_{o,\times}))^{[n]} q^n)}
=\frac{\sum \rD_{o,\times}^{[n]}q^n}{\sum \nu^{-1}(\rD_{o,\times})^{[n]}q^n}.
\end{equation*}
Hence we conclude that
\begin{equation}\label{equa32}
\frac{Z_{H}(\rC_o,q)}{Z_{H}(\rC_o^\nu,q)}=\frac{Z_{H}(\rD_o,q)}{Z_{H}(\rD_o^\nu,q)}.
\end{equation}

Therefore, in order to conclude the proof, it remains to compute the Hilbert zeta functions of $\rC_o^\nu$, $\rD_o^\nu$ and $\rD_o$. 

The curve $\rD_o^{\nu}$ is smooth and rational, hence it is geometrically irreducible and with $H^1(\rD^{\nu})=0$. Hence MacDonald formula \eqref{macdonald}  for smooth curves gives that 
\begin{equation}\label{E:MacDnu}
Z_H(\rD_o^{\nu},q)=\frac{1}{(1-q)(1-q\LL) }.
\end{equation}

The curve $\rC_o^{\nu}$ is smooth and its geometrically irreducible components, whose number is equal to the cardinality $|V(\Gamma_{\rC})|$ of the dual graph $\Gamma_{\rC}$ of $\rC$, are defined over $\FF_{\pi}$ by our assumptions on $\rC_o$. Hence 
MacDonald formula \eqref{macdonald}  for smooth curves gives that 
\begin{equation}\label{E:MacCnu}
Z_H(\rC_o,q)=\frac{\Lambda^*(-q H^1(\rC) )}{\left((1-q)(1-q\LL) \right)^{|V(\Gamma_{\rC})|}}.
\end{equation}







The curve $\rD_o$ is geometrically irreducible; hence  MacDonald formula for geometrical irreducible nodal curves (see Theorem \ref{T:Mac-irr}) gives  that 
\begin{equation}\label{E:MacD}
Z_H(\rD_o,q)
=\frac{ {\mathbf K}(\rD_o)}{(1-q)(1-q\LL)}.
\end{equation}



We are left with computing ${\mathbf K}(\rD_o)$. Since  taking out any subset of edges does not disconnect $\Gamma_D$, we have that $\cC(\Gamma_{\rD})$ (see Definition \ref{D:defC}) is equal to the collection of all the subsets of the edge set $\mathrm E(\Gamma_{\rD})$ of $\Gamma_{\rD}$. Hence formula \eqref{E:formK} gives that 
\begin{equation}\label{E:KD}
{\mathbf K}(\rD_o)=\sum_{J\subseteq \mathrm E(\Gamma_{\rD})} (-q\LL)^{|J|}e_J^*\Lambda^*  \left(H^1(\Gamma_\rD \setminus J)
+ H_1(\Gamma_\rD \setminus J) \LL\right). 
\end{equation}

We now want to relate the (co)homology of the spanning subgraphs of $\Gamma_{\rC}$ with the ones of $\Gamma_{\rD}$. 
Observe that, by the construction of $\rD_o$ and the discussion in \S  \ref{S:dualgr}, the dual graphs  $\Gamma_{\rD}$ and $\Gamma_{\rC}$ have the same  set of oriented edges with the same Frobenius action, which implies that 
$\EE_{\Gamma_{\rD}}=\EE_{\Gamma_{\rC}}:=\EE$ and $E(\Gamma_{\rC})=\mathrm E(\Gamma_{\rD}):=\mathrm E$.
On the other hand, since $\rD_o$ is a rational curve, the vertex set of $\Gamma_{\rD}$ is one point with the trivial Frobenius action. Hence the exact sequences \eqref{E:seq} and \eqref{E:cohseq} applied to $\Gamma_{\rD}$ give that 
\begin{equation*} 
H_1(\Gamma_\rD)\cong \EE \: \text{ and } \: H^1(\Gamma_\rD)\cong \EE^*.
\end{equation*}
Substituting this into the  exact sequences \eqref{E:seq} and \eqref{E:cohseq} applied to $\Gamma_{\rC}$ and passing to the $K$-ring, we get the following equality in $K_0(\Rep(\Fr))$:  
\begin{equation*}\label{equagraf1}
 H_1(\Gamma_\rD) = H_1(\Gamma_\rC ) + \VV -  H_0(\Gamma_\rC  ) \: \text{ and } \:
 H^1(\Gamma_\rD ) = H^1(\Gamma_\rC ) + \VV^* -  H^0(\Gamma_\rC ),
 \end{equation*}
where   $\VV:=\VV_{\Gamma_{\rC}}$. Note that our assumption on the irreducible components of $\rC_o$ is equivalent to the fact that the action of Frobenius on the vertex set $V(\Gamma_{\rC})$ is trivial, hence the action of  Frobenius on  $\VV$  is trivial. 

The same relations hold between the graphs $\Gamma_\rC \setminus J$ and $\Gamma_\rD \setminus J$, obtained, respectively, from $\Gamma_\rC$ and $\Gamma_\rD$ by deleting  a set  $J\subset \mathrm E$  of edges, namely
\begin{equation*}\label{equagraf2}
 H_1(\Gamma_\rD \setminus J) = H_1(\Gamma_\rC \setminus J) + \VV -  H_0(\Gamma_\rC  \setminus J) \: \text{ and } \:
 H^1(\Gamma_\rD \setminus J) = H^1(\Gamma_\rC \setminus J) + \VV^* -  H^0(\Gamma_\rC  \setminus J).
 \end{equation*} 
Combining the above relations and using that $\VV\cong \VV^*$, we arrive at the relation
\begin{equation*}\label{equagraf3}
H^1(\Gamma_\rD \setminus J)+ H_1(\Gamma_\rD \setminus J)\LL=
H^1(\Gamma_\rC \setminus J)+ H_1(\Gamma_\rC \setminus J) \LL +\VV(1+\LL) -  H^0(\Gamma_\rC  \setminus J) - H_0(\Gamma_\rC  \setminus J)\LL .
\end{equation*}
By applying the operator $\Lambda^*(-q(-))$ to the above relation and using \eqref{lambdaK2} and \eqref{lambdaK3} (recall that the action of Frobenius on $\VV$ is trivial), we get 
\begin{equation}\label{E:LambdaCD}
\Lambda^*\left( H^1(\Gamma_\rD \setminus J)+ H_1(\Gamma_\rD \setminus J)\LL\right)=\Lambda^* \left(-q \left( \VV(1 + \LL ) \right) \right) \frac{\Lambda^*\left(H^1(\Gamma_\rC \setminus J)+ H_1(\Gamma_\rC \setminus J) \LL \right)}{\Lambda^*\left(H^0(\Gamma_\rC  \setminus J) + H_0(\Gamma_\rC  \setminus J)\LL \right)}=
\end{equation}
$$ 
= \left((1-q)(1-q\LL)\right)^{|\mathrm{V(\Gamma_{\rC})}|} \frac{\Lambda^*\left(H^1(\Gamma_\rC \setminus J)+ H_1(\Gamma_\rC \setminus J) \LL \right)}{\Lambda^*\left(H^0(\Gamma_\rC  \setminus J) + H_0(\Gamma_\rC  \setminus J)\LL \right)}. 
$$ 

Substituting \eqref{E:LambdaCD} into \eqref{E:KD}, we obtain 
\begin{equation}\label{E:KoD}
{\mathbf K}(\rD_o)=\left((1-q)(1-q\LL)\right)^{|\mathrm{V(\Gamma_{\rC})}|} \left( \sum_{J\subseteq \mathrm E}(-q\LL)^{|J|}e_J^* \, 
\frac{\Lambda^* \left(-q  \left( H^1(\Gamma_\rC \setminus J)+ H_1(\Gamma_\rC \setminus J) \LL \right)  \right)}
{\Lambda^* \left(-q  \left( H^0(\Gamma_\rC \setminus J)+ H_0(\Gamma_\rC \setminus J) \LL \right)  \right)} \right).
\end{equation}

We conclude by putting together \eqref{equa32}, \eqref{E:MacDnu}, \eqref{E:MacCnu}, \eqref{E:MacD} and \eqref{E:KoD}.  

\end{proof}


Now we want to express the right hand side of \eqref{E:form1} in terms of the operator ${\mathbf K}(-)$ applied to some special partial normalizations of the curve $\rC_o$, that we are now going to define.


Every subset $I$ of the edge set $ \mathrm E:=\mathrm E(\Gamma)$ of the dual graph $\Gamma:=\Gamma_\rC$ defines a partition $\lambda(I)$ of the vertex set $\mathrm V:=\mathrm V(\Gamma_{\rC})$: 
two vertices are in the same subset of the partition if they belong to the same connected component of the spanning subgraph 
$\Gamma \setminus I$.
The  partitions of $\mathrm V$ obtained this way will play a special role and we need a notation for them.

\begin{definition}\label{sm_partitions}
We denote by $\cP:=\cP(\Gamma)$ the set of partitions of the vertex set $\mathrm V$ of the form $\lambda(I)$, for some $I\subseteq \mathrm E$. 
\end{definition}

Given $\lambda \in \cP$, we let $S_\lambda$ to be the collection of subsets  $I \subseteq {\mathrm E}$ such that $\lambda(I)=\lambda$. 
Every   $S_\lambda$ has a minimal element $J_\lambda$ defined as follows: an edge belong to $J_\lambda$ if its end points belong to different subsets of the partition $\lambda$. We set $\delta(\lambda)=|J_\lambda|$.
Using the minimal element $J_{\lambda}$, we can give another description of $S_\lambda$: a subset $I\subseteq {\mathrm E}$ belongs to $S_\lambda$ if and only if $J_{\lambda}\subseteq I$ and the two graphs 
$\Gamma\setminus I $ and $ \Gamma\setminus J_\lambda$ have the same number of connected components. 

For any $\lambda\in \cP$,  set $\rC_\lambda$ be the (disconnected) nodal curve obtained from $\rC$ by normalizing the nodes in $J_\lambda$. Note that $\Gamma_{\rC_{\lambda}}=\Gamma_{\rC}\setminus J_{\lambda}$. 




The next theorem is the main result of this subsection. 
\begin{theorem}\label{vivek_nodal}
Same assumptions as in Proposition \ref{P:1for-Zeta}.
The  Hilbert zeta function of $\rC_o$ is equal to
\begin{equation}\label{E:form2}
Z_H(\rC_o,q)=
\Lambda^* \left(-qH^1(\rC^\nu)\right)\left(
\sum_{\lambda \in \cP(\Gamma_{\rC})} (q \LL)^{\delta(\lambda)}\frac{{\mathbf K }(\rC_\lambda)}
{\Lambda^* (-q \left( H^0(\Gamma_{\rC_\lambda})+ H_0(\Gamma_{\rC_\lambda}) \LL\right)} \right)
\end{equation}
\end{theorem}


\begin{proof}
Using Proposition \ref{P:1for-Zeta}, we have to show that the sum in the right hand side of \eqref{E:form1} is equal to the sum on the right hand side of \eqref{E:form2}.
 
 Note that we have a partition $\mathrm E(\Gamma)=\coprod_{\lambda\in \cP(\Gamma)}S_{\lambda}$, where $\Gamma:=\Gamma_{\rC}$. Moreover,  for each $J\in S_{\lambda}$, we have an inclusion of graphs $\Gamma\setminus J\subseteq \Gamma\setminus J_{\lambda}$ that induces a bijection on the number of connected components; hence we have that $H^0(\Gamma\setminus J)=H^0(\Gamma\setminus J_{\lambda})$ and $H_0(\Gamma\setminus J)=H_0( \Gamma\setminus J_{\lambda})$. 
  
The sum in the right hand side of \eqref{E:form1} can be written as
\begin{equation}\label{E:equ-sum}
\sum_{\lambda \in \cP(\Gamma)} \frac{ (q \LL)^{\delta(\lambda)}e^*_{J_\lambda}  }
 {\Lambda^*\left (-q \left( H^0(\Gamma \setminus J_\lambda)+ H_0(\Gamma \setminus J_\lambda) \LL\right)\right)}
\left(\sum_{J \in S_\lambda} (q \LL)^{|J|-\delta(\lambda)}e^*_{J \setminus J_\lambda}
\Lambda^*\left( -q\left(H^1(\Gamma \setminus J)+ H_1(\Gamma \setminus J) \LL \right) \right) \right).
\end{equation}
Remark that, since $J_\lambda$ is canonically attached to the partition $\lambda$, and since Frobenius acts trivially on this partition as we assumed that the geometric irreducible components of $\rC_o$ are defined over $\FF_{\pi}$, Frobenius acts trivially  on $e^*_{J_\lambda}$. This, together with the fact that $\Gamma_{\rC_{\lambda}}=\Gamma_{\rC}\setminus J_{\lambda}$, implies that we can rewrite \eqref{E:equ-sum} as
\begin{equation}\label{eqa_qf}
\sum_{\lambda \in \cP} \frac{ (q \LL)^{\delta(\lambda)}}
 {\Lambda^*\left (-q \left( H^0(\Gamma_{\rC_\lambda})+ H_0(\Gamma_{\rC_\lambda}) \LL\right)\right)}
\left(\sum_{J \in S_\lambda} (q \LL)^{|J|-\delta(\lambda)}e^*_{J \setminus J_\lambda}
\Lambda^*\left( -q\left(H^1(\Gamma_\rC \setminus J)+ H_1(\Gamma_\rC \setminus J) \LL \right) \right) \right)
\end{equation}
By the characterization of $S_{\lambda}$ given above, we  have that 
$$\mathscr{C}(\Gamma_{\rC_{\lambda}})=\{J\setminus J_{\lambda}\: : \: J\in S_{\lambda}\}.
$$
Moreover, for $J \in S_\lambda$ we have
$\Gamma_\rC \setminus J=\Gamma_{\rC_\lambda} \setminus (J\setminus J_\lambda)$ and  $|J|-\delta(\lambda)=|J\setminus J_\lambda|$.
Hence formula \eqref{E:formK} gives that 
\begin{equation}\label{E:K-Clambda}
\mathbf{K}(\rC_{\lambda})=\sum_{J \in S_\lambda} (q \LL)^{|J|-\delta(\lambda)}e^*_{J \setminus J_\lambda}
\Lambda^*\left( -q\left(H^1(\Gamma_\rC \setminus J)+ H_1(\Gamma_\rC \setminus J) \LL \right) \right). 
\end{equation}
Substituting \eqref{E:K-Clambda} into \eqref{eqa_qf}, we conclude that the sum in right hand side of \eqref{E:form1} is equal to the sum in the right hand side of \eqref{E:form2}, and this concludes the proof. 

\end{proof}

\section{Relative compactified Jacobian for non-versal families}\label{section:compjac_nvf}

The main result of this section, namely Theorem \ref{nonsing_rel_compjac}, gives sufficient conditions for a relative fine compactified Jacobian of a non-versal family to be nonsingular. In particular it allows the determination of the higher discriminants (see Definition \ref{high_discr_def}) for the relative compactified Jacobian of many families of planar curves. 
If a family of curves $\cC \to S$ contains only irreducible curves, then the relative compactified Jacobian is non singular if and only if the relative Hilbert schemes of any length are non singular \cite{S}. The if implication is still true for families of 
reducible curves (as we will show in Corollary \ref{hil_comp}), but the only if implication is no longer true:
already in arithmetic genus one, the "banana" curve, or a triangle of lines, give examples of fine compactified Jacobians which can be smoothed in a one-dimensional family, whereas the Hilbert scheme of length two of the curve needs at least a two-dimensional family. It should be clear from the proof of Theorem \ref{nonsing_rel_compjac} that the reason for this discrepancy is that certain torsion free sheaves, which, as points of the Hilbert scheme, can be smoothed only in a high dimensional family, cannot appear in the compactified Jacobian because of the stability condition. For instance, in the triangle, a torsion-free sheaf is contained in a fine compactified Jacobian if and only if it is locally free outside at most one point.

The proof of this fact, which we believe of independent interest, is based on the results of \cite{FGvS} and a local duality theorem due to T. Warmt \cite{W}, which we now review.
All the unproven facts here may be found in \cite[Chapter 4]{W} and \cite{FGvS}. 

Fix the following data:
\begin{enumerate}
\item 
a planar complete local ring $R= k[[x,y]]/(f)$, with $f=\prod_{a \in \Lambda} f_a$ and $f_a\in k[[x,y]]$ irreducible elements; assume that $k$ is an algebraically closed field of arbitrary characteristic.  
The set $\Lambda$ is the set of branches of $R$, i.e. minimal prime ideals of $R$, and we set $\lambda:= \sharp \Lambda$.
The normalization $\tilde{R}$ of $R$ is isomorphic to  $ \tilde{R}\simeq \prod_{a \in \Lambda} k[[T_a]]$, where $T_a$ is a parameter on the $a$-th branch. Observe that $\tilde{R} $ contains $R$ and it is a subring of the total fraction field 
 $Q(R)\simeq \prod_{a \in \Lambda} k((T_a))$.
 \item
a rank one, torsion-free $R$-module $M$, which, up to isomorphism, we can assume to contain $R$ and to be contained in $\wt{R}$:
$$R \subseteq M \subseteq \wt{R}.$$
\end{enumerate}
Consider the conductor ideal of the extension $\tilde{R}/R$ 
\begin{equation*}
\mathfrak{f}=
\mathrm{Ann}( \widetilde{R} / R)=\Hom_R(\widetilde{R}, R)=\{ u \in  R \mbox{ such that } u \widetilde{R} \subset R \},
\end{equation*}
which is the biggest ideal of $\tilde{R}$ contained in $R$. The delta-invariant of the ring $R$ is defined as  $\delta(R):=\dim \wt{R}/R$.
Since $R$ is Gorenstein by our assumptions, we have that 
\begin{equation}\label{gor}
\delta(R)=\dim {R}/\mathfrak{f}=\frac{1}{2} \dim \wt{R}/\mathfrak{f}.
\end{equation}

One can associate to the module $M$ two objects of primary importance:

\begin{itemize}

\item 
The first Fitting ideal $\mathrm{Fit}_1(M)$ of $M$, defined as the ideal generated by $(N-1)$-minors of a free resolution
$$
0 \longleftarrow M  \longleftarrow k[[x,y]]^N \longleftarrow k[[x,y]]^N  \longleftarrow 0
$$
of $M$ as a $k[[x,y]]$-module.
Under the hypotheses above, we have that 
\begin{equation*}
\mathrm{Fit}_1(M)= \{ \phi(m), \mbox{ for } m\in M \mbox{ and } \phi \in \mathrm{Hom}_R (M, R) \},
\end{equation*}
and  $\mathfrak{f} \subseteq \mathrm{Fit}_1(M)$, see \cite[Prop. C-2 and Cor. C-3]{FGvS}.

\item
The endomorphism ring of $M$
\begin{equation*}
\mathrm{End}_R(M)=\{ c \in \wt{R}\: : \: cm \in M \mbox{ for all } m \in M\}.
\end{equation*}
which is a subring of $\wt{R}$ containing $R$ and contained in $M$. 
Notice that $\mathrm{End}_R(M)$ may not be planar and not even Gorenstein.

\end{itemize}

We have the series of inclusions
\begin{equation*}
\mathfrak{f} \subseteq   \mathrm{Fit}_1(M)   \subseteq R \subseteq \mathrm{End}_R(M) \subseteq M \subseteq \wt{R}.
\end{equation*}

The  first Fitting ideal of $M$ is dual to the endomorphism ring of  $M$, as stated in the following result. 

\begin{proposition}\label{kkl}(\cite[Korollar 4.4.2, ii]{W})
Under the hypotheses above, the map 
\begin{equation*}
\begin{aligned}
 \mathrm{Hom}_R( \mathrm{End}_R(M) , R) & \longrightarrow \mathrm{Fit}_1(M) \\
 \Psi & \mapsto \Psi(\id) \\
 \end{aligned}
\end{equation*}
is an isomorphism.
\end{proposition}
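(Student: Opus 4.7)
The plan is to exploit that $R = k[[x,y]]/(f)$ is a one-dimensional hypersurface, hence Gorenstein, so that the duality functor $\Hom_R(-, R)$ realizes fractional-ideal inversion. Concretely, for any fractional ideal $I \subseteq Q(R)$ containing $R$, evaluation at $1$ gives a canonical isomorphism
\[
\Hom_R(I, R) \stackrel{\sim}{\longrightarrow} (R : I) := \{c \in Q(R) : c I \subseteq R\}, \qquad \varphi \mapsto \varphi(1),
\]
with inverse $c \mapsto (x \mapsto cx)$, and moreover double-duality $(R : (R : I)) = I$ holds. Applying this with $I = \End_R(M) = (M:M)$ --- a fractional ideal because $R \subseteq \End_R(M) \subseteq \widetilde R \subseteq Q(R)$ --- the map $\Psi \mapsto \Psi(\id)$ in the statement is identified with the tautological inclusion $(R : \End_R(M)) \hookrightarrow Q(R)$; in particular it is automatically injective, and its image lies in $R$ since $R \subseteq \End_R(M)$. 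The proposition therefore reduces to the identification of ideals
\[
(R : \End_R(M)) \;=\; \Fit_1(M).
\]

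To prove this, I would first convert the right-hand side into a fractional ideal. By the characterization recalled in the excerpt together with $\Hom_R(M, R) \cong (R : M)$, one has $\Fit_1(M) = (R : M) \cdot M$ as ideals of $R$. Using the elementary colon identity $(R : IJ) = ((R : J) : I)$ applied with $I = (R : M)$ and $J = M$, together with reflexivity $(R : (R : M)) = M$, one obtains
\[
(R : \Fit_1(M)) \;=\; (R : (R:M) \cdot M) \;=\; ((R : M) : (R : M)) \;=\; \End_R\bigl((R : M)\bigr).
\]
Taking one more dual via reflexivity, the goal becomes the assertion that
\[
\End_R(M) \;=\; \End_R\bigl((R : M)\bigr)
\]
as subrings of $Q(R)$.

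This last equality I would verify by hand: if $c \in \End_R(M)$, then for any $\phi \in (R : M)$ one has $(c\phi) M = \phi(cM) \subseteq \phi M \subseteq R$, giving $c\phi \in (R : M)$ and hence $c \in \End_R((R:M))$; the reverse inclusion follows symmetrically by invoking reflexivity $(R : (R : M)) = M$ to transport the check back to $M$. The single substantive input is thus Gorenstein reflexivity for fractional ideals over the one-dimensional complete intersection $R$, which I would quote from the standard theory of canonical modules (Bruns--Herzog) or directly from Warmt's thesis. The main point to keep in mind is that every identification must be realized as an equality of fractional ideals inside the common ambient ring $Q(R)$, not merely as an abstract isomorphism of $R$-modules --- it is precisely this that makes the chain of colon-manipulations collapse cleanly to $\Fit_1(M)$.
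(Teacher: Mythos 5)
Your argument is correct. Note that the paper does not actually prove this proposition: it is quoted verbatim from Warmt's thesis \cite[Korollar 4.4.2, ii]{W}, so there is no internal proof to compare against; what you have produced is a self-contained replacement for that citation, in the same spirit (a local duality statement for the Gorenstein ring $R$), resting on two standard inputs: the identification $\Hom_R(I,R)\cong (R:I)$, $\varphi\mapsto\varphi(1)$, for a regular fractional ideal $I$, and Bass's reflexivity $(R:(R:I))=I$ for regular fractional ideals over a one-dimensional Gorenstein local ring. Your reduction of the statement to the equality of ideals $(R:\End_R(M))=\Fit_1(M)$, and the verification of that equality via $\Fit_1(M)=(R:M)\cdot M$ and colon manipulations, all check out. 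Two small remarks. First, reflexivity may only be invoked for fractional ideals containing a nonzerodivisor; this is automatic for $M$, $(R:M)$ and $\End_R(M)$, and for $\Fit_1(M)$ it follows from the inclusion $\mathfrak{f}\subseteq\Fit_1(M)$ recalled in the text (the conductor is a regular ideal since $\widetilde{R}/R$ has finite length), so you should say this when dualizing $\Fit_1(M)$. Second, your route can be shortened: applying the colon identity in the other order gives
\begin{equation*}
(R:\Fit_1(M))=\bigl(R:(R:M)\cdot M\bigr)=\bigl((R:(R:M)):M\bigr)=(M:M)=\End_R(M)
\end{equation*}
directly by reflexivity of $M$, so the auxiliary identity $\End_R(M)=\End_R((R:M))$ (which you prove correctly, but which costs an extra reflexivity argument) is not needed; one then concludes by dualizing once more. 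Either way the proof is complete modulo the quoted Gorenstein duality facts.
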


Using the endomorphism ring of a module $M$, we can introduce an important numerical invariant of $M$.

\begin{definition}\label{def:typeM}
Let $\nu=(\lambda_1, \ldots, \lambda_{l(\nu)})$ be a partition of $\lambda= \sharp \Lambda$.
We say that $M$ has type $\nu$ if $\mathrm{End}_R(M)$ is direct product of $l(\nu)$ local rings, 
the $i$-th of which has $\lambda_i$ branches. The type of $M$ is denoted by $\nu(M)$. 
\end{definition}

Given a partition $\nu=(\lambda_1, \ldots, \lambda_{l(\nu)})$ as above, let 
$$
I_1=\{1, \cdots, \lambda_1\}, I_2=\{\lambda_1+1, \cdots , \lambda_1+\lambda_2\}, \cdots I_{l(\nu)}=\{\lambda-\lambda_{l(\nu)}+1, \cdots , \lambda\}$$ 
and let $R_\nu$ 
be the subring of $\tilde{R}$ given by 
$$R_\nu=\{(f_1(T_1), \cdots , f_{\lambda}(T_{\lambda})) \in  \prod_{i=1}^\lambda k[[T_i]] \mbox{ with } f_k(0)=f_l(0) \mbox{ if } k,l \in I_j \mbox{ for some }j \}.$$
Geometrically, $R_{\nu}$ is the disjoint union of the complete local rings at $0$ of the coordinate axes in $\mathbb{A}^{\lambda_i}$, for $i=1, \ldots, l(\nu)$. Therefore, the rings $R_{\nu}$ are seminormal and, indeed, they are all
the seminormal rings containing $R$ and contained in $\widetilde{R}$. 
If a partition $\nu'$ refines $\nu$ then we have that $R_{\nu}\subseteq R_{\nu'}$; the two extreme case being $R_{(1,\ldots, 1)}=\tilde{R}$ and $R_{(\lambda,0,\ldots,0)}$ which is the seminormalization of $R$. 
The delta invariant of $R_{\nu}$ is easily seen to be equal to 
\begin{equation}\label{deltaRnu}
\dim \tilde{R}/R_\nu:=\delta(R_\nu)=\lambda-l(\nu).
\end{equation}
From Proposition \ref{kkl} and using that $\mathfrak{f}=\Fit_1(\widetilde{R})$, we deduce that 
\begin{equation}\label{FitRnu} 
\dim \Fit_1(R_{\nu})/\mathfrak{f}=\lambda-l(\nu).
\end{equation}

From \cite[Chapter 7, Ex. 5.9]{Liu}, we deduce the following alternative characterization of the type of $M$.
\begin{lemma}\label{typeM}
The type of $M$ is the coarsest partition $\nu$ such that $\mathrm{End}_R(M)\subseteq R_\nu$. Hence, $R_{\nu(M)}$ is the seminormalization of $\End_R(M)$. 
\end{lemma}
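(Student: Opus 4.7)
The plan is to work directly with the semilocal structure of $\End_R(M)$ viewed as a subring of $\wt{R}=\prod_{a\in\Lambda}k[[T_a]]$, and compare it with the seminormal rings $R_\nu$. Write $S:=\End_R(M)$. Since $S$ is a finite $R$-algebra contained in $\wt R$, its maximal ideals correspond bijectively to the connected components of $\Spec S$, and under the map $\Spec\wt R\to\Spec S$ the branches $\Lambda$ are partitioned into blocks according to which maximal ideal of $S$ they lie over. This partition is exactly $\nu(M)=(\lambda_1,\dots,\lambda_{l(\nu)})$ by Definition~\ref{def:typeM}.

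The first step is to prove $S\subseteq R_{\nu(M)}$. It suffices to treat each local factor of $S$ separately, so assume $S$ is local with maximal ideal $\mathfrak m_S$ and residue field $k$ (recall $k$ is algebraically closed). Then any $s\in S$ satisfies $s-\overline s\in\mathfrak m_S$ for a unique $\overline s\in k$. Composing with the inclusion $S\hookrightarrow\wt R$ and reducing modulo $\mathfrak m_{\wt R}=\prod(T_a)$, which gives the quotient $k^\lambda=\prod_{a\in\Lambda}k$, the image of $s$ is $(\overline s,\dots,\overline s)$, i.e. lies in the diagonal. This says precisely that $s$ has the same value at the origin on every branch, which is the defining condition for membership in the local factor of $R_{\nu(M)}$ indexed by that block.

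For the second step, suppose $S\subseteq R_\nu$ for some partition $\nu$; we must show $\nu$ refines $\nu(M)$. The semilocal ring $R_\nu$ has exactly one maximal ideal per block of $\nu$, and the inclusion $S\hookrightarrow R_\nu$ induces a map $\Spec R_\nu\to\Spec S$ which, composed with $\Spec\wt R\to\Spec R_\nu$, is the canonical map. Consequently every block of $\nu$ is contained in a block of $\nu(M)$, i.e.\ $\nu$ is finer than $\nu(M)$; equivalently $\nu(M)$ is coarser, as claimed.

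Finally, for the seminormalization statement, I would combine the two steps with the fact quoted just before the lemma that the rings $R_\nu$ are exactly the seminormal rings with $R\subseteq R_\nu\subseteq\wt R$. Indeed, $R_{\nu(M)}$ is seminormal and contains $S$, so it contains the seminormalization $S^{\rm sn}$ of $S$ (inside $\wt R$); conversely $S^{\rm sn}=R_{\nu'}$ for some partition $\nu'$, and the inclusion $S\subseteq R_{\nu'}$ forces $\nu'$ to refine $\nu(M)$ by Step~2, hence $R_{\nu'}\supseteq R_{\nu(M)}$. Thus $S^{\rm sn}=R_{\nu(M)}$. The main (mild) obstacle is just the careful bookkeeping of the order-reversal between partitions and the rings $R_\nu$ (finer partitions giving larger rings), so I would make that convention explicit at the outset rather than rediscover it in each step.
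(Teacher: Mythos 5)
Your argument is correct. Note that the paper does not actually write out a proof of this lemma: it is deduced from the characterization of seminormal curve rings cited from Liu, Ch.~7, Ex.~5.9. So your proof is a genuinely self-contained alternative route. You obtain both assertions directly from the structure of $S:=\End_R(M)$ as a finite product of complete local $k$-algebras together with lying over for the finite extension $S\subseteq \wt{R}$, and you invoke the classification of the rings $R_\nu$ (the fact quoted just before the lemma) only in the final seminormalization step, combined with the standard minimality of the seminormalization among seminormal intermediate rings; this has the advantage of making visible why the blocks of $\nu(M)$ are exactly the fibres of the map sending a branch to the maximal ideal of $\End_R(M)$ it lies over, which the citation leaves implicit. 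Two small points are worth writing out explicitly. First, in your first step the claim that $s$ reduces to the diagonal $(\overline{s},\dots,\overline{s})$ uses that $\mathfrak{m}_S\subseteq \mathfrak{M}_a$ for every branch $a$ of the local factor, where $\mathfrak{M}_a$ is the maximal ideal of $\wt{R}$ corresponding to the branch $a$; this follows because $\wt{R}$ is finite over $S$, so each $\mathfrak{M}_a$ contracts to a maximal ideal of $S$, which is necessarily $\mathfrak{m}_S$ since $S$ is local. Second, in the last step the containment $S^{\mathrm{sn}}\subseteq R_{\nu(M)}$ uses that the seminormalization of $S$ inside its normalization $\wt{R}$ is contained in every seminormal ring between $S$ and $\wt{R}$ (Traverso/Swan), and that the normalization of $S$ is indeed $\wt{R}$; both are standard but deserve a word or a reference. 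With these made explicit, your proof is complete and consistent with the statement, including the order-reversing bookkeeping (finer partitions give larger rings $R_\nu$), which you correctly fix at the outset.
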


From the above characterization of the type of $M$ and Proposition \ref{kkl}, we deduce the following:

\begin{corollary}\label{FittM}
For any module as above, we have that 
$\mathrm{Fit}_1(M) \supseteq  \mathrm{Fit}_1(R_{\nu(M)})$.
\end{corollary}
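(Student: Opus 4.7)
The plan is to deduce Corollary \ref{FittM} directly from Proposition \ref{kkl} and Lemma \ref{typeM} by translating the inclusion of endomorphism rings into the reverse inclusion of their $R$-duals, and then back into an inclusion of Fitting ideals.

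First, I would apply Proposition \ref{kkl} to the rank-one torsion-free module $R_{\nu(M)}$ itself. The key observation is that $\End_R(R_{\nu(M)}) = R_{\nu(M)}$: for any subring $A$ of $\widetilde R$ that contains $R$, the set $\{c \in \widetilde R : c\cdot A \subseteq A\}$ equals $A$, since $1 \in A$ forces $c = c\cdot 1 \in A$. Hence Proposition \ref{kkl} specializes to an isomorphism
$$\Hom_R(R_{\nu(M)}, R) \stackrel{\sim}{\longrightarrow} \Fit_1(R_{\nu(M)}), \qquad \Psi \mapsto \Psi(\id).$$

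Next, Lemma \ref{typeM} gives the inclusion of $R$-subalgebras of $\widetilde R$,
$$\End_R(M) \subseteq R_{\nu(M)},$$
so restriction of homomorphisms yields an $R$-linear map
$$\rho: \Hom_R(R_{\nu(M)}, R) \longrightarrow \Hom_R(\End_R(M), R), \qquad \Psi \mapsto \Psi\bigr|_{\End_R(M)}.$$
Both sides are identified with subsets of $R$ by Proposition \ref{kkl}, via $\Psi \mapsto \Psi(\id)$; but under $\rho$ the image of $\id_{R_{\nu(M)}}$ is $\id_{\End_R(M)}$ (both are the element $1 \in R$), so the diagram
\begin{equation*}
\begin{CD}
\Hom_R(R_{\nu(M)}, R) @>\rho>> \Hom_R(\End_R(M), R) \\
@V{\wr}VV @VV{\wr}V \\
\Fit_1(R_{\nu(M)}) @>{\subseteq}>> \Fit_1(M)
\end{CD}
\end{equation*}
commutes, where the horizontal arrow on the bottom is the identity inclusion of subsets of $R$. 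This already gives the containment $\Fit_1(R_{\nu(M)}) \subseteq \Fit_1(M)$, which is the desired statement.

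There is essentially no obstacle: the argument is a formal consequence of Proposition \ref{kkl} and Lemma \ref{typeM}, and the only point requiring a moment's thought is the verification that $\End_R(R_{\nu(M)}) = R_{\nu(M)}$, which is needed in order to apply Proposition \ref{kkl} to $R_{\nu(M)}$. Everything else is functoriality of $\Hom_R(-,R)$ applied to the inclusion $\End_R(M) \hookrightarrow R_{\nu(M)}$.
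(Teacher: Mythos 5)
Your proof is correct and takes essentially the same route as the paper, which states the corollary as an immediate consequence of Lemma \ref{typeM} (giving $\End_R(M)\subseteq R_{\nu(M)}$) and Proposition \ref{kkl}; your argument, via $\End_R(R_{\nu(M)})=R_{\nu(M)}$ and restriction of functionals along $\End_R(M)\hookrightarrow R_{\nu(M)}$ compatibly with evaluation at $\id$, simply spells out that deduction.
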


We now review the nonsingularity condition for a relative fine compactified Jacobian at a given point: the reference is again \cite{FGvS}.
A clear recollection of the results can be found in \cite[\S 4.5]{W}.

Let $\rC$ be a projective reduced connected curve with planar singularities over $k=\ov{k}$ , $\rC_{\mathrm{sing}} =\{c_1 , \cdots, c_r\}$ its singular set,
$\{\Lambda_1 , \cdots, \Lambda_r\}$ the corresponding sets of branches, with cardinality $\lambda_i:=\sharp \Lambda_i$.

Given a singular point $c_i \in \rC_{\mathrm{sing}}$, let $f_i$ be a local equation of $C$ at $c_i$, so that $\hat{\cO}_{\rC, c_i} \simeq k[[x,y]]/(f_i).$ 
We have the deformation functor $\VV_{i}:=\Def_{\rC, c_i}$ of the local ring $\hat{\cO}_{\rC, c_i}$, 
whose tangent space $T\VV_i$ is the underlying vector space of the $k$-algebra $k[[x,y]]/(f_i, \partial_x f_i,  \partial_y f_i).$
There is the canonical subspace $\VV_{i}^\delta \subset T \VV_{i}$, the support of the tangent cone at $c_i$ of the equigeneric locus.
The subspace $\VV_{i}^\delta$ is the class in $T\VV_i=k[[x,y]]/(f_i, \partial_x f_i,  \partial_y f_i)$ of the conductor ideal $\mathfrak{f}_{i} :=\mathrm{Ann}( \widetilde{\mathcal O}_{\rC, c_i} / \cO_{\rC, c_i})$.
By (\ref{gor}),  we have that 
\begin{equation}\label{codeqgen}
\codim \VV_{i}^\delta=\delta (c_i) = \dim_k \widetilde{\mathcal O}_{\rC, c_i} / \cO_{\rC, c_i}.
\end{equation}
Given a partition $\nu_i$ of the set $\Lambda_i$ of branches at $c_i$, we have the
partial normalization with local ring $({{\mathcal O}_{\rC, c_i}})_{\nu_i}$ and the  subspace $\VV_i^{\nu_i}$, representing the class in $T\VV_i=k[[x,y]]/(f_i, \partial_x f_i,  \partial_y f_i)$ of the ideal $\mathrm{Fit}_1(({{\mathcal O}_{\rC, c_i}})_{\nu_i})$. 
By  \eqref{FitRnu},  we have that 
\begin{equation}\label{esti1}
\dim \VV_i^{\nu_i}/ \VV_i^{\delta}=\dim \mathrm{Fit}_1(({{\mathcal O}_{\rC, c_i}})_{\nu_i})/\mathfrak{f}_i=\lambda_i - l(\nu_i).
\end{equation} 

We set  $\VV:=\Def^{\rm loc}_\rC=\prod \VV_i$  and  $\VV^\delta:=\prod \VV_i^\delta\subset T\VV=\prod T\VV_i$, a codimension $\delta(\rC)=\sum \delta(c_i)$ linear subspace.
Given a multipartition  $\underline{\nu}=\{ \nu_i \}$, where $\nu_i$ is a partition of $\lambda_i$, we have the subspace $\VV^{\underline{\nu}}:=\prod \VV_i^{\nu_i}\subset T\VV$
and the corresponding partial normalization $\rC^{\underline{\nu}}$ of $\rC$, with local ring $(\cO_{\rC,c_i})_{\nu_i}$ at the point $c_i \in \rC$. The curve $C^{\un{\nu}}$ is seminormal and indeed all seminormal partial normalizations of $\rC$ 
are of the form $\rC^{\un{\nu}}$ for some unique multipartition $\un{\nu}=\{\nu_i\}$.  By \eqref{esti1}, we get that �
\begin{equation}\label{cod1}
 \codim \VV^{\underline{\nu}}=\sum_{i=1}^r \codim \VV^{\nu_i}_i = \sum_{i=1}^r (\delta(c_i)+ l(\nu_i)   - \lambda_i)=\delta(\rC) + \sum_{i=1}^r ( l(\nu_i)   - \lambda_i).
 \end{equation}  

Let $\I$ be a rank one torsion free sheaf  on $\rC$  with  stalk $\I_i$ at $c_i$.
The deformation functor  $\mathrm{Def}((\rC, c_i),\I_i)$ of the pair $(\hat{\cO}_{\rC, c_i}, \I_i)$ is endowed with a forgetful morphism  
$\rho_i: \mathrm{Def}((\rC, c_i),\I_i) \to \VV_i=\mathrm{Def}_{\rC, c_i}$ and we set 
$$
\rho:=\prod \rho_i : \prod \mathrm{Def}((\rC, c_i),\I_i) \to \VV=\Def^{\rm loc}_\rC.
$$ 
Let  $W_i(\I)=\mathrm{Im} (d \rho _i)$ and $W(\I) =\mathrm{Im}(d \rho)= \prod W_i(\I)$ be the images of the differentials.

The linear subspace $W_i(\I)$ is determined by the first Fitting ideal of $\I_i$.  
   
\begin{proposition} \label{W_i} (\cite[Prop. C-1]{FGvS})
The subspace $W_i(\I)$ is the class in $k[[x,y]]/(f_i, \partial_x f_i,  \partial_y f_i)$ of the first Fitting ideal $\mathrm{Fit}_1(\I_i)$ of the stalk $\I_i$ of $\I$ at $c_i$.
\end{proposition}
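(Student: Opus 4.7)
The plan is to work with an explicit matrix-factorization presentation of $\I_i$ and to trace through the effect of a first-order deformation of the pair $(\hat{\cO}_{\rC,c_i},\I_i)$ on the underlying singularity. Set $P := k[[x,y]]$ and $R := P/(f_i)$, and choose a minimal $P$-free resolution
\begin{equation*}
0 \to P^N \xrightarrow{A} P^N \to \I_i \to 0.
\end{equation*}
Since $\I_i$ is a rank-one torsion-free $R$-module, its $P$-annihilator equals $(f_i)$, so $\det A = u\cdot f_i$ for some unit $u \in P^\times$; by definition the ideal $\Fit_1(\I_i)$ is generated by the $(N-1)\times(N-1)$ minors of $A$, equivalently by the entries of the adjugate matrix $\mathrm{adj}(A)$.

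Next I would identify the tangent map $d\rho_i$. By the standard deformation theory of maximal Cohen--Macaulay modules over a hypersurface singularity, every first-order deformation of the pair $(R,\I_i)$ is represented by a lift $\widetilde{A} = A + \epsilon B$ of $A$ to $P[\epsilon]/(\epsilon^2)$, with $B$ an arbitrary $N\times N$ matrix over $P$, and two such lifts define isomorphic deformations precisely when they differ by a gauge transformation coming from automorphisms of the source and target copies of $P^N$. Under this dictionary the induced first-order deformation of $R$ is the hypersurface cut out by $\det\widetilde{A}$, and an elementary expansion gives
\begin{equation*}
\det(A+\epsilon B) \equiv \det(A) + \epsilon\cdot\mathrm{tr}\!\left(\mathrm{adj}(A)\cdot B\right) \pmod{\epsilon^2}.
\end{equation*}
Thus, after absorbing the unit $u$, the differential $d\rho_i$ sends the tangent vector represented by $\widetilde{A}$ to the class of $\mathrm{tr}(\mathrm{adj}(A)\cdot B)$ in the Tjurina algebra $T\VV_i = P/(f_i,\partial_x f_i,\partial_y f_i)$.

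As $B$ ranges over all $N\times N$ matrices with entries in $P$, the quantity $\mathrm{tr}(\mathrm{adj}(A)\cdot B)$ ranges exactly over the $P$-ideal generated by the entries of $\mathrm{adj}(A)$, which is $\Fit_1(\I_i)$. Reducing modulo $(f_i,\partial_x f_i,\partial_y f_i)$ identifies $W_i(\I)=\mathrm{Im}(d\rho_i)$ with the image of $\Fit_1(\I_i)$ in $T\VV_i$, which is precisely the statement of the proposition. This description is also consistent with the previously recalled inclusion $\mathfrak f_i \subseteq \Fit_1(\I_i)$, reflecting $\VV_i^\delta \subseteq W_i(\I)$, since any deformation of the pair a fortiori preserves the equigeneric condition at $c_i$.

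The main obstacle is the deformation-theoretic input invoked at the start of the second step: every first-order deformation of $(R,\I_i)$ is realized by a lift of the form $A+\epsilon B$, and gauge equivalence of such lifts matches genuine deformation equivalence. This is a standard fact about matrix factorizations of hypersurface singularities and is the essential non-elementary ingredient; granted it, the identification of $W_i(\I)$ with the image of $\Fit_1(\I_i)$ reduces to the determinant expansion displayed above.
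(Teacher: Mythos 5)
The paper does not actually prove this proposition: it is quoted verbatim from \cite[Prop.\ C-1]{FGvS}, so there is no internal argument to compare yours against. Your blind proof is essentially the argument of the cited source: present $\I_i$ by a square matrix $A$ over $P=k[[x,y]]$, identify first-order deformations of the pair with perturbations $A+\epsilon B$, and read off the induced deformation of the curve from $\det(A+\epsilon B)=\det A+\epsilon\,\mathrm{tr}(\mathrm{adj}(A)B)$, so that the image of $d\rho_i$ in the Tjurina algebra is the image of the ideal of $(N-1)$-minors, i.e.\ of $\Fit_1(\I_i)$. Two small points you should tighten, neither fatal: (1) the equality $\det A=u f_i$ does not follow from $\mathrm{Ann}_P(\I_i)=(f_i)$ alone (that only gives $\Fit_0\subseteq(f_i)$); you need that $\I_i$ has length one at each minimal prime of $(f_i)$ (rank one on every branch), so the determinantal divisor of the torsion $P$-module $\I_i$ is exactly $f_i$. (2) In the direction ``every deformation of the pair lands in $\Fit_1$'' you should note that the curve deformation attached to a given pair deformation is only determined up to the annihilator of the deformed module, which is $(\det(A+\epsilon B))+\epsilon(f_i)$; this is harmless because the ambiguity $(f_i)$ lies both in the Tjurina ideal and in $\Fit_1(\I_i)$, but as written you silently identify the deformed curve with the vanishing locus of $\det(A+\epsilon B)$. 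With these glosses, and granting the standard fact you flag (first-order deformations of the pair are exactly lifts of the presentation matrix, which follows from flatness plus the fact that $\I_i$ has projective dimension one over $P$), your argument is a correct reconstruction of the proof in \cite{FGvS}.
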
 

The linear subspace $W(\I)$  allows to characterize when a relative fine compactified Jacobian is regular at the point $\I$.   Recall that  given   a family of curves   $\pi: \cC \to B$ and a point $b\in B$ such that $\rC:=\pi^{-1}(b)=\cC_{b}$,  we have the local Kodaira-Spencer map (see \eqref{E:KSloc}):
$$k^{\rm loc}_{\pi,b}:T_b(B) \to T\Def^{\rm loc}_{\rC}=T\VV.$$


\begin{proposition}\label{nonsing_crit}(\cite[Cor. B-3]{FGvS})
Given a family $\pi: \cC \to B$, with $\rC=\cC_b$, a relative fine compactified Jacobian $\ov{J}_{\cC}$   is regular at a  point $\I$ lying in the central fiber $(\ov{J}_{\cC})_o=\ov J_{\rC}$ if and only if  $W(\I) + \Im(k^{\rm loc}_{\pi, b}) =T \VV$.
\end{proposition}

Consider  the endomorphism sheaf $\underline{\mathrm{End}}_{\oO_{\rC}}(\I)$ of $\I$: it is a sheaf of finite $\oO_{\rC}$-algebras such that
$\oO_{\rC} \subseteq \underline{\mathrm{End}}_{\oO_{\rC}}(\I) \subseteq \oO_{{\rC^\nu}}$. The sheaf $\I$ is naturally a sheaf on the partial normalization $ \rC^{\I}:= \underline{\mathrm{Spec}}_{\rC}( \mathrm{End}_{\oO_{\rC}}(\I) )$ of $C$; the original $\I$ being
recovered by the pushforward along the partial normalization morphism $\nu_{\I}: \rC^{\I} \to \rC$. 
For every singular point $c_i$ of $\rC$, denote by $\nu_i(\I_i)$ the type of $\I_i$ at $c_i$ (see Definition \ref{def:typeM}) and we set $\underline{\nu}(\I)=\{\nu_i(\I_i)\}$. It follows from Lemma \ref{typeM} that $\rC^{\underline{\nu}(\I)}$ is the seminormalization of $\rC^{\I}$. 
The following remark is obvious and it is recorded for later use.

\begin{remark}\label{conH}
The sheaf $\I$ is simple if and only if $\rC^{\I}$ is connected, or equivalently, if and only $C^{\un{\nu}(\I)}$ is connected. In particular, if $\I$ belongs to some fine compactified Jacobian of $\rC$, then  $C^{\un{\nu}(\I)}$ are connected.
\end{remark}

We want now to establish a necessary combinatorial criterion in order to check when the partial normalization $C^{\un{\nu}}$ is connected. 

To any  reduced projective curve $\rC$ (not necessarily locally planar), we associate an \emph{hypergraph} $H_{\rC}=(V(H_{\rC}), E(H_{\rm C}))$ as follows: the vertices $V(H_C)$ correspond to the irreducible components of 
$\rC$ and to each singular point $n\in \rC_{\rm sing}$ we associate an hyperedge $e_n$  which is a multiset of $V(H_C)$ consisting of all irreducible components that contain $n$, 
each one of which counted  with multiplicity equal to its number of branches at $n$. In this way, the cardinality $|e_n|$ of the hyperedge $e_n$ is equal to the total number of branches of $\rC$ at $n$.  
Note that if $\rC$ is a nodal curve, then the hypergraph $H_{\rC}$ is actually a graph and it coincides with the dual graph of $\rC$.  

\begin{lemma}\label{conhyp}
If the curve $\rC$ is connected then 
$$b(H_C):=\sum_{e\in E(H_C)} (|e|-1)-|V(H_{\rC})|+1\geq 0.$$ 
\end{lemma}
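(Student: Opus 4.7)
The plan is to realise $b(H_\rC)$ as the first Betti number of a connected auxiliary graph. To this end, I would introduce the bipartite graph $G$ with vertex set $V(H_\rC)\sqcup E(H_\rC)$ by placing, between a component-vertex $v\in V(H_\rC)$ and a singular-point-vertex $e_n\in E(H_\rC)$, one edge for each occurrence of $v$ in the multiset $e_n$; equivalently, one edge for each branch at $n$ of the irreducible component $C_v\subset\rC$ corresponding to $v$. A direct count gives
$$|V(G)|=|V(H_\rC)|+|E(H_\rC)|,\qquad |E(G)|=\sum_{e\in E(H_\rC)}|e|,$$
so once $G$ is known to be connected, its first Betti number is
$$b_1(G)=|E(G)|-|V(G)|+1=\sum_{e\in E(H_\rC)}(|e|-1)-|V(H_\rC)|+1=b(H_\rC),$$
which is non-negative since the first Betti number of any connected graph is non-negative.

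The only genuine point to verify is that $\rC$ connected implies $G$ connected. By construction, two component-vertices $v,v'\in V(G)$ lie in the same connected component of $G$ if and only if there is a chain of irreducible components $C_v=C_{v_0},C_{v_1},\ldots,C_{v_k}=C_{v'}$ in which any two consecutive terms share a singular point of $\rC$; but this is precisely the combinatorial criterion for $C_v$ and $C_{v'}$ to lie in the same connected component of $\rC$. Moreover each singular-point-vertex $e_n$ is adjacent to at least one component-vertex (the point $n$ lies on some component), so $G$ has no isolated singular-point vertices. Consequently $\pi_0(G)$ is in bijection with $\pi_0(\rC)$, and connectedness of $\rC$ transfers to connectedness of $G$.

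I expect no real obstacle beyond this verification: the rest is the bookkeeping identification $b(H_\rC)=b_1(G)$, and the only potentially delicate issue—the connectedness of $G$—is immediate from the way the gluing data of $\rC$ has been encoded. As a sanity check, in the nodal case every hyperedge has $|e_n|=2$, so $G$ is simply the barycentric subdivision of the dual graph $\Gamma_\rC$; one then recovers $b(H_\rC)=b_1(G)=h^1(\Gamma_\rC)$, consistent with the notation of Section \ref{sec:nodal}.
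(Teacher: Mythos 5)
Your proof is correct and is essentially the paper's argument: the paper likewise forms the bipartite incidence graph on $V(H_\rC)\sqcup E(H_\rC)$, notes $|V|=|V(H_\rC)|+|E(H_\rC)|$ and $|E|=\sum_e|e|$, and identifies $b(H_\rC)$ with the first Betti number of this connected graph. Your version is if anything slightly more careful, since you spell out that one edge is taken per branch (occurrence in the multiset) and verify connectedness of the incidence graph directly from connectedness of $\rC$, whereas the paper routes this through connectedness of the hypergraph.
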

\begin{proof}
Clearly the curve $\rC$ is connected if and only if its associated hypergraph $H_{\rC}$ is connected, i.e. there does not exist a partition of the vertex set $V(H_C)=V_1\coprod V_2$ such that every hyperedge $e$ contains only elements of
either  $V_1$  or $V_2$. We will therefore prove more generally that if a hypergraph $H=(V(H),E(H))$ is connected then $b(H)\geq 0$. 

In order to show this, consider the bipartite simple incidence graph $\Gamma_H$ constructed from $H$ as follows: its vertices $V(\Gamma_H)$  are the disjoint union of $V(H)$ and of $E(H)$ and its edges 
are given by $E(\Gamma_H):=\{(v,e)\in V(H)\coprod E(H) : \: v\in e\}$.  Clearly $H$ is connected if and only if $\Gamma_H$ is connected and, by construction, 
 we have that $|V(\Gamma_H)|=|V(H)|+|E(H)|$ and $|E(\Gamma_H)|=\sum_{e\in E(H)} |e|$.  Therefore, if $H$ is connected then $b(H)$ coincides with the first Betti number of $b_1(\Gamma_H)=|E(\Gamma_H)|-|V(\Gamma_H)|+1$ of $\Gamma_H$, which is non-negative.
\end{proof}

We are now ready to prove the main result of this section, namely a sufficient criterion for the regularity of relative fine compactified Jacobians.  The criterion will be expressed in terms of  the following closed subset of $T\VV$:

\begin{definition}\label{defW}
Let $\rC$ be a curve as above. Consider the closed locus $\WW\subset T\VV$ given by the union of the linear subspaces $\VV^{\un{\nu}}$, as $\un{\nu}$ varies among all  the maximal multipartitions such that $C^{\un{\nu}}$ is connected.
\end{definition}

The  locus $\WW$ has the following properties:

\begin{lemma}\label{propW}
\noindent 
\begin{enumerate}[(i)]
\item \label{propW1} The locus $\WW\subset T\VV$ has pure codimension $\delta^a(\rC)$.
\item \label{propW1bis} We have the inclusion $\VV^{\delta}\subseteq \WW$ with equality if and only if $\rC$ is irreducible.  
\item \label{propW2} If $\rC^{\un{\nu}}$ is connected then $\VV^{\un{\nu}}$ contains some irreducible component of $\WW$. 
\item \label{propW3} If $\I$ is a simple torsion-free rank one sheaf then $W(\I)$ contains some irreducible component of $\WW$. 
\end{enumerate}
\end{lemma}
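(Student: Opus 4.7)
The plan is to convert the problem into the combinatorics of the hypergraph $H_{\rC^{\un{\nu}}}$ introduced before Lemma \ref{conhyp} and then exploit the invariant $b(H_{\rC^{\un{\nu}}})$ defined there. Since $|V(H_{\rC^{\un{\nu}}})|=\gamma(\rC)$ (partial normalization does not change the number of irreducible components) and $\sum_{e'\in E(H_{\rC^{\un{\nu}}})}(|e'|-1)=\sum_i(\lambda_i-l(\nu_i))$ (summing over all parts of $\un{\nu}$, with size-one parts contributing $0$), the codimension formula \eqref{cod1} and the definition of $b(H)$ combine to give
\begin{equation}\label{key-identity}
\codim_{T\VV}\VV^{\un{\nu}}\;=\;\delta^a(\rC)-b(H_{\rC^{\un{\nu}}}).
\end{equation}
Viewing $\un{\nu}$ as set partitions of the branches, a single-step refinement (split one part of some $\nu_i$ into two nonempty pieces) corresponds to splitting one hyperedge of $H_{\rC^{\un{\nu}}}$ into two, which decreases $\sum_{e'}(|e'|-1)$ by $1$ and leaves $|V|$ unchanged.

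For part \eqref{propW1}, the identity \eqref{key-identity} together with Lemma \ref{conhyp} gives $\codim\VV^{\un{\nu}}\le\delta^a(\rC)$ with equality iff $b(H_{\rC^{\un{\nu}}})=0$, so the proof reduces to showing that every maximal $\un{\nu}$ with $\rC^{\un{\nu}}$ connected has $b=0$; equivalently, if $b(H_{\rC^{\un{\nu}}})\ge 1$ I must produce a connectedness-preserving single-step refinement. This is the main obstacle. The plan is to use the bipartite incidence graph $\Gamma_H$: since $b_1(\Gamma_H)=b(H)\ge 1$, $\Gamma_H$ contains a cycle; let $e'$ be a hyperedge-vertex on this cycle and $v\neq v'$ its two cycle-neighbors (both elements of $e'$). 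Splitting $e'$ into $A\ni v$ and $B\ni v'$ with $v\notin B$, $v'\notin A$ produces a new hypergraph $H'$; I will verify connectedness of $\Gamma_{H'}$ by analyzing the components of $\Gamma_H\setminus\{e'\}$. Every such component must meet $e'$ (because $\Gamma_H$ is connected), and the arc of the chosen cycle from $v$ to $v'$ avoiding $e'$ shows that $v$ and $v'$ belong to a common component; this component meets both $A$ and $B$, so $e'_A$ and $e'_B$ are joined through it in $\Gamma_{H'}$, proving that $\Gamma_{H'}$ is connected.

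Parts \eqref{propW1bis}, \eqref{propW2}, \eqref{propW3} then follow quickly. Because $\mathfrak{f}_i\subseteq\Fit_1(\cO_{\rC,c_i})_{\nu_i}$ for every $\nu_i$, we have $\VV^{\delta}=\VV^{(1,\dots,1)}\subseteq\VV^{\un{\nu}}$ for every $\un{\nu}$ in the definition of $\WW$, so $\VV^{\delta}\subseteq\WW$ always; equality forces the unique maximal connectedness-preserving multipartition to be $(1,\dots,1)$, which is equivalent to $\rC^\nu$ being connected, which happens iff $\rC$ is irreducible, proving \eqref{propW1bis}. For \eqref{propW2}, starting from any $\un{\nu}$ with $\rC^{\un{\nu}}$ connected I iterate the refinement step from the proof of \eqref{propW1}: each iteration decreases $b$ by $1$ while preserving connectedness, so after $b(H_{\rC^{\un{\nu}}})$ steps one reaches a maximal $\un{\nu}'$ refining $\un{\nu}$, whose $\VV^{\un{\nu}'}$ is an irreducible component of $\WW$ and is contained in $\VV^{\un{\nu}}$. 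Finally, for \eqref{propW3}: Remark \ref{conH} identifies simplicity of $\I$ with connectedness of $\rC^{\un{\nu}(\I)}$, while Corollary \ref{FittM} gives $W_i(\I)\supseteq\VV_i^{\nu_i(\I_i)}$ factor by factor and thus $W(\I)\supseteq\VV^{\un{\nu}(\I)}$; applying \eqref{propW2} to $\un{\nu}(\I)$ concludes.
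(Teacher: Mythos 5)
Your proposal follows essentially the same route as the paper: reduce everything to the hypergraph invariant $b(H_{\rC^{\un{\nu}}})$, use Lemma \ref{conhyp} ($b\geq 0$ for connected), show maximality forces $b=0$ so that \eqref{cod1} gives codimension exactly $\delta^a(\rC)$, and deduce (ii)--(iv) from the monotonicity of the Fitting ideals (Proposition \ref{kkl}, Corollary \ref{FittM}, Proposition \ref{W_i}) and Remark \ref{conH}. In fact you supply more detail than the paper precisely at the point the paper dismisses as ``easy to check'', namely that a connected $\un{\nu}$ with $b\geq 1$ admits a connected one-step refinement; your identity $\codim \VV^{\un{\nu}}=\delta^a(\rC)-b(H_{\rC^{\un{\nu}}})$ is a nice way to package \eqref{bH}.

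One small hole in that extra detail: the hyperedges are multisets, and for the count $b_1(\Gamma_H)=b(H)$ you must allow multiple incidence edges when a component has several branches in the same part. A cycle of $\Gamma_H$ can then be a doubled incidence edge between a vertex $v$ and a hyperedge $e'$, in which case the ``two cycle-neighbors $v\neq v'$'' you invoke do not exist (e.g.\ an irreducible curve with one node and the coarse partition), so the step as written cannot be executed. The repair is immediate: if some part contains two branches of the same component $v$, split those two branches into the two pieces; both new hyperedges contain $v$, so any path through $e'$ reroutes through $v$ and connectivity is preserved (and $b$ still drops by $1$); if no part contains a repeated component, all cycles of $\Gamma_H$ have length at least four and your argument applies verbatim. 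Finally, in (iii) the containment $\VV^{\un{\nu}'}\subseteq \VV^{\un{\nu}}$ for a refinement $\un{\nu}'$ of $\un{\nu}$, which you assert, does need the inclusion-reversing property $\Fit_1\bigl((\oO_{\rC,c_i})_{\nu_i'}\bigr)\subseteq \Fit_1\bigl((\oO_{\rC,c_i})_{\nu_i}\bigr)$ coming from $R_{\nu_i}\subseteq R_{\nu_i'}$ and Proposition \ref{kkl}; this is exactly the ingredient the paper cites there, so a sentence acknowledging it would make your write-up complete.
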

\begin{proof}
Part \eqref{propW1}: the irreducible components of $\WW$ are given by $\VV^{\un{\nu}}$, where $\un{\nu}$ is a maximal multipartition such that $\rC^{\un{\nu}}$ is connected. Lemma \ref{conhyp} implies that �$b(H_{\rC^{\un{\nu}}})\geq 0$. 
However, due to the maximality of $\un{\nu}$ we must have that $b(H_{\rC^{\un{\nu}}})=0$, for otherwise it is easy to check that we could find a refinement $\un{\nu'}$ with $\rC^{\un{\nu'}}$ still connected, violating the maximality of $\un{\nu}$. 
(This argument is the analogue for a hypergraph of the fact that every connected graph has a spanning tree.)
By the definition of $\rC^{\un{\nu}}$, it follows that if $\un{\nu}=\{\nu_i\}$ with $\nu_i=((\nu_i)_1,\ldots,(\nu_i)_{l(\nu_i)})$ a partition of the set $\Lambda_i$  of branches of $\rC$ at $c_i$, then the hyperedges of $H_{\rC^{\un{\nu}}}$ have cardinality 
$(\nu_i)_j$. Therefore, by the definition of $b(H_{\rC^{\un{\nu}}})$, we get
 \begin{equation}\label{bH}
 0=b(H_{\rC^{\un{\nu}}})=\sum_{i=1}^r \sum_{k=1}^{l(\nu_i)}((\nu_i)_k-1)-|V(H_{\rC^{\un{\nu}}})|+1=\sum_{i=1}^r(\lambda_i-l(\nu_i))-\gamma(\rC)+1.
 \end{equation}
 Combining \eqref{cod1} and \eqref{bH}, we deduce that 
 $$\codim \VV^{\un{\nu}}=\delta(\rC)-\gamma(\rC)+1=\delta^a(\rC), $$ 
  which concludes the proof of part \eqref{propW1}.
  
 Part \eqref{propW1bis}: consider the maximal multipartion  $\un{\nu}_{\rm max}$, i.e. the one for which each partition $\nu_i$ appearing in it has the form $\nu_i=(1,\ldots,1)$. From the above discussion, it follows that 
 $\rC^{\un{\nu}_{\rm max}}$ is the normalization $\widetilde{\rC}$ of $\rC$ and that $\VV^{\delta}=\VV^{\un{\nu}_{\rm max}}$. Therefore, we have the inclusion $\VV^{\delta}\subseteq \WW$ by Proposition \ref{kkl} and Proposition \ref{W_i}, and equality
 holds if and only if $\widetilde{\rC}$ is connected, which holds if and only if $\rC$ is irreducible.  �

Part \eqref{propW2}:  if $\un{\nu}$ is a multipartition such that $\rC^{\un{\nu}}$ is connected, then as observed above we can find a refinement $\un{\nu'}$ of $\un{\nu}$ such that $C^{\un{\nu'}}$ is connected and it is maximal with this property. 
 Therefore $\VV^{\un{\nu'}}$ is an irreducible component of $\WW$ and $\VV^{\un{\nu}}\subseteq \VV^{\un{\nu'}}$ by Proposition \ref{kkl}, q.e.d. �
  
Part \eqref{propW3}: if $\I$ is a simple torsion-free rank one sheaf then the seminormalization $\rC^{\un{\nu}(\I)}$ of $\rC^{\I}$ is connected (see Remark \ref{conH}) and we have that $\VV^{\un{\nu}(\I)}\subseteq W(\I)$ by Corollary \ref{FittM} and 
Proposition \ref{W_i}.  Therefore, we conclude by part \eqref{propW2}.
\end{proof}

Finally we can state and prove the main result of this section.

\begin{theorem}\label{nonsing_rel_compjac}
Let $\pi: \cC \to S$ be a projective  family  of connected curves, with $\rC=\cC_b$ having locally planar singularities, and let 
$k^{\rm loc}_{\pi,b}:T_b(B) \to T\Def^{\rm loc}_{\rC}=T\VV$
be the local Kodaira-Spencer map (see \eqref{E:KSloc}).
Let $\WW \subseteq T \VV$ be the locus of Definition \ref{defW}. Then a relative fine compactified Jacobian $\ov{J}_{\cC}$  is regular along $(\ov{J}_{\cC})_o=\ov{J}_{\rC}$ if 
$\Im(k^{\rm loc}_{\pi, b})$ is transverse to each irreducible component of $\WW$. In particular, this is the case if $\Im(k^{\rm loc}_{\pi, b})$ is a generic subspace of $T \VV$ of dimension at least $\delta^a(\rC)$.
\end{theorem}
\begin{proof} 
By Proposition \ref{nonsing_crit}, a relative fine compactified Jacobian $\ov{J}_{\cC}$  is regular along $(\ov{J}_{\cC})_o=\ov{J}_{\rC}$ if and only if $\Im(k^{\rm loc}_{\pi, b})$ is transverse to any linear subspace $W(\I)$ for any sheaf $\I\in \ov{J}_{\rC}$.
By Remark \ref{conH} and Lemma \ref{propW}\eqref{propW3}, any such linear subspace $W(\I)$ contains an irreducible component of $\WW$; therefore, if  $\Im(k^{\rm loc}_{\pi, b})$ is transverse to each irreducible component of $\WW$, then $\Im(k^{\rm loc}_{\pi, b})$ is transverse to every such linear subspace $W(\I)$ and the regularity of  $\ov{J}_{\cC}$  along $(\ov{J}_{\cC})_o=\ov{J}_{\rC}$ follows. 

Since $\WW$ has pure codimension $\delta^a(\rC)$ by Lemma \ref{propW}\eqref{propW1}, a generic linear subspace of dimension $\delta^a(\rC)$ is transverse to every irreducible component of $\WW$. 
\end{proof}

\begin{example}
Let $\rC$ be the banana curve.  Then $\Def(\rC)$ is 2-dimensional, since $\rC$ has two nodes.
We have $\delta^a(\rC) = \delta(\rC) + 1 - \gamma(\rC) = 2 + 1 - 2 = 1$, and indeed the relative
fine compactified Jacobian of a general 1-parameter family containing a banana curve is smooth
-- indeed, it is the family itself. 
\end{example}

\begin{example}\label{hitchin}
Let $\rC$ be a nonsingular projective curve of genus $g\geq 2$.
Let $h: \cM \to \AA$ be the Hitchin fibration 
for Higgs bundles over $\rC$ of rank $n$ and degree $d$ with $(d,n)=1$. 
We have the spectral curve family  $\pi: \cC \to \AA$:
For every $\underline{a} \in \AA$ the fibre $h^{-1}(\underline{a})$ is isomorphic to the 
fine compactified Jacobian of the spectral curve $\cC_{\underline{a}}=\pi^{-1}(\underline{a})$, mapping $n:1$ to $\rC$. 
Reducible spectral curves consist of a union of curves $\cC_i$ mapping $n_i :1$ to $\rC$, with $\sum n_i=n$.
For such a curve, the polarization of the corresponding Jacobian is described in Appendix A in \cite{MRV2}.
In this case, the loci where $\delta^{a}=r$ have exactly codim $r$, that is, by Theorem \ref{nonsing_rel_compjac}, the Hitchin system exhibits the 
minimal transversality to the $\delta^a$ loci  which is allowed in order to have a smooth total space.  
\end{example}

\begin{remark}\label{sharp}
The regularity criterion in Theorem \ref{nonsing_rel_compjac}  is sharp (in other words, the only if implication is also true) if the following Conjecture is true:

\begin{conjecture}
 Let $\ov J_{\rC}$ be a fine compactified Jacobian of a (reduced and projective) connected curve $\rC$ with planar singularities and let $\rC^{\un{\nu}}$ be a connected seminormal partial  normalization of $\rC$ that is maximal 
with these properties (or, even more generally, any connected partial normalization of $\rC$). Then there exists a sheaf $\I\in \ov J_{\rC}$ such that $\rC^{\I}=\rC^{\un{\nu}}$. 
\end{conjecture}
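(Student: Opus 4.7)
The plan is to construct the desired sheaf $\I$ as the pushforward of a suitably chosen line bundle along the partial normalization $\mu: \rC^{\un{\nu}} \to \rC$. The central structural observation is that a maximal connected seminormal partial normalization $\rC^{\un{\nu}}$ of a locally planar curve is again nodal: a seminormal planar singularity has tangent space of dimension equal to the number of its branches (since all branches are smooth and meet transversally), and planarity forces this number to be at most $2$, so each such singularity is a node. This reduces the problem to the nodal setting for $\rC^{\un{\nu}}$.

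For any line bundle $L$ on $\rC^{\un{\nu}}$, the pushforward $\I := \mu_* L$ is torsion-free of rank one on $\rC$, and one has $\underline{\mathrm{End}}_{\oO_\rC}(\mu_* L) = \mu_* \oO_{\rC^{\un{\nu}}}$. This is because $\mu_* L$ is locally an invertible $\mu_*\oO_{\rC^{\un{\nu}}}$-module, so any $\oO_\rC$-linear endomorphism, viewed in the total ring of fractions of $\rC^\nu$, acts by multiplication by an element of $\mu_*\oO_{\rC^{\un{\nu}}}$. In particular, $\rC^{\I} = \rC^{\un{\nu}}$, and connectedness of $\rC^{\un{\nu}}$ yields simplicity of $\I$ (cf. Remark \ref{conH}).

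The heart of the argument is to choose $L$ so that $\mu_* L$ is $\pol$-stable. For each subcurve $D \subseteq \rC$, one relates $\chi((\mu_*L)_D)$ to $\chi(L_{\mu^{-1}(D)})$ via a combinatorial correction $\alpha_{\un{\nu}}(D)$ depending only on how the partially normalized nodes intersect $D$ and $D^c$. Substituting into Definition \ref{D:sh-ss}, the $\pol$-stability inequalities for $\mu_*L$ on $\rC$ become Esteves-stability inequalities for $L$ on the nodal curve $\rC^{\un{\nu}}$ with respect to an induced polarization $\pol'$ defined by $\pol'_{D'} := \pol_{\mu(D')} + \alpha_{\un{\nu}}(\mu(D'))$ for subcurves $D' \subseteq \rC^{\un{\nu}}$. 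Assuming $\pol'$ is general, Theorem \ref{F:Est-Jac} produces the fine compactified Jacobian $\ov J_{\rC^{\un{\nu}}}(\pol')$, whose open stratum of $\pol'$-stable line bundles is non-empty by Proposition \ref{F:orb} (the case $S = \emptyset$) since $\rC^{\un{\nu}}$ is connected. Pushing forward any such $L$ yields $\mu_* L \in \ov J_\rC(\pol)$ with $\rC^{\mu_* L} = \rC^{\un{\nu}}$, settling the maximal seminormal case.

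The hard part is verifying that the induced polarization $\pol'$ remains general, i.e., that the combinatorial correction $\alpha_{\un{\nu}}$ does not accidentally align $\pol'$ with an integral value on some non-trivial subcurve of $\rC^{\un{\nu}}$. One expects to resolve this either by perturbing $\pol$ within its open chamber of general polarizations (noting that all fine compactified Jacobians in a chamber are canonically identified, so any existence result for a slightly perturbed $\pol$ transfers back) or by a direct case analysis comparing the lattice structure of polarizations on $\rC$ with the lattice of polarizations on $\rC^{\un{\nu}}$ via the pullback of subcurves. The second, more general statement of the conjecture (for arbitrary connected partial normalizations $\rC' \to \rC$, not necessarily seminormal or maximal) can be approached by the same construction $\I = \mu_* L$, now appealing to Theorem \ref{F:Est-Jac} directly on the locally planar curve $\rC'$ rather than to the nodal case, and proceeding by induction on the delta invariant $\delta(\rC')$.
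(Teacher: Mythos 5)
First, a point of orientation: this statement is left as an open conjecture in the paper -- the authors only verify it when $\rC$ is irreducible (take $\I=\nu_*(L)$ for $L$ of suitable degree on the normalization) and when $\rC$ is nodal (by citing \cite[Thm. 5.1]{MV}) -- so there is no proof in the paper to match, and your proposal would, if complete, be new content. It is not complete, and its first structural step is in fact false. You claim that a maximal connected seminormal partial normalization $\rC^{\un{\nu}}$ of a locally planar curve is nodal, arguing that a seminormal planar singularity has at most two branches. The statement about \emph{planar} seminormal singularities is correct, but $\rC^{\un{\nu}}$ is not locally planar in general: the local rings $R_{\nu_i}$ are germs of coordinate axes in $\AA^{m}$, and the paper explicitly warns that $\End_R(M)$ need not be planar or even Gorenstein. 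Concretely, take $\rC\subset \PP^2$ to be three concurrent lines. Its only connected seminormal partial normalization is the seminormalization itself, whose singularity is the union of the three coordinate axes in $\AA^3$ -- not a node. So the promised reduction to the nodal setting fails, and with it the appeal to the explicit description of strata of fine compactified Jacobians of nodal curves.

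Even setting that aside, the remaining argument stops exactly where the real difficulty begins. Writing $\I=\mu_*L$ and translating $\pol$-stability of $\mu_*L$ into inequalities for the multidegree of $L$ on $\rC^{\un{\nu}}$ is fine (and $\underline{\End}_{\oO_\rC}(\mu_*L)=\mu_*\oO_{\rC^{\un{\nu}}}$ does hold), but the conjecture is precisely the assertion that, for an \emph{arbitrary} general polarization $\pol$ on $\rC$, the resulting system of strict inequalities admits an integral solution. You acknowledge this ("the hard part is verifying that the induced polarization $\pol'$ remains general") and offer only heuristics: perturbing $\pol$ within its chamber does not change $\ov J_{\rC}(\pol)$ at all, so it cannot create the needed stable pushforward, and the "direct case analysis" is not carried out; nor is it clear that the induced data $\pol'$ is a polarization in the sense of Definition \ref{pola-def}, let alone a general one. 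For nodal $\rC$ this existence statement is exactly \cite[Thm. 5.1]{MV}, which the paper already invokes; for non-nodal, possibly non-Gorenstein $\rC^{\un{\nu}}$ (and for the stronger form of the conjecture with arbitrary connected partial normalizations) no argument is given. So the proposal reproduces the known cases and leaves the genuinely open part untouched.
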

\vspace{0.1cm}

The above Conjecture is easily checked to hold if $\rC$ is irreducible: in this  case 
the unique $\rC^{\un{\nu}}$ as in the statement of the conjecture is the normalization $\widetilde{\rC}$ of $\rC$ and it is enough to take $\I=\nu_*(L)$ for a line bundle $L$ on $\widetilde{\rC}$ of suitable degree. Therefore,  if $\rC$ is irreducible 
we have that $\WW=\VV^{\delta}$ by Lemma \ref{propW}\eqref{propW1bis} and Theorem \ref{nonsing_rel_compjac}� above is sharp.

The above Conjecture holds true for nodal curves by \cite[Thm. 5.1]{MV}; in particular, Theorem \ref{nonsing_rel_compjac} �is sharp if $\rC$ is a nodal curve. 
\end{remark}

Finally we compare the nonsingularity of relative fine compactified Jacobians \ref{nonsing_rel_compjac} with that of the relative Hilbert schemes.
As a consequence of the results in  \cite{S} the following holds:

  \begin{theorem} \label{thm:smoothness_hilb}
  Let $\pi: \cC \to S$ be a projective  family  of  (non necessarily connected) curves, with $\rC=\cC_b$ having locally planar singularities, let  $k^{\rm loc}_{\pi,b}:T_b(B) \to T\Def^{\rm loc}_{\rC}=T\VV$
be the local Kodaira-Spencer map (see \eqref{E:KSloc}) and let  $\cC^{[d]}\to B$ be the relative Hilbert scheme of length $d$. Then: 
    \begin{enumerate}
    \item The regularity of $\cC^{[d]}$ along  $(\cC^{[d]})_o=\rC^{[d]}$ depends only on $\mathrm{Im}(k^{\rm loc}_{\pi, b})$.
    \item If $\cC^{[d]}$ is regular along $\rC^{[d]}$, then $\dim \mathrm{Im}(k^{\rm loc}_{\pi, b}) \ge \min(d, \delta(\rC))$. 
    \item   $\cC^{[d]}$ is regular along $\rC^{[d]}$ for all $d$ if and only if
      $\mathrm{Im}(k^{\rm loc}_{\pi, b})$ is transverse to  $\VV^{\delta}$.  In particular, this is the case if $\Im(k^{\rm loc}_{\pi, b})$ is a generic subspace of $\VV$ of dimension at least $\delta(\rC)$.
    \end{enumerate}
  \end{theorem}

\begin{corollary}\label{hil_comp}
Let $\pi:\cC\to S$ be as in Theorem \ref{nonsing_rel_compjac}. If $\cC^{[d]}$ is regular along $(\cC^{[d]})_o=\rC^{[d]}$ for all $d$, then 
any  relative fine compactified Jacobian $\ov{J}_{\cC}$  is regular along $(\ov{J}_{\cC})_o=\ov{J}_{\rC}$.
\end{corollary}
\begin{proof}
It follows by comparing Theorem \ref{nonsing_rel_compjac} with Theorem \ref{thm:smoothness_hilb} and using that $\VV^{\delta}\subseteq \WW$. 
\end{proof}

The implication in the above Corollary can be reversed if $\rC$ is irreducible because in this case $\WW=\VV^{\delta}$ by Lemma \ref{propW}\eqref{propW1bis};
if  $\rC$ is a reducible nodal curve
this is not true, in view  of Remark \ref{sharp}, and  we expect it not to be true for every reducible curve with planar singularities. This would follow from 
Conjecture 5.14.

\section{Support theorems for versal families}\label{supp_thm_vf}

In this section, relying on the results of \S \ref{nodal_curves_section} and \S \ref{section:compjac_nvf}, we establish Theorems \ref{thm:jac_vf} and \ref{thm:hilb}, which are the main
results of this paper. In this section we work over an algebraically closed field. 
  
The results of \S \ref{section:compjac_nvf} can be interpreted as determining the higher discriminants of the relative compactified jacobian and relative Hilbert scheme families.
This allows us to reduce the determination of the supports to the nodal locus, which is precisely what we did in \S \ref{nodal_curves_section}.

\subsection{Higher discriminants}\label{high_discr}

Higher discriminants \cite{MS} give a-priori bound on supports which may appear
in the direct image of the constant sheaf by a proper map.

\begin{definition}\label{high_discr_def}
Let $f: X \to Y$ be a proper map between nonsingular varieties. For any $i\geq 1$, the $i$-th discriminant 
$\Delta^i (f)$ is  the  locus of $y \in Y$ such that
there is no $(i-1)$ dimensional subspace of $T_y Y$ transverse to $df_x(T_x X)$ for every $x \in f^{-1}(y)$.
\end{definition}
Observe that the $i$-th discriminants $\Delta^i (f)$ form a chain of closed subsets and moreover $\Delta^1(f)$ is the discriminant locus of the map $f$, i.e. the complement of the biggest open subset of $Y$ where the restriction of the morphism $f$ is a smooth morphism.

\begin{theorem} \cite[Theorem 3.3]{MS2}\label{hd_general}
Let $f:X \to Y$ be a projective map between 
algebraic varieties, with $X$ nonsingular.   
Let $\cG$ be a summand of  $Rf_* \QQ_\ell$, and let  $k$ be the codimension of $\mathrm{supp}\,\cG$.
Then 
$$\mathrm{supp}\, \cG \subseteq \Delta^k(f).$$

In particular, if, for every $k$, we have that 
\begin{equation}\label{codest}
\codim \Delta^k (f)\ge k \mbox{ for all } k, 
\end{equation}
 then every summand
of $Rf_* \QQ_\ell$ is supported on the closure of a $k$-codimensional component of $\Delta^k (f)$.
\end{theorem}

Notice that, over the complex numbers, it follows easily from the existence of stratifications that the estimate \ref{codest} always holds.  
The following Theorem, an easy consequence of the results of \S \ref{section:compjac_nvf}, gives a description of 
the higher discriminants loci of the relative Hilbert scheme and of any relative fine compactified Jacobian for a versal family in terms of $\delta$ 
(resp. $\delta^a$)-loci. As a consequence, estimate \ref{codest} holds over any algebraically closed field for the maps $\pi^J$ and $\pi^{[n]}$.
\begin{theorem}\label{hd_families}
Let $\pi:\C \to B$ be a projective versal family of curves with locally planar singularities, let  $\pi^J:\ov{J}_{\cC}\to B$ 
be a relative fine compactified Jacobian (which exists after passing to an \'etale cover of $B$ by Theorem \ref{T:fam-Jac}),
and let $\pi^{[n]}: \cC^{[n]} \longrightarrow B$ be the relative Hilbert scheme  of length $n$.

Then we have:
\begin{enumerate}
\item \label{discrcompjac} 
The $i$-th discriminant of $\pi^J$ is equal to 
\begin{equation}\label{hd_cj}
\Delta^i(\pi^J)=\{b \in B \text{ such that } \delta^a(\cC_b) \geq i \}.
\end{equation}
Moreover,  the geometric generic point of each  codimension $i$ irreducible component of $\Delta^i(\pi^J)$ is an  irreducible nodal curve. 
\item\label{dischilb}
For every $n$, we have
\begin{equation}\label{hd_hs}
\Delta^i(\pi^{[n]}) \subseteq \{b \in B \text{ such that } \delta(\cC_b)\geq i \}.
\end{equation}
Moreover,  the geometric generic point of each   irreducible component  of $\Delta^i(\pi^J)$ and of $\Delta^i(\pi^{[n]})$ is a nodal curve.
\end{enumerate}
\end{theorem}

\begin{proof}
Statement \eqref{discrcompjac}: the first part  follows from Theorem \ref{nonsing_rel_compjac}. 
For the second part: if $\cC_{\ov \eta}$ is a geometric generic point of a component of codimension $i$ of $\Delta^i(\pi^J)$, then, since $i\leq \delta^a(\cC_{\ov \eta}) \leq \delta(\cC_{\ov \eta})$, Fact \ref{F:strati} implies that  $\cC_{\eta}$ is a nodal curve with $\delta(\cC_{\ov \eta})=i$;
hence we must also have that $\delta^a(\cC_{\ov \eta})= \delta(\cC_{\ov \eta})$ which implies that $\cC_{\ov \eta}$ is irreducible. 

Statement \eqref{dischilb}: the first part  follows from Theorem \ref{thm:smoothness_hilb} while the second part follows from Fact \ref{F:strati}. 
\end{proof}

\subsection{The sheaf ${\rm Irr}(X/Y)$}\label{sheafirr}

We shortly discuss the sheaf of irreducible components of a family of curves.
Let $f:X \to Y$ be a proper family of geometrically reduced curves.  
By  \cite[Prop. 6.2]{N0} applied to the restriction $f_{\rm sm}:X_{\rm sm} \to Y$ of $f$ to the smooth locus, 
there is a constructible sheaf 
${\rm Irr}(X/Y)$ of finite sets, whose stalk ${\rm Irr}(X/Y)_y$ at the point $y$ is the set of irreducible components of the fibre 
$X_y=f^{-1}(y)$. 
Let  $\{Y_\alpha \}$ be the stratification of $Y$ such that ${\rm Irr}(X/Y)_{|Y_\alpha}$ is locally constant.
Let us fix $o\in Y$. Up to shrinking $Y$ we may assume that every stratum contains $o$ in its closure.

Since the fibre $X_{o}=f^{-1}(o)$ is geometrically reduced, we may find, after shrinking 
$Y$ again, a set  $\{\sigma_v\}_{v\in {\rm Irr}(X_o)}: Y \to X$ of sections of the family such that:  

\begin{itemize}
\item 
the point 
$\sigma_v(o)$ belongs to the smooth locus of the irreducible component corresponding to $v$;
\item 
for every $v$ and for every $y\in Y$, the point $\sigma_v(y)$ lies in the smooth locus of $X_y$, hence it belongs to a unique irreducible component of $X_y$.
\end{itemize}
Therefore, we get a map of sets

$$
\begin{aligned}
{\mathcal V}_y:{\rm Irr}(X_o)={\rm Irr}(X/Y)_o & \longrightarrow {\rm Irr}(X/Y)_y ={\rm Irr}(X_y)  \\
v & \mapsto \text{ irreducible component of } X_y \text{ that contains } \sigma_v(y),
\end{aligned}$$
defined for $y$ in a neighborhood of $o$. By the hypothesis on the strata, {\em this map is surjective}.
It follows in particular that, on an appropriate neighborhood of every point, the restriction of the sheaf
${\rm Irr}(X/Y)$ to the connected components of the strata containing the point in their closure is not only locally constant but in fact constant.
More precisely, for every point $y \in Y$ there is a partition $\lambda_y$ of ${\rm Irr}(X_o)$
$${\rm Irr}(X_o)=\coprod_{a \in {\rm Irr}(X_y)} V_a$$  
defined by $V_a:={\mathcal V}_y^{-1}(a).$ Let $Y_\lambda \subseteq Y$ be the locally closed subset of points $y\in Y$ such that $\lambda_y=\lambda$. 
The choice of a section in every subset of the partition gives a trivialization of the restriction of  ${\rm Irr}(X/Y)$ to $Y_\lambda$. We summarize the discussion above in the following

\begin{proposition}\label{sheafirrprop}
Let $f:X \to Y$ be a proper family of geometrically reduced curves, and let ${\rm Irr}(X/Y)$ its sheaf of irreducible components. 
For a point $o \in Y$, let $\cP_o$ be the set of partitions of the set ${\rm Irr}(X/Y)_o$ of irreducible components giving rise to a decomposition of $X_o$ into {\em connected} subvarieties. 
Then, there exists a neighborhood $U$ of $o$ in the \'etale topology endowed with  
a stratification $\{U_\lambda\}$, indexed by $\cP_o$, with the property that the restriction 
of the sheaf ${\rm Irr}(X/Y)$ to every $\{ U_\lambda \}$ is a constant sheaf of sets.
\end{proposition}

\begin{remark}\label{irr_spec_conn}
The restriction on the set of partitions stems from the fact that the specialization of an irreducible component is connected.
\end{remark}

\subsection{The families associated with a miniversal deformation}
\label{Indbroken}

We apply the considerations of section \ref{sheafirr} to versal families of curves. 

Let  $\pi:(\C, \rC) \to (B, o)$ be a projective versal deformation of the (reduced) curve with planar singularities $\rC$ over a connected variety $B$ (see Fact \ref{F:versal}\eqref{F:versal1}).
Up to passing to an open subset of $B$ containing $b$, we can assume that $\pi:\C\to B$ is a versal family of curves with locally planar singularities (see Fact \ref{F:versal}\eqref{F:versal2}), which implies that $B$ is smooth and irreducible (see the discussion that follows Fact \ref{F:versal}). Moreover, up to passing to a further Zariski open subset of $b$, we can assume that the family satisfies  the  conclusions in Proposition \ref{sheafirrprop}.

Let $V:=V(\rC)$ denotes the set of irreducible components of $\rC$. 
By Fact \ref{F:strati}, for any $d$, we have that the  locus $B^{\delta \geq d}_\times $ parameterizing nodal curves is open and 
dense in the stratum $B^{\delta \geq d}$.
By the discussion in \S \ref{sheafirr}, every curve $\cC_s$ of the family 
determines a partition $\lambda_s = \{V_{\alpha}\}_{\alpha\in V(\cC_s)}$ of $V(\rC)$, hence a decomposition of $\rC$ into a union of connected subcurves. 

\begin{remark}\label{case}
The partition associated to the generic (smooth)  fiber gives the partition associated with the connected components of $\rC$;
at the other extreme, the map $\mathcal V_s$ is a bijection for any fiber $\cC_s$ belonging to the equigeneric stratum 
(by Lemma \ref{irred_comp} below), hence it gives rise to the identity partition. 
More generally, if $\cC_{s'}$ is a specialization of $\cC_s$, then the map $\mathcal V_s$ factors through $\mathcal V_{s'}$, which implies that
  $\lambda_{s'}$ is a refinement  of $\lambda_s$.
\end{remark}

We start by proving the following result which is instrumental for defining the families we need to consider:

\begin{lemma}\label{irred_comp}
With the same assumptions as before, consider the equigeneric stratum
 of maximal cogenus,  $\Delta:=B^{\delta=\delta(\rC)}$,
and let ${\cC}_\Delta \to \Delta$ be the restriction of the universal family $\pi:\C\to B$ to $\Delta$. 
Then on  $\Delta$ the following properties hold true
\begin{enumerate}
\item \label{sheaf1}
the sheaf of sets  ${\rm Irr}(\cC _\Delta / \Delta) $ of the irreducible components is constant;
\item \label{sheaf2}
the sheaf of sets of connected subcurves is constant along $\Delta$.
\end{enumerate}
\end{lemma}
\begin{proof}
Let us first prove \eqref{sheaf1}. Consider the normalization  $\wt{\Delta}\to \Delta$ and denote by $\C_{\wt{\Delta}} \to \wt{\Delta}$  the pull-back of the family $\C_{\Delta}\to \Delta$.  
According to Fact \ref{F:sim-res}, the normalization $\wt{\C}_{\wt{\Delta}}\to \C_{\wt{\Delta}}$ is a simultaneous normalization of the family $\C_{\wt{\Delta}} \to \wt{\Delta}$.  In particular, the sheaf of 
connected components of the family $\wt{\C}_{\wt{\Delta}}\to \wt{\Delta}$, which coincides with the pull-back to $\wt{\Delta}$ of
 the sheaf of irreducible components of the family $\C_{\wt{\Delta}} \to \wt{\Delta}$, is locally constant on $\wt{\Delta}$, hence 
constant, in force of Proposition \ref{sheafirrprop}, since the central point belongs to the equigeneric stratum.

Let us now prove part \eqref{sheaf2}. 
From  \eqref{sheaf1}, we have that if $\cC_\Delta = \bigcup_{i=1}^N \cC^{(i)}_\Delta$ is the decomposition into irreducible components, then the decomposition into irreducible components
of the geometric fiber $\cC_{\ov t}$ over any point $t\in \Delta(\rC)$  equals  $\bigcup_{i=1}^N \cC^{(i)}_{\ov t}$. 
For each $t$, we have, by Hironaka's formula \cite[Lemma 3.3.2]{GLS}
\begin{equation}\label{hiro}
\delta(\cC_{\ov t})=\sum_{i=1}^N \delta(\cC^{(i)}_{\ov t})+ \sum_{1\leq k<l \leq N}| \cC^{(k)}_{\ov t}\cap \cC^{(l)}_{\ov t}|.
\end{equation}
The delta invariant and the intersection numbers of the subcurves are upper semicontinuous functions in flat families.
As the sum (\ref{hiro}) is constant, we have that $\delta(\cC^{(i)}_{\ov t})$ and $| \cC^{(k)}_{\ov t}\cap \cC^{(l)}_{\ov t}|$
don't depend on $t$. Assume $\bigcup_{i=1}^s \rC^{(i)}$ is a connected subcurve of the central fibre such that, for some $t$, $\bigcup_{i=1}^s \cC^{(i)}_{\ov t}$ is disconnected, namely, up to a renumbering,
 we have 
 \[
 \cC'_{\ov t} \bigcap  \cC'' _{\ov t} =\emptyset, \mbox{ with } \cC'_{\ov t}:= \left( \bigcup_{i=1}^a \cC^{(i)}_{\ov t} \right), \mbox{ and }  \cC'' _{\ov t} :=\left (\bigcup_{i=a+1}^s \cC^{(i)}_{\ov t} \right).
 \]
Denoting $\rC'=\bigcup_{i=1}^a \rC^{(i)}$ and $\rC''=\bigcup_{i=a+1}^s \rC^{(i)}$,  by the argument above we have $|\rC' \cap \rC ''|=|\cC'_{\ov t} \cap \cC ''_{\ov t}|=0.$ Since $\rC'$ and $\rC ''$ have no  common component, their intersection number is strictly positive unless the curves are disjoint, which would contradict the connectedness of $\bigcup_{i=1}^s \rC^{(i)}$.
\end{proof}

\subsection{Main Theorems}\label{main_thm}

Let   $\rC$ be a projective curve with planar singularities, defined over $\CC$ or over $\ov{\FF_\pi}$ with big enough cardinality. 
As in \S \ref{Indbroken}, consider a versal deformation $\pi:(\cC,\rC)\to (B,b)$ for $\rC$, small enough to satisfy the conclusions of Proposition \ref{sheafirrprop}. 
The index $(\,\,)_\times$ applied to subsets of $B$ denotes the operation of intersecting with the nodal locus.  

Consider any point $b \in \Delta_\times$: by Lemma \ref{irred_comp}(\ref{sheaf1}), $V:=V(\rC)$ is identified with $V(\cC_b)$.
For any partition $\lambda$ of $V$, giving a decomposition  $$\rC = \bigcup \rC_i$$ of $\rC$ into connected subcurves, we also have
a decomposition 
$$\cC_b = \bigcup \cC_{b, i}$$ of $\cC_b$, whose subcurves are connected by Lemma \ref{irred_comp} (\ref{sheaf2}).

\begin{notation}\label{familyday}
For $b \in \Delta_\times$ and $\lambda$ a partition of the set $V$ decomposing $\rC$ in connected subcurves, we let:
\begin{enumerate}
\item 
 $E_\lambda$ be the set of nodes joining the different subcurves, i.e. $E_\lambda= \bigcup _{i \neq j} \cC_{b,i} \bigcap \cC_{b,j},$ and set $\delta(\lambda):=|E_\lambda|$.

\item
$B_\lambda \subseteq  B_\times$  be the set where the nodes in $E_{\lambda}$ persist.

\item
$\pi_\lambda: \cC_{\lambda} \to B_{\lambda}$ be the family of reduced nodal curves
obtained by normalizing these nodes. Notice that the subcurves are now disjoint. 

\item
$B_{\lambda, \reg} \subseteq B_\lambda$ be the open dense set where the family 
$$
\pi_\lambda:{\cC_{\lambda}}_{|{B_{\lambda, \reg}}} \to B_{\lambda, \reg},
$$ 
is smooth. It is the subset of 
$B_\times$ where precisely the nodes in $E_\lambda$ persist while the others are smoothed.

\item
$i_\lambda: B_{\lambda, \reg} \to B$ be the (locally closed) embedding.
\end{enumerate}
\end{notation}

\begin{remark}
It is clear that this construction does not depend on the choice of $b$. 
Furthermore, if a partition $\mu$ refines the partition $\lambda$, then $E_\lambda \subset E_\mu$, hence the locus  $B_\mu$ is naturally contained in  $B_\lambda$, whereas the curves in $\cC_\mu$  are clearly partial normalizations of those in $(\cC_\lambda)_{|B_\mu}$, as they are obtained from these latters by normalizing other nodes.
\end{remark}

\begin{theorem} \label{thm:hilb}
Let $\pi:(\cC,\rC)\to (B,b)$ be as above and refer to Notation \ref{familyday}. 
Let
$$
\pi_\lambda^{[n]}:{\cC_{\lambda}}^{[n]}_{|{B_{\lambda, \reg}}}\to B_{\lambda, \reg},
$$
the associated relative Hilbert scheme of length $n$ (which coincide with the $n$-th relative symmetric product since $\pi_\lambda$ is smooth over $B_{\lambda, \reg}$),
and set 
\[
\fF_{\lambda}^{[n]}:=\bigoplus_i \left(\left({\iota_\lambda}\right)_{!*}R^i{\pi_\lambda^{[n]}}_* \Qlbar \right)[-i].
\]
Then we have

\begin{equation}\label{vivek_formula}
R\pi_*^{[n]} \Qlbar \cong \bigoplus_{\lambda \in \cP}  \fF_{\lambda}^{[n-\delta(\lambda)]} [-2\delta(\lambda)](\delta(\lambda))
\end{equation}
where $\cP$ is the set of partitions of the set $V(\rC)$ decomposing $\rC$ in connected subcurves.
\end{theorem}
\begin{proof}
We descend to a family $\pi_o:\cC_o \to B_o$ defined over a finite, big enough field $\FF_{\pi}$. 
Since the sheaf of irreducible components is constant along the stratum $\Delta$ of maximal cogenus by Lemma \ref{irred_comp}\eqref{sheaf1}, we can also assume, up to passing to a bigger finite field, that the geometric  irreducible components of the closed fibers of $\pi_o$ are defined over the base field $\FF_{\pi}$.

By the classical MacDonald's formula (Equation \ref{macdonald}), for every $\lambda \in \cP$ we have:
\begin{equation}\label{E:plambda}
\sum_n q^nR{\pi_\lambda^{[n]}}_* \Qlbar  =  \frac{\Lambda^*(-q R^1 {\pi_\lambda}_* \Qlbar)}
{\Lambda^*\left(-q\left(R^0 {\pi_\lambda}_* \Qlbar(1 + \LL \right)\right)}.
\end{equation}
Since the local system $R^0 {\pi_\lambda}_* \Qlbar$ is constant on $B_{\lambda, \reg}$, 
the effect of the denominator results only in some shifts, direct sums and Tate twists, hence ininfluent to the computation of 
$({\iota_\lambda})_{!*}$. 
Using formula \eqref{E:plambda} and  applying Theorem \ref{vivek_nodal} together with Remark \ref{R:classCKS}, we deduce  that at every point $b \in {B_o}_\times(\FF_{\pi^r})$ 
the traces of the powers of the Frobenius map on the stalks of the two sides of \eqref{vivek_formula} coincide.  
Now, applying Corollary \ref{works} of \S \ref{appen} we have the isomorphism \eqref{vivek_formula} on the whole nodal set 
$B_\times$.
Since the nodal set is dense in every higher discriminant by Theorem \ref{hd_families}\eqref{dischilb},  the isomorphism \eqref{vivek_formula} holds on the entire $B$.
\end{proof}

\begin{example}\label{ex:vivekformula}
Let $\rC$ be the union of pair of lines, $\rC_1, \rC_2$ which meet once and transversely.  
A representative for the base $B$ of a versal deformation of $\rC$ is given by 
taking the compactification of the map $(x,y) \mapsto xy$; in any case we denote this
deformation by $(\cC, \rC) \to (B,o)$.  

We want to compute directly the LHS and RHS of Theorem \ref{thm:hilb}.We will just
study the stalks at the point $[\rC]$. One has, e.g. from
\cite{Ran}, 
$$[\rC^{[n]}]=[(\PP^1)^{[n]}] + [(\PP^1 \coprod \PP^1)^{[n-1]})] \cdot \LL .$$
Hence, passing to the generating series, the LHS is given by: 
\begin{eqnarray*}
\bigoplus_{n=0}^\infty q^n R\pi^{[n]}_{*} \overline{\QQ}_l |_{[\rC]} & = & 
\sum_{n = 0}^\infty q^n \left( [\PP^n] + \LL \cdot  \sum_{j=0}^{n-1} [\PP^j]  [\PP^{n-1-j}] \right)
\\
& = & \frac{1}{(1-q)(1-q\LL)} + \frac{q \LL}{\left((1-q)(1-q\LL)\right)^2}.
\end{eqnarray*}


On the RHS, we are reduced to summing over decompositions
of the curve $\rC$; here there are just two, $\rC = \rC$ and $\rC = \rC_1 \cup \rC_2$, with $\delta(\lambda)=1$.
For this latter decomposition the stratum $B_\lambda$ is just the point $o$. 
All genera are zero and (hence) all fine compactified Jacobians are just points.  Thus, 
the contribution of $\rC = \rC$ is $ \frac{1}{(1-q)(1-q\LL)}$
and 
the contribution of $\rC_1 \cup \rC_2$ is $\left(\frac{1}{(1-q)(1-q\LL)}\right)^2$, whith a term $q\LL$ to account for the shifts in \ref{thm:hilb} . 
\end{example}

\begin{theorem} \label{thm:jac_vf}
Let $\pi:(\cC,\rC)\to (B,b)$ be as above and let 
 $\pi^J:\ov{J}_{\cC}\to B$ be a relative fine compactified Jacobian (which exists after passing to an \'etale cover of $B$, by Theorem \ref{T:fam-Jac}).
Then, if $j: B_{\reg} \to B$, we have
\begin{equation}
R\pi^J_* \Qlbar =\bigoplus_i j_{!*}\left(\textstyle\bigwedge^i R^1\pi_*{\Qlbar}_{|{\Breg}}\right)[-i]
\end{equation}  
i.e. no summand of $R\pi^J_* \Qlbar$ has positive codimensional support.
\end{theorem}
\begin{proof}
Over $\Breg$ the isomorphism $R\pi^J_* {{\Qlbar}_{|\Breg}} =\bigoplus_i \textstyle \bigwedge^i R^1\pi_*{{\Qlbar}_{|\Breg}}[-i]$
follows from the standard computation of the cohomology of the Jacobian of a nonsingular curve.
Hence $R\pi^J_* \Qlbar$ contains a summand isomorphic to $\bigoplus_i j_{!*}\left(\textstyle\bigwedge^i R^1\pi_*{\Qlbar}_{|\Breg}\right)[-i]$.
Assume by contradiction that there are other summands in the decomposition theorem: these must be supported on some 
codimension $i>0$ irreducible component of $\Delta^i(\pi^J)$ by Theorem \ref{hd_general}.  Theorem \ref{hd_families}\eqref{discrcompjac} implies that the generic point $\eta$ of this support is 
such that $\C_{\ov \eta}$ is an {\em irreducible} nodal curve.
Since the stalk at $\eta$ of the new summand is a complex of pure vector spaces, this would imply that the weight polynomial of the compactified Jacobian of $\C_{\eta}$ and
$ \mathfrak{w}\left( \sum_i IC\left(\textstyle\bigwedge^i R^1\pi_*{\Qlbar}_{|\Breg}\right)_\eta[-i]\right)$
disagree. But both polynomials are  equal to $(1 + t)^{2 g({\cC_{\ov \eta}^\nu})} t^{2  h^1(\Gamma)}$, where $\cC_{\ov \eta}^{\nu}$ is the normalization of the curve $\cC_{\ov \eta}$ (see Corollaries \ref{cor:weightpolyic} and \ref{C:weightJ}), and this is the desired absurd.

\end{proof} 

\begin{remark}\label{alternative}
In the appendix \ref{appen2} we will compute the weight polynomial of a fine compactified Jacobian of a general nodal curve, i.e. not necessarily irreducible.
The comparison with (\ref{weight_cks}) gives an alternative  proof of Theorem \ref{thm:jac_vf} which avoids the estimate on the dimension of the higher discriminants
of Theorem \ref{hd_families}\eqref{discrcompjac}. The proof given here, though, seems more conceptual to us, as it emphasizes the link between supports theorems and deformation theory.
\end{remark}

\subsection{Independently broken H-smooth families}
In this section we consider a class of not necessarily versal families of curves.
\begin{definition}\label{ind_brok_H_smooth}
A projective family $\pi: \cC \to B$ of curves with planar singularities is said independently broken H-smooth
if \begin{enumerate}
\item 
All the relative Hilbert schemes $\pi^{[n]}: \cC^{[n]} \to B$ have nonsingular total space (included the case $n=0$, i.e. $B$ is nonsingular),
and there exists a relative fine compactified Jacobian.
\item
The sheaf of irreducible components ${\rm Irr}(\cC/B)$ satisfies the conclusions of Proposition \ref{sheafirrprop}, 
\item
For every $d$, the set  $B^{\delta=d}:= \{b \in B  \: : \:  \delta(\cC_b)=d \}$ contains an open dense subset $B^{\delta=d}_\times$ parameterizing 
nodal curves.
\end{enumerate}
\end{definition}

\begin{example}
Let  $\rC$ be a projective curve with planar singularities and let $\pi:(\cC, \rC)\to (B,b)$ be a projective versal deformation of $\rC$ over a variety $B$. 
Pick a subspace $\UU \subset B$ 
of dimension at least $\delta(\rC)+1$ transverse to $\Delta$. If $\UU$ is small enough, the restriction of the versal family to $\UU$ 
gives an independently broken H-smooth family by Theorem \ref{thm:smoothness_hilb}.
Viceversa, an independently broken H-smooth family is locally the pullback along a smooth morphism of such a family.
\end{example}

Remark that, in view of Corollary \ref{hil_comp}, the total space of any relative fine compactified Jacobian for an independently broken H-smooth family 
 is nonsingular.
It is almost immediate to notice that the two main theorems in \S \ref{supp_thm_vf} hold for independently broken H-smooth families.
First notice that the constructions leading to the definitions of the loci $B_\lambda$, the families $\pi_\lambda$ may still be done.
Noticing that the higher discriminants are just the intersections of those for the versal family we easily see:

\begin{corollary}
Theorems \ref{thm:jac_vf} and \ref{thm:hilb} hold for  hold for independently broken H-smooth families.
\end{corollary}

\section{Appendix 1}\label{appen}
We collect here some consequences of the results contained in \S 5.3 of \cite{BBD} to justify our reduction to point counting.

In this appendix $B_o$ denotes an algebraic variety defined over the finite field $k=\FF_\pi$ and we will be considering perverse $\Qlbar$-sheaves (or more generally complexes of constructible $\Qlbar$-sheaves) on $B_o$ that are \emph{pure} in the sense of \cite[\S 5.1]{BBD}.
However, recall that we use (as always throughout this paper) a different convention on perverse sheaves with respect to \cite{BBD}: a sheaf $K$ supported on an irreducible closed subvariety $Y_o\subseteq B_o$ is perverse in our convention if and only if $K[\dim Y_o]$ is perverse in the sense of \cite{BBD}.

We will need the following two results  from \cite[\S 5.3]{BBD} on the structure of pure perverse sheaves on $B_o$.

\begin{proposition}\label{fact1}(\cite[Thm. 4.3.1, Prop. 5.3.9]{BBD})
A pure perverse sheaf $P_o$ on $B_o$  admits a unique decomposition 
\begin{equation*}\label{E:dec-pure}
P_o= \bigoplus_i S_i\otimes E_{n_i},
\end{equation*}
where $S_i$ are simple pure perverse sheaves on $B_o$ and $E_k$ is the rank $k$ Jordan block locally constant $\Qlbar$-sheaf described in \cite[p. 138]{BBD}.

Moreover, each $S_i$ is of the form $j_{!*}(L_i)$, where $j:U_{o, i}\hookrightarrow B_o$ is a locally closed embedding, $U_{o, i}$ is smooth and irreducible, and $L_i$ is a $\Qlbar$-sheaf lisse 
and irreducible on $U_{o,i}$. In particular, the support of $S_i$ is the irreducible closed subvariety $\ov U_{o, i}$. 
\end{proposition}

The supports of the simple pure perverse sheaves appearing in the decomposition \eqref{E:dec-pure} of $P_o$ are called the \emph{supports} of $P_o$ (note that the supports are irreducible closed subvarieties of $B_o$). The \emph{semisemplification} of $P_o$ is given in terms of the decomposition \eqref{E:dec-pure} as
\begin{equation*}
P_o^{ss}=\bigoplus_i S_i^{n_i}.
\end{equation*}

\begin{proposition}\label{fact2}(\cite[Cor. 5.3.11]{BBD})
If $P_o$ is a pure perverse sheaf, and $j:U_o \to B_o$ is a dense open imbedding, then
\begin{equation*}
P_o=j_{!*}j^*P_o \oplus P'
\end{equation*}
where $P'$ is a perverse pure sheaf supported on $B_o \setminus U_o$.
\end{proposition}

Using the above results, we can give the following criterion ensuring that two perverse pure sheaves have isomorphic semisemplifications.

\begin{proposition}
Let $P_o$ and $Q_o$ two pure perverse sheaves on $B_o$, and let $\{Y_{o,\alpha} \}_{\alpha=1}^l$ be the collection of the supports of $P_o$ and $Q_o$.
Assume that, for every $\alpha=1, \ldots, l$, there is a dense open subset 
$U_{o, \alpha} \subseteq Y_{o,\alpha}$, with the following property:
for every $x \in U_{o, \alpha}(k')$ with $k'$ a finite extension of $k$, and for every positive integer $N$, we have
$$
Tr(\sigma_x^N, P_x)=Tr(\sigma_x^N, Q_x)
$$
where  $\sigma_x$ is the Frobenius conjugacy class in   $\pi_1(U_{o, \alpha})$ associated to $x$.
Then $P_o$ and $Q_o$ have isomorphic semisimplifications.

In particular,  the two sheaves $P_o$ and $Q_o$ have the same traces of the Frobenius everywhere, i.e.
$$Tr(\sigma_x^N, P_x)=Tr(\sigma_x^N, Q_x),$$
for  every  point $x \in B_o(k')$ with $k'$ any finite extension of $k$, and for every positive integer $N$.
\end{proposition}
\begin{proof}
The proof is by induction on the number of supports. Consider a maximal support (i.e. a support that is not contained in any other support),  say $Y_{o, 1}$ up to renaming the supports. Consider an open dense subset $j:U_{o,1}\hookrightarrow Y_{o,1}$ as in the hypothesis. By the maximality of $Y_{o,1}$ and the fact that $Y_{o,1}$ is irreducible, we can assume, up to passing to a smaller open subset, that $U_{o,1}$ is smooth and disjoint from all the supports different from $Y_{o,1}$. 
Combining Propositions \ref{fact1} and \ref{fact2}, we can write (up to further restricting $U_{o,1}$):
\begin{equation}\label{doubledec}
\begin{sis}
& P_o =  j_{!*}(j^*(P_o)) \oplus P_o' \, \text{ with  } \, j^*(P_o) =  \bigoplus_i L_i\otimes  E_{n_{i}},\\
& Q_o=  j_{!*}(j^*(Q_o))\oplus Q_o' \, \text{ with  } \, j^*(Q_o) =  \bigoplus_i M_i\otimes  E_{m_{i}},
\end{sis}
\end{equation}
where $L_i$ and $M_i$ are $\Qlbar$-sheaf lisse  and irreducible on $U_{o,1}$,  $n_i$ and $m_i$ are natural numbers, $P_o'$ and $Q_o'$ are pure perverse sheaves supported on $B_o\setminus U_{o,1}$.

The $\Qlbar$-sheaves $j^*(P_o)$ and  $ j^*(Q_o)$ are lisse on $U_{o,1}$ and they have the same traces of Frobenius everywhere on $U_{o, 1}$ by the hypothesis and the fact that $U_{o,1}$ is disjoint from all the supports different from $Y_{o,1}$. 
 Hence we can  apply Chebotarev theorem (see \cite[Thm. 1.1.2, Prop 1.1.2.1]{L-tf}) in order to conclude that $j^*(P_o)$ and $j^*(Q_o)$ have the same semisemplification, i.e. 
  \begin{equation}\label{E:samess}
\bigoplus_i L_i^{n_i} =j^*(P_o)^{ss}= j^*(Q_o)^{ss} =\bigoplus_i M_i^{m_i}. 
\end{equation}
In particular, $j^*(P_o)$ and $j^*(Q_o)$ have the same traces of Frobenius everywhere on $\ov{U_{i,o}}=Y_{o,i}$. 
This implies that the two pure perverse sheaves $P_o'$ and $Q_o'$ verify the same hypothesis on the traces of  Frobenius with respect to their supports $\{Y_{o,\alpha}\}_{\alpha=2}^l$. Hence by the induction hypothesis on the number of supports, we have that 
\begin{equation}\label{ssinduc}
(P_o')^{ss}=(Q_o')^{ss}.
\end{equation}
Combining \eqref{doubledec}, \eqref{E:samess} and \eqref{ssinduc}, we conclude that $P_o^{ss}=Q_o^{ss}$.

\end{proof}

\begin{corollary}\label{works}
Let $K_o$ and $L_o$ two pure complexes of constructible $\Qlbar$-sheaves on $B_o$ such that
$$
K_o \simeq \bigoplus \, ^p\cH^i(K_o)[-i],\, \, L_o \simeq \bigoplus \, ^p\cH^i(L_o)[-i].
$$
Let $\{Y_{o,\alpha} \}_{\alpha=1, \cdots l}$ be the collection of the supports of $^p\cH^i(K_o)$ and $^p\cH^i(L_o)$.
Assume that, for every $\alpha=1, \cdots l$, there is a dense open subset 
$U_{o, \alpha} \subseteq Y_{o,\alpha}$, with the following property:
for every $x \in U_{o, \alpha}(k')$ with $k'$ a finite extension of $k$, and for every positive integer $N$, we have
$$
Tr(\sigma_x^N, K_x)=Tr(\sigma_x^N, L_x)
$$
where  $\sigma_x$ is the Frobenius conjugacy class in  
$\pi_1(U_{o, \alpha})$ associated to $x$.
Then $K_o$ and $L_o$ have isomorphic semisimplifications.
\end{corollary}

\begin{proof}
One proceed by induction, starting with the open set on which $K_o$ and $L_o$ are isomorphic 
to a direct sum of pure semisimple (shifted) lisse sheaves. 
Then, using the fact that every summand is pointwise pure on an open set of its support, 
one can separate the different perversities according to the absolute values of the Frobenius eigenvalues.

\end{proof}

\section{Appendix 2}\label{appen2}

In this appendix, we work over an algebraically closed field 
$k = \overline{k}$.  Our goal is to determine
the class of a fine compactified Jacobian of a nodal curve $\rC$ in
$K_0(Var_{\overline{k}})$. As explained in Remark \ref{alternative} this computation gives an alternative proof of 
\ref{thm:jac}, and in turns it is a consequence of it. We include it for completeness, as we believe is of independent interest.

Let us first compute the class in $K_0(Var_{\overline{k}})$ of the generalized Jacobian $J_C$ of $C$, which is by definition the connected component of the Picard scheme $\Pic(C)$ of $C$ containing the identity. The normalization morphism $\nu: \rC^{\nu}\to \rC$ induces the sequence
$$1 \to \Gm \to \nu_* \Gm \to \nu_* \Gm/ \Gm \to 1,$$
which yields by taking cohomology:
\begin{equation}\label{E:seqGm}
1 \to H^0(\rC, \Gm) \to H^0({\rC^\nu}, \Gm) \to
H^0(\rC, \nu_* \Gm/ \Gm)
\to H^1(\rC, \Gm) \to H^1({\rC^\nu}, \Gm)  \to 1.
\end{equation}
In terms of the dual graph $\Gamma=\Gamma_{\rC}$ of $\rC$, we have
$$1 \to H^0(\Gamma, \ZZ)\otimes \Gm \to H^0({\rC^\nu}, \Gm) \to
H^0(\rC, \nu_* \Gm/ \Gm) \to H^1(\Gamma, \ZZ)\otimes \Gm \to 1.$$

Substituting into \eqref{E:seqGm} and restricting to the
connected component of the identity gives an exact sequence of algebraic groups
\begin{equation}\label{E:seqJ}
1\to H^1(\Gamma, \ZZ)\otimes \Gm\cong \Gm^{h^1(\Gamma)} \to J_{\rC} \stackrel{\nu^*}{\to} J_{\rC^{\nu}}\to 1,
\end{equation}
where $h^1(\Gamma)$ is the rank of the free abelian group $H^1(\Gamma, \ZZ)$.

Since $\mathbb{G}_m=\mathrm{GL}_1$ is a special group, the sequence \eqref{E:seqJ} is Zariski locally trivial, hence we have the following equality in $K_0(Var_{\overline{k}})$:
\begin{equation}\label{E:classJ}
J_{\rC} = J_{{\rC^\nu}} \mathbb{G}_m^{h^1(\Gamma)}=J_{{\rC^\nu}} (\LL-1)^{h^1(\Gamma)}.
\end{equation}

In order to compute the class in $K_0(Var_{\overline{k}})$ of a fine compactified Jacobian
$\ov J_{\rC}(\pol)$ of $\rC$, we need to recall the stratification of $\ov J_{\rC}(\pol)$ in terms
of partial normalizations of $\rC$ studied in \cite{MV} (see also \cite{OS, A}).
Given any torsion free, rank-$1$ sheaf $\I$ on $C$, its endomorphism sheaf $\underline{\mathrm{End}}_{\oO_{\rC}}(\I)$ is a sheaf of finite $\oO_{\rC}$-algebras such that
$\oO_{\rC} \subseteq \underline{\mathrm{End}}_{\oO_{\rC}}(\I) \subseteq \oO_{{\rC^\nu}}$. The sheaf $\I$ is naturally a sheaf on the partial normalization $ \rC^{\I}:= \underline{\mathrm{Spec}}_{\rC}( \mathrm{End}_{\oO_{\rC}}(\I) )$ of $C$; the original $\I$ being
recovered by the pushforward along the partial normalization morphism $\nu_{\I}: \rC^{\I} \to \rC$.
Since $\rC$ is nodal, it can be checked that $\rC^{\I}$ is the partial normalization of $\rC$ at all the nodes where $\I$ is not locally free and $\I$ is a line bundle on $\rC^{\I}$.
This gives rise to a stratification of any fine compactified Jacobian $\ov J_{\rC}(\pol)$
into locally closed subsets
\begin{equation}\label{E:stratJ}
\ov J_{\rC}(\pol)=\coprod_{S\subset \rC_{\rm sing}} \ov J_{\rC, S}(\pol):=
\coprod_{S\subseteq \rC_{\rm sing}} \{\I\in \ov J_{\rC}(\pol)\: : \: C^\I=C^S\}.
\end{equation}

The following result describes  the stratum $\ov J_{\rC, S}(\pol)$ in terms of the graph $\Gamma\setminus S$ obtained from the  dual graph $\Gamma=\Gamma_{\rC}$ of $\rC$ by deleting the edges corresponding to $S$.

\begin{proposition}\label{F:orb} (\cite[Thm. 5.1]{MV}) Let $\rC$ be a connected nodal curve over $\ov k$ and let $\overline{J}_{\rC}(\pol)$ be a fine compactified Jacobian. Then for every $S\subseteq \rC_{\rm sing}$,
the stratum $\ov J_{\rC, S}(\pol)$ is isomorphic to a disjoint union of
$\hat{c}(\Gamma \setminus S)$ copies of $J_{\rC^S}$, where
\begin{equation}\label{E:cG}
\hat{c}(\Gamma\setminus S)=
\begin{cases}
c(\Gamma\setminus S)=\# \{\text{spanning trees of } \Gamma\setminus S\} & \: \text{ if } \Gamma\setminus S  \text{ is connected,}\\
0 & \: \text{ if } \Gamma\setminus S  \text{ is not connected.}\\
\end{cases}
\end{equation}
\end{proposition}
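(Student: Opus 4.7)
The plan is to realize $\ov J_{\rC, S}(\pol)$ as a union of translates of the generalized Jacobian $J_{\rC^S}$, indexed by multidegrees on the partial normalization $\rC^S$ whose associated line bundles push forward to $\pol$-stable sheaves on $\rC$. First I would set up the bijection between the stratum and a subset of $\Pic(\rC^S)$: given $\I \in \ov J_{\rC, S}(\pol)$, by assumption $\rC^\I = \rC^S$, so $\I$ comes from a uniquely determined line bundle $L \in \Pic(\rC^S)$ via $\I = \nu_{S*}L$. Conversely, for $L \in \Pic(\rC^S)$, the sheaf $\nu_{S*}L$ is torsion-free of rank $1$ on $\rC$, fails to be locally free precisely at the nodes in $S$, and hence has $\rC^{\nu_{S*}L} = \rC^S$. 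So the stratum is identified with the locus of $L \in \Pic(\rC^S)$ whose pushforward is $\pol$-stable. Since $\pol$-stability depends only on the multidegree of $L$, each connected component of this locus is a connected component of $\Pic(\rC^S)$, hence a torsor under $J_{\rC^S}$.

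Next I treat the case where $\Gamma \setminus S$ is disconnected. Write $\rC^S = \bigsqcup_i \rC^S_i$, and let $D_i \subset \rC$ be the image of $\rC^S_i$ under $\nu_S$. Then the $D_i$ are mutually complementary connected subcurves of $\rC$, with pairwise intersections lying entirely in $S$. The pushforward $\nu_{S*}L$ restricted to $D_i$ modulo its torsion subsheaf equals (up to pushforward along the partial normalization of $D_i$ at the nodes inside $S\cap D_i$) the sheaf associated to $L|_{\rC^S_i}$, whose Euler characteristic is the integer $\chi(L|_{\rC^S_i})$. Since $\pol$ is general and each $D_i$ is a connected subcurve of $\rC$ whose complement is also a disjoint union of connected subcurves, $\pol_{D_i} \notin \ZZ$. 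Strict $\pol$-stability would require $\chi(L|_{\rC^S_i}) > \pol_{D_i}$ for every $i$, but
\[
\sum_i \chi(L|_{\rC^S_i}) \;=\; \chi(L) \;=\; |\pol| \;=\; \sum_i \pol_{D_i},
\]
so the strict inequalities cannot all hold. Hence no $L$ produces a $\pol$-stable sheaf and $\ov J_{\rC, S}(\pol)$ is empty, matching $\hat c(\Gamma\setminus S)=0$.

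Finally, for $\Gamma \setminus S$ connected, I must count the multidegrees $\un d$ on $\rC^S$ of total degree $|\pol|$ for which every line bundle of multidegree $\un d$ is $\pol$-stable after pushforward. The stability condition translates to a finite system of strict linear inequalities on $\un d \in \ZZ^{V}$ (one per proper subcurve of $\rC$), defining an open polytope. Since $\pol$-stable and $\pol$-semistable coincide under Lemma \ref{L:nondeg}, no $\un d$ lies on a boundary wall. The sublattice of multidegrees arising from twisting by line bundles pulled back from $\rC$ acts freely on $\ZZ^V$ with quotient isomorphic to the graph Jacobian $\mathrm{Jac}(\Gamma\setminus S)$, and the stability polytope is a fundamental domain for this translation action. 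Hence the number of stable multidegrees equals $|\mathrm{Jac}(\Gamma\setminus S)| = c(\Gamma\setminus S)$ by Kirchhoff's matrix-tree theorem. Each such multidegree contributes exactly one connected component of $\Pic(\rC^S)$, namely a copy of $J_{\rC^S}$, yielding the asserted description of the stratum. The main obstacle is the combinatorial matching in this last step: translating the Euler-characteristic stability inequalities, which are indexed by subcurves of the original curve $\rC$ rather than of $\rC^S$, into a hyperplane arrangement on $\ZZ^V$ whose chamber count equals $c(\Gamma\setminus S)$ — this is where the specific form of Esteves's stability condition must be used to align with the classical chamber structure underlying the matrix-tree theorem.
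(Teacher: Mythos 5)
This proposition is not proved in the paper at all: it is quoted verbatim from \cite[Thm. 5.1]{MV}, so there is no in-paper argument to compare with, and your attempt has to be judged against the standard proof (Oda--Seshadri, Melo--Viviani). Your framework is the right one: sheaves in the stratum $\ov J_{\rC,S}(\pol)$ are exactly pushforwards $\nu_{S*}L$ of line bundles $L$ on $\rC^S$, stability of $\nu_{S*}L$ depends only on the multidegree of $L$, and each stable multidegree contributes one copy of $J_{\rC^S}$. Your treatment of the disconnected case is also essentially correct (with the convention, as in \cite{MRV1}, that $\chi(\I)=|\pol|$, so that summing the strict inequalities over the pieces $D_i$ gives a contradiction); in fact it can be done in one line, since for disconnected $\rC^S$ the sheaf $\nu_{S*}L$ is a direct sum of sheaves supported on the $D_i$, hence not simple, while for a general polarization every semistable sheaf is stable and hence simple.

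The genuine gap is the connected case, i.e. the statement that a general polarization admits exactly $c(\Gamma\setminus S)$ stable multidegrees; this is the entire content of \cite[Thm. 5.1]{MV}, and your sketch of it is both incomplete and incorrect as formulated. The ``sublattice of multidegrees arising from twisting by line bundles pulled back from $\rC$'' is not the relevant lattice: the components of $\rC^S$ are in bijection with those of $\rC$, $\nu_S^*$ preserves multidegree, and every multidegree is realized by a line bundle on $\rC$, so twisting by such pullbacks moves the multidegree by an arbitrary element of the degree-zero lattice, and the quotient you would form is trivial rather than the Jacobian group of $\Gamma\setminus S$. The correct lattice is the image of the graph Laplacian of $\Gamma\setminus S$ (equivalently, the multidegrees of the twisters $\oO_{\cC}(\sum_v a_v\, \cC_v)|_{\rC}$ coming from a one-parameter smoothing), whose quotient inside the degree-zero multidegrees is the critical group, of order $c(\Gamma\setminus S)$ by Kirchhoff. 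Even after this correction, the key assertion --- that the region cut out by the Esteves inequalities (which are indexed by subcurves of $\rC$ and involve corrections by the nodes of $S$ lying on each subcurve) contains exactly one multidegree from each coset of that lattice --- is precisely the nontrivial combinatorial theorem to be proved, and you explicitly defer it as ``the main obstacle'' rather than proving it. As it stands the proposal reduces the proposition to the statement it was supposed to establish.
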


We are now ready to compute the class of a fine compactified Jacobian of a nodal curve in $K_0(Var_{\overline{k}})$.

\begin{proposition} \label{prop:motiveofjbar}
Let $\rC$ be a connected nodal curve over $\overline{k}$ and let $\ov J_{\rC}$ be a fine compactified Jacobian of $C$.  Then, in $K_0(Var_{\overline{k}})$ ,
we have
\begin{equation}
\overline{J}_{\rC}(\pol)= J_{{\rC^\nu}} \cdot c(\Gamma) \LL^{h^1(\Gamma)}.
\end{equation}

\end{proposition}
\begin{proof}
From the stratification \eqref{E:stratJ} together with Proposition \ref{F:orb} and \eqref{E:classJ}, we get that
$$\overline{J}_{\rC}(\pol) = \sum_{S \subset \rE} \hat{c}(\Gamma \setminus S) \cdot J_{\rC^S} = J_{{\rC^\nu}}
\sum_{S \subset \rE} \hat{c}(\Gamma \setminus S) \cdot (\LL - 1)^{h^1(\Gamma \setminus S)}.$$
Thus our goal is to prove
$$\hat{c}(\Gamma) \LL^{h^1(\Gamma)} = \sum_{S \subset \rE} \hat{c}(\Gamma \setminus S) \cdot (\LL - 1)^{h^1(\Gamma \setminus S)}.$$
Note that if $\hat{c}(\Gamma \setminus S)$ is not zero, i.e. if $\Gamma\setminus S$ is connected, then $h^1(\Gamma \setminus S) = h^1(\Gamma) - |S|$.
We substitute $x + 1 = \LL$.  Then the above required formula reads
$$ \hat{c}(\Gamma) \sum_{i=0}^{h^1(\Gamma)} {h^1(\Gamma) \choose i} x^i  = \sum_{S \subset \rE} \hat{c}(\Gamma \setminus S) \cdot x^{h^1(\Gamma) - |S|}.$$
This holds for each coefficient of $x$ by the following Lemma \ref{lem:subsum}.
\end{proof}

\begin{lemma} \label{lem:subsum}
For any connected graph $\Gamma$,
\begin{equation*}
\sum_{\substack{S\subseteq \rE(\Gamma) \\ |S|=i}} \hat{c}(\Gamma\setminus S)=\binom{b_1(\Gamma)}{i}\cdot \hat{c}(\Gamma). 
\end{equation*}
\end{lemma}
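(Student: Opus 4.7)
The plan is a simple double-counting argument. The key observation is to reinterpret $\hat{c}(\Gamma \setminus S)$ uniformly (whether or not $\Gamma \setminus S$ is connected) as the number of spanning trees of $\Gamma$ that avoid $S$.

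More precisely, I claim that for every $S \subseteq \rE(\Gamma)$,
$$\hat{c}(\Gamma \setminus S) = \#\{T \text{ spanning tree of } \Gamma : E(T) \cap S = \emptyset\}.$$
Indeed, since $\Gamma$ and $\Gamma \setminus S$ have the same vertex set, a spanning tree of $\Gamma \setminus S$ is precisely a spanning tree of $\Gamma$ whose edges lie in $E(\Gamma) \setminus S$. If $\Gamma \setminus S$ is connected, both counts equal $c(\Gamma \setminus S)$; if $\Gamma \setminus S$ is disconnected, then no spanning tree of $\Gamma$ can be contained in $E(\Gamma) \setminus S$ (it would span all of $\Gamma$ while lying in the disconnected subgraph), and both counts equal $0$.

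Given this reinterpretation, I would swap the order of summation:
$$\sum_{\substack{S \subseteq \rE(\Gamma) \\ |S| = i}} \hat{c}(\Gamma \setminus S) = \sum_{\substack{S \subseteq \rE(\Gamma) \\ |S| = i}} \#\{T : E(T) \cap S = \emptyset\} = \sum_{T} \#\{S \subseteq \rE(\Gamma) \setminus E(T) : |S| = i\},$$
where $T$ ranges over spanning trees of $\Gamma$. Each spanning tree $T$ has $|V(\Gamma)| - 1$ edges, so $|\rE(\Gamma) \setminus E(T)| = |\rE(\Gamma)| - |V(\Gamma)| + 1 = b_1(\Gamma)$. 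Hence the inner count is $\binom{b_1(\Gamma)}{i}$ independently of $T$, and the total equals $\hat{c}(\Gamma) \cdot \binom{b_1(\Gamma)}{i}$, as required.

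There is no real obstacle here; the only subtlety is checking that the identification of $\hat{c}(\Gamma \setminus S)$ with the tree-avoidance count holds in the disconnected case, which is immediate from the spanning-tree definition. The argument is essentially the standard observation that the cographic matroid is a matroid, applied to counting independent sets of fixed size.
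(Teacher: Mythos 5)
Your proof is correct and is essentially the paper's argument: both sides count pairs $(S,T)$ where $T$ is a spanning tree of $\Gamma$ and $S$ is an $i$-element edge set disjoint from $E(T)$, the paper summarizing this as "remove $i$ edges first, then pick a tree" versus "pick a tree first, then choose which $i$ of the $b_1(\Gamma)$ discarded edges were removed first." Your explicit check that $\hat{c}(\Gamma\setminus S)$ counts spanning trees of $\Gamma$ avoiding $S$ even when $\Gamma\setminus S$ is disconnected just spells out what the paper leaves implicit.
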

\begin{proof}
The LHS counts the number of ways to first remove $i$ edges from $\Gamma$, and then find a spanning tree of $\Gamma$ from what remains,
whereas the RHS counts the number of ways to first find a spanning tree of $\Gamma$, which amounts to removing some $b_1(\Gamma)$ edges, and
then decide which $i$ of those edges you removed `first'.
\end{proof}

From the above Proposition, we can compute the weight polynomial of fine compactified Jacobians of nodal curves.

\begin{corollary}\label{C:weightJ}
Same assumptions as in Proposition \ref{prop:motiveofjbar}. Then the weight polynomial of $\ov J_{\rC}(\pol)$ is equal to
\begin{equation}\label{weightcompjac}
 \mathfrak{w}\left(\ov J_{\rC}(\pol)\right) =
(1 + t)^{2 g({\rC^\nu})} t^{2  h^1(\Gamma)} c(\Gamma).
\end{equation}

\end{corollary}
\begin{proof}
This follows from Proposition \ref{prop:motiveofjbar} using that $\mathfrak{w}(\LL)=t^2$ and that $\mathfrak{w}(J_{\rC^{\nu}})=(1+t)^{2g^{\nu}(\rC)}$ because $\mathfrak{w}(J_{\rC^{\nu}})$ is an abelian variety of
dimension $g^{\nu}(\rC)$.

\end{proof}


\begin{thebibliography}{10}

\bibitem[Ab]{Ab} Ahmed Abbes, {\em R{\'e}duction semi-stable des courbes d'apr{\`e}s Artin, Deligne, Grothendieck, Mumford, Saito, Winters}, in
\underline{Courbes semi-stables et groupe fondamental en g\'eom\'etrie alg\'ebrique} 
(Birkh\"auser 2000), 59--110.

\bibitem[Ale]{A} Valery Alexeev, {\em Compactified Jacobians and Torelli map}, Publ. RIMS Kyoto Univ. {\bf 40} (2004), 1241--1265.


\bibitem[AIK]{AIK} Allen Altman, Anthony Iarrobino, and Steven Kleiman, {\em
Irreducibility of the Compactified Jacobian},  in \underline{Real and complex singularities: Proceedings
of the Ninth Nordic Summer School}, 
(Sijthoff and Noordhoff, 1977), 1--12. 


\bibitem[AK]{AK}
Allen Altman and Steven Kleiman, {\em Compactifying the Picard Scheme},
Adv. in Math. {\bf 35} (1980), 50--112.

\bibitem[AK2]{AK2} 
Allen Altman and Steven Kleiman, {\em Compactifying the Picard scheme. II}, 
Amer. J. Math., 101.1 (1979) 10--41.

\bibitem[Bea]{Bea} Arnaud Beauville, {\em Counting rational curves on K3 surfaces},  
Duke Math. J. {\bf 97}.1 (1999), 99--108.

\bibitem[Beh]{B} Kai Behrend, {\em Donaldson-Thomas invariants via microlocal
geometry}, Annals of Math. {\bf 170} (2009), 1307--1338.







\bibitem[BBD]{BBD}
Alexander 
Beilinson, Joseph Bernstein, Pierre Deligne,  {\em Faisceaux pervers},
 Ast\`erisque {\bf 100}  (1982), 5--171.


\bibitem[BLR]{BLR} Siegfried Bosch, Werner L\"utkebohmert, and Michel Raynaud, \underline{N\'eron Models},
Ergeb. der Math. und ihrer Grenz. {\bf 21} (Springer, 1990).






\bibitem[dCHM]{dCHM} Mark de Cataldo, Tam\'as Hausel, and Luca Migliorini,
{\em Topology of Hitchin systems and
Hodge theory of character varieties:
the case $A_1$}, Annals of Math. {\bf 173} (2012), 1329--1407.


\bibitem[CKS]{cks} Eduardo Cattani, Aroldo Kaplan, and Wilfried Schmid, {\em $L^2$ and intersection cohomology for a polarized variation of Hodge structure}, Invent. Math. {\bf 87} (1987), 217--252.

\bibitem[CL]{CL} Pierre-Henri Chaudouard and G\'erard Laumon, {\em Le lemme fondamental pond\'er\'e I : constructions g\'eom\'etriques},  Compos. Math. {\bf 146}.6 (2010), 1416--1506.




\bibitem[CL2]{CL2}P. H. Chaudouard, G. Laumon, {\em   Un th\'eor\`eme du support pour la fibration de Hitchin} 
 Ann. Inst Fourier. {\bf 66},2, (2016), 711--727.


\bibitem[CDP]{CDP}Wu-Yen Chuang, Duiliu-Emanuel Diaconescu, Guang Pan,
{\em BPS states and the P = W conjecture}, in \underline{Moduli spaces}, LMS Lecture Note Ser. {\bf 411} (Cambridge Univ. Press, 2014), 132--–150.

\bibitem[D'S]{DS}
Cyril D'Souza, {\em Compactification of generalised Jacobians}, 
Proc. Indian Acad. Sci. Sect. A Math. Sci.  {\bf 88}.5 (1979) 419--457.

\bibitem[Del]{de} Pierre Deligne, {\em La conjecture de Weil, II,} Publ. Math. IHES {\bf 52} (1980), 138--252.


\bibitem[DH]{DH} Steven Diaz and Joe Harris, {\em Ideals associated to
deformations of singular plane curves},  Trans. Amer. Math. Soc. {\bf 309} (1988), no. 2, 433–-468.


\bibitem[DHS]{DHS}
Duiliu-Emanuel Diaconescu, Zheng Hua, and Yan Soibelman,  {\em 
HOMFLY polynomials, stable pairs and motivic Donaldson-Thomas invariants}, 
arXiv:1202.4651. 

\bibitem[DSV]{DSV}
Duiliu-Emanuel Diaconescu,  Vivek Shende, and Cumrun Vafa,  {\em 
Large N duality, lagrangian cycles, and algebraic knots},  
Comm. Math. Phys. {\bf 319}.3 (2013), 813--863.


\bibitem[Est]{est1} Eduardo Esteves, {\em Compactifying the relative Jacobian over families of reduced curves}, Trans. Amer. Math. Soc.  {\bf 353} (2001), 3045--3095.

\bibitem[FGvS]{FGvS} Barbara Fantechi, Lothar G\"ottsche, and Duco van Straten,
{\em Euler number of the compactified Jacobian and multiplicity
of rational curves},  J. Algebraic Geom. {\bf 8}.1 (1999), 115-–-133.

\bibitem[Fle]{Fle} Hubert Flenner, {\em Ein Kriterium f\"ur die Offenheit der Versalit\"at}, Math. Z. {\bf 178}.4 (1981), 449--473. 


\bibitem[Gie]{Gie}
David Gieseker, {\em On the moduli of vector bundles on an algebraic surface}, 
Annals of Math. {\bf 106}.1 (1977) 45--60.

\bibitem[GORS]{GORS} Eugene Gorsky, Alexei Oblomkov, Jacob Rasmussen, and
Vivek Shende, {\em Torus knots and the rational DAHA},  Duke Math. J. {\bf 163}.14 (2014), 2709–--2794.


\bibitem[GS]{GS} Lothar G\"ottsche and Vivek Shende, 
{\em Refined curve counting on complex surfaces},  Geom. Topol. {\bf 18}.4 (2014), 2245--–2307.

\bibitem[GS2]{GS2} Lothar G\"ottsche and Vivek Shende, {\em The chi-y genera of relative Hilbert schemes for linear systems on Abelian and K3 surfaces}, arXiv:1307.4316. 


\bibitem[GLS]{GLS} Gert-Martin Greuel, Christoph Lossen, and Eugenii Shustin,
\underline{Introduction to Singularities and Deformations},
(Springer, 2007). 

%
%

\bibitem[GV]{GV} Rajesh Gopakumar and Cumrun Vafa,
{\em M-theory and Topological strings I \& II},
hep-th/9809187 \& hep-th/9812187 .

\bibitem[Har]{H} Robin Hartshorne, {\em Generalized Divisors on Gorenstein Schemes},
 K-Theory {\bf 8} (1994), 287--339.


\bibitem[HST]{HST} Shinobu Hosono, Masa-Hiko Saito, and Atsushi Takahashi, {\em
Relative Lefschetz action and BPS state counting},
 IMRN (2001) no. {\bf 15}, 783--816.



\bibitem[Ill]{I}Luc Illusie, {\em Autour du th\'eor\`eme de monodromie locale}, in
P\'eriodes p-adiques, Ast\'erisque {\bf 223} (1994), 9--57.




\bibitem[KKV]{KKV} Sheldon Katz, Albrecht Klemm, and Cumrun Vafa, {\em M-theory, topological strings, and spinning black holes}, Adv. Theor. Math. Phys. {\bf 3} (1999), 1445--1537. 


\bibitem[KS]{KS} Steven Kleiman and Vivek Shende, {\em On the G\"ottsche threshold}, 
in \underline{A celebration of algebraic geometry}, Clay Math. Proc 18 (2013), 429--449.

\bibitem[KST]{KST} Martijn Kool, Vivek Shende, and Richard Thomas, {\em A short proof of the G\"ottsche conjecture},
Geom. Topol. {\bf 15} (2011), 397--406.

\bibitem[KT]{KT} Martijn Kool and Richard Thomas, {\em 
Reduced classes and curve counting on surfaces I: theory}, 
Algebraic Geometry {\bf 1}.3 (2014), 334--383

\bibitem[KW]{KW} Reinhardt Kiehl and Rainer Weissauer, \underline{
Weil Conjectures, Perverse Sheaves and l-adic Fourier Transform},
Ergebnisse der Mathematik und ihrer Grenzgebiete. 3. Folge / A Series of Modern Surveys in Mathematics, Vol. 42 (Springer 2001). 


\bibitem[Liu]{Liu} Qing Liu \underline{Algebraic geometry and arithmetic curves},  (Oxford Univ. Press 2002).

\bibitem[Lau]{L-tf} G\'erard Laumon, {\em Transformation de Fourier, constantes d'equations fonctionelles et conjecture de Weil},
Pub. Math. IHES {\bf 65} (1987), 131--210.

\bibitem[Lau2]{L-jc} G\'erard Laumon, {\em
 Fibres de Springer et jacobiennes compactifi\'ees}, in
 \underline{Algebraic geometry and number theory}, Progr. Math. 253
 (Birkh\"auser, 2006), 515--563,.

 \bibitem[Mac]{mcd} Ian Macdonald,
 {\em The Poincare Polynomial of a Symmetric Product},
 Math. Proc. of the Cambridge Philos. Soc. {\bf 58} (1962), 563--568.

\bibitem[MV]{MV} Margarida Melo and Filippo Viviani, {\em Fine compactified Jacobians}, Math. Nach. {\bf 285} (2012), no. 8-9, 997--1031.

\bibitem[MRV1]{MRV1} Margarida Melo, Antonio Rapagnetta, and Filippo Viviani, {\em Fine compactified Jacobians of reduced curves}, Trans. Amer. Math. Soc. {\bf 369}.8 (2017), 5341--5402.

\bibitem[MRV2]{MRV2} Margarida Melo, Antonio Rapagnetta, and Filippo Viviani, {\em Fourier-Mukai and autoduality for compactified Jacobians I}, to appear on Crelle (available ar arxiv:1207.7233).

\bibitem[MRV3]{MRV3} Margarida Melo, Antonio Rapagnetta, and Filippo Viviani, {\em Fourier-Mukai and autoduality for compactified Jacobians II},  arXiv:1308.0564.


\bibitem[MS]{MS} Luca Migliorini and Vivek Shende, {\em A support theorem for Hilbert schemes of planar curves},  J. Eur. Math. Soc. (JEMS) {\bf 15}.6 (2013), 2353--2367.

\bibitem[MS2]{MS2} Luca Migliorini and Vivek Shende, {\em Higher discriminants and the topology of algebraic maps}, Algebraic Geometry, vol. 5, {\bf 1} (2018), 114--130.

\bibitem[Mau]{M} Davesh Maulik, {\em Stable pairs and the HOMFLY polynomial}, arxiv:1210.6323.

\bibitem[MNOP]{MNOP} Davesh Maulik, Nikita Nekrasov, Andrei Okounkov, 
and Rahul Pandharipande, 
{\em Gromov-Witten theory and Donaldson-Thomas theory, I.}, 
Compos. Math. {\bf 142}.5 (2006), 1263--1285.

\bibitem[MY]{MY} Davesh Maulik and Zhiwei Yun, {\em Macdonald formula for
curves with planar singularities},  J. Reine Angew. Math. {\bf 694} (2014), 27--–48.

\bibitem[Mil]{Mi} James Milne, \underline{\'Etale Cohomology},  
Princeton Mathematical Series {\bf 33} (Princeton University Press, 1980).


\bibitem[Ng\^o]{N0} B\^au Ch\'au Ng\^o, {\em Fibration de Hitchin et endoscopie}, 
Invent. Math. 164 (2006), no. 2, 399-453.
\bibitem[Ng\^o1]{N1} B\^au Ch\'au Ng\^o, {\em Le lemme fondamental pour les alg\`ebres de Lie}, Publ. Math. IHES {\bf 111} (2010), 1–-169.
\bibitem[Ng\^o2]{N2}B\^au Ch\'au Ng\^o,{\em Perverse sheaves and fundamental lemmas} PCMI Lectures 2015. 
\bibitem[OY]{OY} Alexei Oblomkov and Zhiwei Yun, {\em 
Geometric representations of graded and rational Cherednik algebras}, arxiv:1407.5685.

\bibitem[ObS]{ObS} Alexei Oblomkov and Vivek Shende, {\em The Hilbert scheme of a plane curve singularity and the HOMFLY polynomial of its link}, Duke Math. J.  {\bf 161} (2012), no. 7, 1277--1303.

\bibitem[ORS]{ORS} Alexei Oblomkov, Jacob Rasmussen, and Vivek Shende, {\em The Hilbert scheme of a plane curve singularity and the HOMFLY homology of its link}, arxiv:1201.2115.

\bibitem[OS]{OS} Tadao Oda and 
C. S. Seshadri, {\em Compactifications of the generalized Jacobian variety},
 Trans. Amer. Math. Soc. {\bf 253} (1979), 1--90.


\bibitem[PT]{PT}  Rahul Pandharipande and Richard Thomas, {\em Curve counting via stable pairs in the derived category}, Invent. Math. {\bf 178}.2 (2009), 407--447.

\bibitem[PT3]{PT3}  Rahul Pandharipande and Richard Thomas,
{\em Stable pairs and BPS invariants}, J. Amer. Math. Soc. {\bf 23} (2010), 267--297.

\bibitem[Ran]{Ran} Ziv Ran, {\em A note on Hilbert schemes of nodal curves}, J. Algebra {\bf 292}.2 (2005), 429--446.

\bibitem[Ren]{R} Jorgen Rennemo, {\em Homology of Hilbert schemes of a locally planar curve}, arxiv:1308.4104.

\bibitem[Sai1]{saito1} Morihiko Saito, {\em Modules de Hodge polarisables}, Publ. Res. Inst. Math. Sci.
{\bf 24}.6 (1989), 849--995.

\bibitem[Sai2]{saito2} Morihiko Saito, {\em Mixed Hodge modules,}
 Publ. Res. Inst. Math. Sci. {\bf 26}.2 (1990), 221--333.


\bibitem[Ser]{Ser} Edoardo Sernesi, \underline{Deformations of algebraic schemes}, 
Grundlehren der Mathematischen
Wissenschaften {\bf 334} (Springer-Verlag, 2006).

\bibitem[Ses]{Ses} 
C. S. Seshadri, {\em Fibr\'es vectoriels sur les courbes alg\'ebriques}, Ast\'erisque {\bf 96} (1982).

\bibitem[Sh]{S} Vivek Shende, {\em Hilbert schemes of points on a locally planar
curve and the Severi strata of its versal deformation}, Comp. Math. {\bf 148}.2 (2012) 531--547.

\bibitem[Sim]{Sim} 
Carlos Simpson, {\em Moduli of representations of the fundamental group of a smooth
projective variety I}, Inst. Hautes \'Etudes Sci. Publ. Math. {\bf 79} (1994) 47--129.

\bibitem[Tes]{T} Bernard Teissier, {\em  R\'esolution simultan\'ee - I. Famille de courbes}, in
  \underline{S\'eminaire sur les singularit\'es des surfaces},
  Springer LNM {\bf 777} (Springer, 1980).
  
\bibitem[W]{W} Thorsten Warmt, {\em Gorenstein-Dualit\"at und topologische Invarianten von Singularit\"aten}, Dissertation, Mainz 2002.

\bibitem[YZ]{YZ} Shing-Tung Yau and Eric Zaslow, 
{\em BPS states, string duality, and nodal curves on K3},
 Nuclear Phys. B {\bf 471}.3 (1996) 503--–512.  


\end{thebibliography}
\end{document}